\newtheorem{theo}{Theorem}[section]
\newtheorem{defin}[theo]{Definition}
\newtheorem{lemm}[theo]{Lemma}
\newtheorem{rem}[theo]{Remark}
\numberwithin{equation}{section}
\newcommand{\al}{\alpha}
\newcommand{\be}{\beta}
\newcommand{\Ga}{\Gamma}
\newcommand{\om}{\omega}
\newcommand{\te}{\theta}
\newcommand{\De}{\Delta}
\newcommand{\pa}{\partial}
\newcommand{\R}{{\mathbb R}^n}
\newcommand{\Rn}{{\mathbb R}^{n-1}}
\newcommand{\na}{\nabla}
\begin{document}
\baselineskip=18pt

\title[]{Solvability of the Initial-Boundary value problem of the    Navier-Stokes equations  %in anisotropic
%Besov spaces
with rough data}

\
\author{Tongkeun Chang}
\address{Department of Mathematics, Yonsei University \\
Seoul, 136-701, South Korea}
\email{chang7357@yonsei.ac.kr}

\author{Bum Ja Jin}
\address{Department of Mathematics, Mokpo National University, Muan-gun 534-729,  South Korea }
\email{bumjajin@hanmail.net}

\thanks{This research was supported by Basic Science Research Program
        through the National Research Foundation of Korea(NRF) funded
        by the Ministry of Science, ICT and future planning(NRF-2014R1A1A3A04049515).}

\begin{abstract}
In this  paper, we study the initial and boundary value problem of the Navier-Stokes equations in the half space.
We prove the  unique existence of weak solution $u\in L^q(\R_+\times (0,T))$  with $\nabla u\in L^{\frac{q}{2}}_{loc}(\R_+\times (0,T))$ for a short time interval
when the  initial data $h\in  {B}_q^{-\frac{2}{q}}(\R_+)$
and the boundary data
 $ g\in L^q(0,T;B^{-\frac{1}{q}}_q(\Rn))+L^q(\Rn;B^{-\frac{1}{2q}}_q(0,T)) $  with normal component $g_n\in L^q(0,T;\dot{B}^{-\frac{1}{q}}_q(\Rn))$,  $n+2<q<\infty$ are given.

\noindent
 2000  {\em Mathematics Subject Classification:}  primary 35K61, secondary 76D07. \\

\noindent {\it Keywords and phrases: Stokes equations, Navier-Stokes
equations, nonhomogeneous initial data, nonhomogeneous  boundary data, weak solutions, Half space. }

\end{abstract}

\maketitle

\section{\bf Introduction}
\setcounter{equation}{0}

Let $\R_+ = \{ x \in \R \, | \, x_n > 0 \}$, $n\geq 2$ and $0 < T < \infty$.
Let us  consider the nonstationary Navier-Stokes equations
\begin{align}\label{maineq2}
\begin{array}{l}\vspace{2mm}
u_t - \De u + \na p =f-\mbox{div}(u\otimes u), \qquad div \, u =0, \mbox{ in }
 \R_+\times (0,T),\\
\hspace{30mm}u|_{t=0}= h, \qquad  u|_{x_n =0} = g.
\end{array}
\end{align}

There are abundant literatures for the study of the Navier-Stokes equations with homogeneous boundary data.
See \cite{amann,cannone,kozono1,sol1} and references therein for   the half space problem.
%  with   homogeneous boundary data, that is, with $g=0$.
See also \cite{amann,fabes,giga2,giga3,giga1,iftimie,kato1,kato2, kato,kozono2,lemarie} and the references therein for the problems in other domains such as whole space, a bounded domain, or exterior domain.
%(with homogeneous or nonhomogeneous boundary data)..

Over the past decade, the Navier-Stokes equations with  the nonhomogeneous boundary data have been studied actively.
See \cite{fernandes,amann1,amann2,lewis,voss}  and references therein for the  half space problem. % with  the nonhomogeneous boundary data, that is,  with $g\neq 0$.
%
%There are  many  literatures  for the study of the
% Navier-Stokes equations in other domain such as whole space, a bounded domain, or exterior domain (with homogeneous or nonhomogeneous boundary data).
See also \cite{amann1, amann2,farwig4,farwig6,farwig2,farwig3,grubb1,  grubb3, grubb,kozono2} and the references therein for the problems in other domains such as whole space, a bounded domain, or exterior domain.

In \cite{grubb1,grubb3,grubb,sol1}, the solvabilities of bounded or exterior domain problem  have been studied for a  boundary data in anisotropic space $B^{\al-\frac{1}{q}, \frac{\al}{2}-\frac{1}{2q}}_{q0}(\partial \Omega \times (0,T))$, $\al>\frac{1}{q}$ (with $q>\frac{n+2}{\al+1}$), where $g\in B^{s,\frac{s}{2}}_{q0}(S\times (0,T))$ means the zero extension of $g$ to $S\times (-\infty,T)$ is in $B^{s,\frac{s}{2}}_{q}(S\times (-\infty,T))$. On the other hand, in \cite{fernandes, amann1, amann2,farwig4,farwig6, farwig2, farwig3,lewis,voss} a rough boundary data have been considered.
%V.A. Solonnikov\cite{sol1} showed the existence   of solution $u\in W^{2,1}_q({\mathbb R}^3_+\times (0,T)), q>\frac{5}{3}
%$  %for the homogeneous boundary data, that is,
% for the data small enough when  $h\in \mbox{\r{B}}^{2-\frac{2}{q}}_q(\R_+)$ and $g=0$.
%Using pseudo-differential operator techniques  G. Grubb\cite{grubb3} showed  the unique existence of solution $u\in B^{\al,\frac{\al}{2}}_{q}(\Omega \times (0,T))$, $\infty>q>\frac{n+2}{\al+1}$ with $\al q>1$ in the interior or exterior domains for the data small enough when  nonzero  initial data in $\mbox{\r{B}}^{\al-\frac{2}{q}}_q(\Omega)$ and nonzero  boundary data in $B^{\al-\frac{1}{q},\frac{\al}{2}-\frac{1}{2q}}_{q0}(\partial \Omega \times (0,T))$ %with zero normal component
%are given (The normal component of the boundary data considered in \cite{grubb3} is zero. See also \cite{grubb1,grubb}).
H. Amann\cite{amann1} showed unique maximal %$L_r(H^{\frac{1}{r}}_q)$
solution  $u\in L^r_{loc}(0,T^*, H^{\frac{1}{r}}_q(\Omega))$, $3<q<r<\infty, \frac{1}{r}+\frac{3}{r}\leq 1$ for some maximal time $T^*$ in any domain in ${\mathbb R}^3$ with nonempty compact smooth boundary when a nonzero initial data in $B^{-\frac{1}{r}}_q(\Omega)\cap L^q_{\sigma}(\Omega)$ and  nonzero boundary data in $L^r_{loc}({\mathbb R}_+;W^{-\frac{1}{q}+\frac{1}{r}}_q(\partial \Omega))$ are given.
%More precisely, in \cite{amann1}
 %(In particular, an initial data in ${\mathbb B}^{-\frac{1}{r}}_{q,r}$ and a boundary data in $L_{r,loc}(R^+,W^{-\frac{1}{q}+\frac{1}{r}}_q(\Gamma))$  for $3<q<r<\infty$, $\frac{1}{r}+\frac{3}{q}\leq 1$  has been considered. See \cite{amann1} for the definition of $L_r(H^{\frac{1}{r}}_q)$, ${\mathbb B}^{-\frac{1}{r}}_{q,r}$ and $L_{r,loc}(R^+,W^{-\frac{1}{q}+\frac{1}{r}}_q(\Gamma)$).
%
 J.E.Lewis\cite{lewis} showed a global in time existence of solution in $L^p({\mathbb R}_+;L^q(\R_+))$ for small data $h\in L^{r_1}(\R_+)\cap L^{r_2}(\R_+)$ and $g\in L^d({\mathbb R}_+;L^r(\R_+))$ with $r_1,r_2,p,q,r,d<\infty$, $r_1<n<r_2, \frac{n-1}{r}+\frac{2}{d}=1, $ and $\frac{2}{q}+\frac{2}{p}=1$.
 K.A.Voss\cite{voss} showed the existence of a global in time self-similar solution for small data  $h\in \dot{B}^{-\frac{1}{2}}_{6,\infty}({\mathbb R}^3_+)\cap  \dot{B}^{-\frac{1}{4}}_{4,\infty}({\mathbb R}^3_+)$ and $t^{\frac{1}{3}}g(t)\in L^\infty({\mathbb R}_+;L^3({\mathbb R}^2))$ with $g_n=0$.
   M.Fernandes de Almeida  and L.C.F. Ferreira\cite{fernandes} showed the existence of global in time solution in the framework of Morrey space for a small data
   $h\in {\mathcal M}_{p,n-p}(\R_+)$, $t^{\frac{1}{2}-\frac{p-1}{2r}}g\in BC({\mathbb R}_+,{\mathcal M}_{r,n-p}(\Rn))$ and $t^{\frac{1}{2}-\frac{p}{2q}} g_n \in BC({\mathbb R}_+,{\mathcal M}_{\frac{(p-1)q}{p},n-p}(\Rn)),$ $2<p,q<\infty$, $1<r<\infty$.%, $\al=\frac{1}{2}-\frac{p}{2q}, \be=\frac{1}{2}-\frac{p-1}{2r}$.

In particular, R. Farwig, H. Kozono and H. Sohr\cite{farwig2}   showed the local in time existence of a very weak solution $u\in L^s(0,T;L^q(\Omega))$ in an exterior domain when nonzero  initial in $B^{-\frac{2}{s}}_{q,s}$ and nonzero boundary data  in $L^s(0,T;W^{-\frac{1}{q}}_q(\partial\Omega))$ for  $\frac{2}{s}+\frac{3}{q}=1,\, 2<s<\infty, \, 3<q<\infty$ are given (Precisely speaking, in \cite{farwig2} a nonzero divergence is considered).
 % (See also  \cite{ farwig3,farwig4, farwig6}).
 %, where initial and boundary data with positive differentiability have been given.

In this paper, we show the unique existence of     $u\in L^q(\R_+\times (0,T))$ with $\nabla u\in L^{\frac{q}{2}}(\R_+\times (0,T))$ for   the Navier-Stokes equations  \eqref{maineq2} for a small time interval $(0,T)$ with the  initial  $h\in B^{-\frac{2}{q}}_q(\R_+)$ and  the  boundary  data $g\in L^q(0,T;B^{-\frac{1}{q}}_q(\Rn))+L^q(\Rn;B^{-\frac{1}{2q}}_q(0,T))
$ with $g_n\in L^q(\Rn;B^{-\frac{1}{2q}}_q(0,T))$, $q>n+2$.
Our result could be compared  with the one in  \cite{farwig2}. The case $q=r=5$ in \cite{farwig2} coincides with the case $q=5$ in our result, except the fact that
   our result cover larger class for $g'$ (the  tangential component of the boundary data) since $ L^q(0,T;B^{-\frac{1}{q}}_q(\Rn))+L^q(\Rn;B^{-\frac{1}{2q}}_q(0,T)) \supsetneqq L^q(\Rn;B^{-\frac{1}{2q}}_q(0,T))$.
%the nonstationary Navier-Stokes equations with nonhomogeneous boundary data of low  or weak differentiability have been studied in several papers,
% we are interested in finding optimal function space for the data.
%%This is the reason that we have adopted anisotropic  Besov spaces for the solution spaces. % (
%% Anisotropic Besov spaces are considered  in  \cite{sol1} for  the half space problem,  in  \cite{iftimie}  for the  Cauchy problem, and  in \cite{grubb1, grubb3, grubb} for the interior or exterior domain problems).

The following  is the main result of this paper.
\begin{theo}
\label{thm3}
Let  $\infty>q>n+2$.
Assume that  $h\in {B}_q^{-\frac{2}{q}}(\R_+)$ with $ \mbox{div}\, h=0$, % if $\al>\frac{2}{q}$ and $h\in \dot{B}_q^{\al-\frac{2}{q}}(\R_+)\cap \dot{B}_q^{-\frac{2}{q}}(\R_+)$ if $\al\leq\frac{2}{q}$,
%\cap \dot{B}^{-\frac{2}{q}}_{q}(\R_+)
$
 g\in L^q(0,T;B^{-\frac{1}{q}}_q(\Rn))+L^q(\Rn;B^{-\frac{1}{2q}}_q({\mathbb R}_+))$
 %if $\al>\frac{1}{q}$  and $                                                       %%\cap \dot B_q^{-\frac{2}{q}}(\R_+)
% g', R'g_n\in \dot{B}_q^{\al-\frac{1}{q},\frac{\al}{2}-\frac{1}{2q}}(\Rn\times (0,T))%$\cap \dot{B}_q^{-\frac{1}{q},-\frac{1}{2q}}(\Rn\times (0,T)) $ if $\al\leq \frac{1}{q}$,                                                             and the   boundary data is in
 %\cap \dot B^{-\frac{1}{q},\frac12 \al -\frac{1}{2q}}_q(\Rn\times (0,T)),$           $ \dot{B}_q^{0,0}(\Rn\times (0,T)) $  for  $n+1<q<\infty$).
 with  $g_n\in    L^q({\mathbb R}_+;\dot{B}^{-\frac{1}{q}}_q(\Rn))$.
     Then there is $T^*(0<T^*<\infty) $ so that    the Navier-Stokes equations
\eqref{maineq2}  has a  unique   weak solution
$u\in L^q({\mathbb R}^n_+\times (0,T^*))$  with $\nabla^{} u\in L^{\frac{q}{2}}_{loc}(\R_+\times (0,T^*))$.
%
%Here $B^{s,\frac{s}{2}}_{q0}(\Rn\times {\mathbb R}_+)$ is the set of distribution whose zero extension to $\Rn\times {\mathbb R}$ is in $B^{s,\frac{s}{2}}_{q}(\Rn\times {\mathbb R})$.
\end{theo}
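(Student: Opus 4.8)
The plan is to recast \eqref{maineq2} as an integral equation and to solve it by a contraction argument on a short time interval. Write $u=v-B(u,u)$, where $v$ is the solution of the \emph{linear} Stokes system with the prescribed initial data $h$ and boundary data $g$, and
\[
B(u,w)(t)=\int_0^t \ma(t-s)\,\mathbb{P}\,\mathrm{div}\,(u\otimes w)(s)\,ds
\]
is the Duhamel term built from the Dirichlet--Stokes semigroup $\ma(t)$ and the Helmholtz projection $\mathbb{P}$. Since $u\in L^q$ gives $u\otimes u\in L^{q/2}$, the distribution $\mathrm{div}\,(u\otimes u)$ is well defined, and any $u\in L^q(\R_+\times(0,T^*))$ solving this identity is the desired weak solution. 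First I would invoke the linear solvability results proved earlier to control the free evolution: writing $X_T=L^q(0,T;B_q^{-1/q}(\Rn))+L^q(\Rn;B_q^{-1/2q}({\mathbb R}_+))$ and $Y_T=L^q({\mathbb R}_+;\dot B_q^{-1/q}(\Rn))$ for the boundary-data spaces of the theorem,
\[
\|v\|_{L^q(\R_+\times(0,T))}\le C\big(\|h\|_{B_q^{-2/q}(\R_+)}+\|g\|_{X_T}+\|g_n\|_{Y_T}\big).
\]
For the initial part this is essentially the heat-semigroup characterization of $B_q^{-2/q}$, which states precisely that $\ma(t)h\in L^q(\R_+\times{\mathbb R}_+)$; the boundary part is the content of the linear theory.

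The heart of the matter is the bilinear estimate. Using the kernel bounds for $\ma(t)\mathbb{P}\,\mathrm{div}$ from the linear analysis, one has the $L^{q/2}\!\to\! L^q$ smoothing
\[
\|\ma(t)\,\mathbb{P}\,\mathrm{div}\,F\|_{L^q(\R_+)}\le C\,t^{-\frac12-\frac{n}{2q}}\,\|F\|_{L^{q/2}(\R_+)}.
\]
Taking $F=u\otimes w$, the map $s\mapsto\|F(s)\|_{L^{q/2}_x}\le\|u(s)\|_{L^q_x}\|w(s)\|_{L^q_x}$ lies in $L^{q/2}(0,T)$, and the time integral defining $B(u,w)$ is a convolution against $k(t)=t^{-\be}$ with $\be=\tfrac12+\tfrac{n}{2q}$. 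The exponent $a=\tfrac{q}{q-1}$ satisfies $a\be=\tfrac{q+n}{2(q-1)}<1$ precisely when $q>n+2$, so $k\in L^a(0,T)$ with $\|k\|_{L^a(0,T)}\le C\,T^{\frac1a-\be}=C\,T^{\frac{q-n-2}{2q}}$; Young's convolution inequality (note $\tfrac1q+1=\tfrac1a+\tfrac2q$) then yields
\[
\|B(u,w)\|_{L^q(\R_+\times(0,T))}\le C\,T^{\frac{q-n-2}{2q}}\,\|u\|_{L^q}\,\|w\|_{L^q}.
\]
This is exactly where the hypothesis $q>n+2$ enters: it makes the temporal power positive, turning the nonlinearity into a small perturbation over short times.

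With these two estimates I would run the standard contraction. On the ball $\{\,\|u\|_{L^q(\R_+\times(0,T^*))}\le R\,\}$ the map $\Phi(u)=v-B(u,u)$ obeys $\|\Phi(u)\|\le C\|v\|+C\,(T^*)^{\frac{q-n-2}{2q}}R^2$ and $\|\Phi(u)-\Phi(\ti u)\|\le C\,(T^*)^{\frac{q-n-2}{2q}}R\,\|u-\ti u\|$; choosing $R$ comparable to the linear bound and then $T^*$ small makes $\Phi$ a contraction of the ball into itself, so the Banach fixed point theorem gives a unique $u$, and the same bilinear bound upgrades this to uniqueness in the whole class on $[0,T^*]$. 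The local gradient bound $\na u\in L^{q/2}_{loc}(\R_+\times(0,T^*))$ then follows from interior and boundary parabolic regularity for the Stokes system with right-hand side $\mathrm{div}\,(u\otimes u)$, $u\otimes u\in L^{q/2}$: the kernel $\na\ma(t)\mathbb{P}\,\mathrm{div}$ decays only like $t^{-1}$, which is borderline non-integrable in time, so the gradient estimate survives only away from $t=0$ and locally, consistent with the stated $L^{q/2}_{loc}$ conclusion.

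I expect the main obstacle to lie in the half-space linear theory underpinning both displayed kernel estimates. Unlike the whole-space case, $\ma(t)$ is not a pure convolution: the pressure and the Helmholtz projection are nonlocal, and the boundary produces additional terms, so proving the clean decay $t^{-1/2-n/(2q)}$ for $\ma(t)\mathbb{P}\,\mathrm{div}$ together with the correct mapping properties for the rough anisotropic boundary data---in particular the homogeneous Besov condition imposed on the normal component $g_n$---is the delicate part. Granted those linear estimates, the nonlinear step above is the comparatively soft contraction argument.
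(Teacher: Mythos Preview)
Your proposal is correct and follows essentially the same strategy as the paper: control the linear Stokes solution with data $(h,g)$ by Theorem~\ref{thm-stokes}, prove a bilinear estimate with the positive power $T^{(q-n-2)/(2q)}$ (this is exactly the paper's $T^{\al_1/2}$ with $p=q/2$), and close by a fixed-point argument for small $T^*$. The only cosmetic differences are that the paper runs the Picard iteration $u^{m+1}=v-B(u^m,u^m)$ and shows the sequence is Cauchy rather than citing Banach's theorem, and it obtains the bilinear bound by applying Theorem~\ref{thm-stokes} directly to the forced Stokes problem with ${\mathcal F}=u\otimes u\in L^{q/2}$ (via the explicit decomposition $u=v+V+\nabla\phi+w$) rather than through a pointwise-in-time $L^{q/2}\to L^q$ semigroup decay plus Young's convolution inequality; the gradient regularity $\nabla u\in L^{q/2}_{loc}$ is likewise read off from the kernel estimates for each piece of that decomposition.
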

The space $L^q(0,T;B^{-\frac{1}{q}}_q(\Rn))+L^q(\Rn;B^{-\frac{1}{2q}}_q(0,T))$ coincides with  anisotropic Besov space ${B}_q^{-\frac{1}{q},-\frac{1}{2q}}(\Rn\times {\mathbb R}_+)$ (see section \ref{notation}).
%Theorem \ref{thm3}  generalizes the result in  \cite{grubb3,sol1} to nonhomogeneous boundary data
% in $ B_q^{\al-\frac{1}{q},\frac{\al}{2}-\frac{1}{2q}}(\Rn\times {\mathbb R}_+)$, $1\geq \al\geq  0$.
%%We note that the range  $\infty> q>\frac{n+2}{\al+1}$ in Theorem \ref{thm3} coincides with  the range $q>\frac{5}{3}$ when $\al=2$ and $n=3$.
Our result is optimal in the sense that the spaces for the   initial and the boundary data cannot be enlarged for our solution class.
%We emphasize that  t
%Note that the initial data and (the tangential component of ) the  boundary data considered in Theorem \ref{thm3} for $0\leq \al<\frac{1}{q}$  are not a function but a distribution. %, although normal component has negative differentiability in space  variables only.
Our arguments in this paper are based on the elementary estimates of the heat operator and the  Laplace operator. The solution representation in section \ref{decomposition} could be useful to study asymptotic behavior of the solution.

%????????????????????

Before proving Theorem \ref{thm3},  we have  studied the initial and boundary value problem of the Stokes equations in $\R_+\times (0,T)$ as follows:
\begin{align}\label{maineq-stokes}
\begin{array}{l}\vspace{2mm}
u_t - \De u + \na p =f, \qquad div \, u =0, \mbox{ in }
 \R_+\times (0,T),\\
\hspace{30mm}u|_{t=0}= h, \qquad  u|_{x_n =0} = g.
\end{array}
\end{align}

%Here ${\mathcal F}|_{t=0}\in B^{\be-\frac{2}{p}}_{p}(\R_+)$ in generalized sense means that
%\[
%t\Gamma_t*
%.\]

There are various literatures for the solvability of the  Stokes equations \eqref{maineq-stokes}
with homogeneous or nonhomogeneous boundary data. See \cite{cannone,giga,giga1,kato,koch,KS2,kozono1,sol1,Sol-2}, and references therein for the Stokes problem with homogeneous boundary data. See \cite{grubb1,  grubb3,grubb,KS1,KS2,raymond1,sol1,Sol-1}, and references therein for the Stokes problem with nonhomogeneous boundary data.

In \cite{ grubb1,  grubb3,grubb,KS1,KS2,sol1,Sol-1}, a boundary data in anisotropic space $B^{\al-\frac{1}{q},\frac{\al}{2}-\frac{1}{2q}}_{q0}(\partial\Omega\times {\mathbb R}_+)$, $\al>\frac{1}{q}$ has been considered.
 J.P. Raymond\cite{raymond1} showed the unique existence of weak solution $u\in B_2^{s,\frac{s}{2}}(\Omega \times (0,T))$, $0\leq s\leq 2$ in  a bounded domain when a nonzero initial data in $L^2(\Omega)$ and nonzero boundary data in $H^1(0,T;H^{-1}_2(S))$ are given. % (see \cite{raymond1} for the definition of $V^{s,\frac{s}{2}}(\Omega \times (0,T))$.
In \cite{farwig2}, R. Farwig, H. Kozono and H. Sohr also   showed the  existence of a very weak solution $u\in L^s(0,T;L^q(\Omega))$ (of Stokes equations) in an exterior domain when nonzero  initial in $B^{-\frac{2}{s}}_{q,s}$ and  nonzero boundary data  in $L^s(0,T;W^{-\frac{1}{q}}_q(\partial\Omega))$ for  $1<s<\infty, 3<q<\infty$ are given.

The following states  our result on  the unique solvability of the Stokes equations \eqref{maineq-stokes}.
\begin{theo}
\label{thm-stokes}
Let  $1<q<\infty$    Assume that    $h\in {B}_q^{-\frac{2}{q}}(\R_+)$ with $\mbox{div}h=0$, % if $\al>\frac{2}{q}$ and $h\in \dot{B}_q^{\al-\frac{2}{q}}(\R_+)\cap \dot{B}_q^{-\frac{2}{q}}(\R_+)$ if $\al\leq\frac{2}{q}$,
%\cap \dot{B}^{-\frac{2}{q}}_{q}(\R_+)
and   $
 g\in {B}_q^{-\frac{1}{q},-\frac{1}{2q}}(\Rn\times {\mathbb R}_+)$.
 In addition,  if $1<q\leq 3$, then we assume that $g-\Gamma*_{x}\tilde{h}\in B^{-\frac{1}{q}}_{q0}(\Rn \times {\mathbb R}_+)$ for some $\tilde{h}\in B^{-\frac{2}{q}}_q(\R)$ which is an solenoidal extension of $h$ to $\R$.
    Let $f=\mbox{div }{\mathcal F},\ {\mathcal F}\in L^p(\R\times{\mathbb R}_+)$ for some $p$  with
    $\al_1=1-(n+2)(\frac{1}{p}-\frac{1}{q})>0$. Then there is a unique weak solution $u\in  L^q({\mathbb R}^n_+\times (0,T) ) $ %with $\nabla u\in L^p_{loc}(\R_+\times (0,T))$
 with $\nabla u\in L^{p}_{loc}(\R_+\times (0,T))$
%($u\in L^{q}({\mathbb R}^n_+\times (0,T))$ for $\al=0$)
%satisfying  the Stokes equations \eqref{maineq-stokes} together with appropriate distribution $p$.
%Set $A^{\al,\frac\al 2 }_{q}({\mathbb R}^n_+\times (0,T)=B^{\al,\frac\al 2 }_{q}({\mathbb R}^n_+\times (0,T)$ if $0<\al<1$ and $A^{\al,\frac\al 2 }_{q}({\mathbb R}^n_+\times (0,T)=L^q({\mathbb R}^n_+\times (0,T)$ if $\al=0$.
satisfying   the following inequality
\begin{align*}
\| u\|_{L^{q}({\mathbb R}^n_+\times (0,T))}%\mbox{ if }0<\al<2\\
 &\leq c\max\{1,T^{\frac{1}{q}}\}\|h\|_{ {B}^{-\frac{2}{q}}_{q}({\mathbb
R}^{n}_+)}
+\max\{1,T^{\frac{1}{2q}}\}\|g\|_{   B^{-\frac{1}{q},-\frac{1}{2q}}_{q}({\mathbb
R}^{n-1} \times (0,T))}
%+c\|R'g_n\|_{  \dot B^{-\frac{1}{q},-\frac{1}{2q}}_{q}({\mathbb
%R}^{n-1} \times (0,T))}
\\
&\quad
   +\|g_n\|_{ L^q(0,T;\dot{B}^{-\frac{1}{q}}_q(\Rn))}
+cT^{\frac{\al_1}{2}}\|{\mathcal F}\|_{L^p(\R \times (0,T))}, \,\,q>3,
\end{align*}
and
\begin{align*}
\| u\|_{L^{q}({\mathbb R}^n_+\times (0,T))}%\mbox{ if }0<\al<2\\
 &\leq c\max\{1,T^{\frac{1}{q}}\}\|h\|_{ {B}^{-\frac{2}{q}}_{q}({\mathbb
R}^{n}_+)}
+\max\{1,T^{\frac{1}{2q}}\}\|g-\Gamma*_{x}\tilde{h}\|_{   B^{-\frac{1}{q},-\frac{1}{2q}}_{q0}({\mathbb
R}^{n-1} \times (0,T))}
%+c\|R'g_n\|_{  \dot B^{-\frac{1}{q},-\frac{1}{2q}}_{q}({\mathbb
%R}^{n-1} \times (0,T))}
\\
&\quad
   +\|g_n\|_{ L^q(0,T;\dot{B}^{-\frac{1}{q}}_q(\Rn))}
+cT^{\frac{\al_1}{2}}\|{\mathcal F}\|_{L^p(\R \times (0,T))}, \,\, 1<q\leq 3.
\end{align*}

%g-\Gamma*_{x}\tilde{h}

%\[
%+\Big(+c
%\max\{1,T^{\frac{-\al+\be}{2}}\}T^{\frac{1}{2}-\frac{n}{2}(\frac{1}{p}-\frac{1}{q})+\frac{1}{q}}\|\mathcal F|_{t=0}\|_{  B^{\be-\frac2 p}_{p}({\mathbb
%R}^{n}_+)} \mbox{ if }\be>\frac{2}{p}\Big).
%\]

%
%Here $\tilde{B}^{s}_q(\R_+)$ is the set of distribution whose zero extension to $\R$ is in ${B}^{s}_q(\R_+).$
\end{theo}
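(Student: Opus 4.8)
The plan is to exploit the linearity of the Stokes system and decompose $u=v+w+z$, where $v$ absorbs the initial data $h$, $w$ the (reduced) boundary data, and $z$ the force $f=\mbox{div}\,{\mathcal F}$, estimating the three pieces separately in $L^q(\R_+\times(0,T))$. First I would dispose of the initial data: take a solenoidal extension $\ti h\in B^{-\frac{2}{q}}_q(\R)$ of $h$ (its existence being the only structural fact I need from $\mbox{div}\,h=0$) and set $v=\Ga*_x\ti h$, the heat extension. Since $\ti h$ is divergence free, so is $v$, and $v$ solves $v_t-\De v=0$ with zero pressure in the whole space; hence $u-v$ solves the half-space Stokes problem with \emph{zero} initial data, boundary value $g-\Ga*_x\ti h|_{x_n=0}$, and force $f$. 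The first term in each displayed estimate then follows from the heat-semigroup characterization of the Besov norm: with $s=\frac{2}{q}$ the weight $t^{\frac{s}{2}q-1}=1$, so that
\begin{align*}
\|v\|_{L^q(\R\times(0,\infty))}^q=\int_0^\infty\|\Ga(\cdot,t)*\ti h\|_{L^q(\R)}^q\,dt\sim\|\ti h\|_{B^{-\frac{2}{q}}_q(\R)}^q,
\end{align*}
and restricting to $(0,T)$ produces the harmless factor $\max\{1,T^{\frac1q}\}$.

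The heart of the argument is the boundary-data piece $w$, solving Stokes in $\R_+\times(0,T)$ with zero initial data, zero force, and boundary value $g$ (respectively $g-\Ga*_x\ti h$ when $1<q\le3$). Here I would invoke the explicit solution representation constructed in Section \ref{decomposition}, which expresses $w$ through the Poisson-type boundary kernels of the nonstationary Stokes problem and separates the tangential components $g'$ from the normal component $g_n$. The tangential part is estimated in the anisotropic space $B^{-\frac1q,-\frac1{2q}}_q(\Rn\times{\mathbb R}_+)$, whose two indices $-\frac1q$ (space) and $-\frac1{2q}$ (time) are dictated by parabolic scaling, the time-localization giving $\max\{1,T^{\frac1{2q}}\}$. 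The normal component interacts with incompressibility through the pressure and must be controlled separately, which is the origin of the extra term $\|g_n\|_{L^q(0,T;\dot B^{-\frac1q}_q(\Rn))}$. The case split is forced precisely here: for $q>3$ the trace $\Ga*_x\ti h|_{x_n=0}$ is already compatible with the solution class, whereas for $1<q\le3$ the boundary kernel is not bounded on the full anisotropic space and one must pass to the zero-extension space $B^{-\frac1q}_{q0}$, which is why one subtracts the heat trace and assumes $g-\Ga*_x\ti h\in B^{-\frac1q}_{q0}(\Rn\times{\mathbb R}_+)$. I expect the mapping properties of these boundary potentials in the anisotropic (and homogeneous, for $g_n$) Besov norms to be the main technical obstacle.

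Finally, the force piece $z$ is handled by Duhamel's principle against the half-space Stokes Green tensor: the divergence structure $f=\mbox{div}\,{\mathcal F}$ lets one derivative fall on the kernel, and the $L^p\to L^q$ smoothing of the parabolic kernel, combined with the scaling exponent $\al_1=1-(n+2)(\frac1p-\frac1q)>0$, integrates in time to the gain $T^{\frac{\al_1}{2}}$. The interior bound $\na u\in L^{p}_{loc}(\R_+\times(0,T))$ comes from the same kernel estimates applied away from $\{t=0\}$ and $\{x_n=0\}$, where the roughness of the data prevents integrability up to the closure. For uniqueness I would use linearity and show that a solution in the class $L^q$ with $L^p_{loc}$ gradient and vanishing data must be zero, arguing by duality against the backward Stokes problem with smooth compactly supported test data and matching it to the constructed representation. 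Summing the three bounds over the decomposition then yields the two displayed inequalities.
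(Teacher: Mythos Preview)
Your plan is essentially the paper's own strategy, with one organizational difference worth flagging. The paper uses a \emph{four}-piece decomposition $u=v+V+\nabla\phi+w$ rather than your three-piece $v+w+z$: the force is absorbed not by a half-space Stokes Duhamel integral but by the whole-space heat flow $V$ applied to the Leray-projected extension $\mathbb P\tilde f$ (so $\mbox{div}\,V=0$ automatically), and the nonzero trace $V|_{x_n=0}$ is then fed back into the boundary piece. More importantly, the mechanism you allude to for the normal component $g_n$ is made explicit as the harmonic gradient correction $\nabla\phi$, with $\phi$ the double-layer potential of $g_n-v_n|_{x_n=0}-V_n|_{x_n=0}$; this is exactly what produces the term $\|g_n\|_{L^q(0,T;\dot B^{-\frac1q}_q(\Rn))}$ via the Poisson operator bound $\dot B^{-\frac1q}_q\to L^q$, and it reduces the remaining boundary problem to one with $G_n=0$, which is precisely the hypothesis of Theorem~\ref{Rn-1}. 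Your route through the half-space Stokes Green tensor for $z$ would also work, but the paper's reduction to heat-kernel and Poisson-kernel estimates is more elementary.

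One small correction on the case split: it is not that the boundary kernel fails to be bounded for $1<q\le3$. Theorem~\ref{Rn-1} always requires the boundary datum in the \emph{zero-extension} space $B^{-\frac1q,-\frac1{2q}}_{q0}$; the point is that the heat trace $v|_{x_n=0}=\Ga*_x\tilde h|_{x_n=0}$ lands only in $B^{-\frac1q,-\frac1{2q}}_{q}$ (Lemma~\ref{proheat1}), and the identification $B^{-\frac1q,-\frac1{2q}}_{q}=B^{-\frac1q,-\frac1{2q}}_{q0}$ holds precisely when $q>3$. For $q\le3$ one therefore needs the compatibility hypothesis $g-\Ga*_x\tilde h|_{x_n=0}\in B^{-\frac1q,-\frac1{2q}}_{q0}$ to place $G'$ in the right space. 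Otherwise your outline of the $v$-estimate, the $T^{\al_1/2}$ gain for the force, the local gradient bound away from $\{t=0\}\cup\{x_n=0\}$, and uniqueness by duality against the backward Stokes problem all match the paper.
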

%Theorem \ref{thm-stokes} generalizes the result of  \cite{KS2,sol1,gubb3,raymond}
% to any nonhomogeneous boundary data $g\in {B}^{\al-\frac{1}{q},\frac{\al}{2}-\frac{1}{2q}}_q({\mathbb R}^n_+\times (0,T))$,  % if $ \al>\frac{1}{q}$, $1<q<\infty$, and $g\in \dot  {B}^{\al-\frac{1}{q},\frac{\al}{2}-\frac{1}{2q}}_q({\mathbb R}^n_+\times (0,T))\cap \dot {B}^{-\frac{1}{q},\frac{1}{2q}}_q({\mathbb R}^n_+\times (0,T))$ if $0\leq \al\leq \frac{1}{q}$
% $0\leq \al\leq 1$.
% (See also \cite{raymond1}). %, where  in \cite{raymond1} $V^{s,\frac{s}{2}}(\Omega \times (0,T))$, $0\leq s\leq 2$, has been considered as a solution spaces  in  a bounded domain (see \cite{raymond1} for the definition of $V^{s,\frac{s}{2}}(\Omega \times (0,T))$.
%In \cite{CJ}, $B^{\al,\frac{\al}{2}}_q(\R_+ \times (0,T))$ for $\frac{1}{q}\leq \al\leq 1+\frac{1}{q}$ and $1<q<\infty$ has
%been considered as a solution spaces when the zero initial data  and zero external force are given.
%In this paper(in section \ref{zero}), we derive the estimate of solution in  $B^{\al,\frac{\al}{2}}_q(\R_+ \times (0,T))$ for $0\leq \al\leq 2$
% and   $1<q<\infty$
%%when the initial data  and external force are given to the zero, and our
%via different approach  from the one used in \cite{CJ}.

%????????????????????

%( $L^q(\R_+ \times (0,T))$ for $ \al=0$ )

We organize this paper as follows.
In section \ref{notation}, we introduce the  notations and the function spaces.
In section \ref{preliminary}  the  preliminary estimates in anisotropic spaces for the heat operator, Riesz operator, and Poisson operator are given.
In section \ref{zero}, we  consider Stokes equations \eqref{maineq-stokes} with  the zero force and the zero initial velocity, and give the proof of Theorem \ref{Rn-1}.
%, which  can be understood the proof  of Theorem \ref{thm1} for the data $f=h=0$ and $g_n=0$.
In section \ref{general}, we  complete the proof of Theorem \ref{thm-stokes} with the help of Theorem \ref{Rn-1} and the preliminary estimates in section \ref{preliminary}.
In section \ref{nonlinear}, we give the proof of Theorem \ref{thm3} applying the estimate of Theorem \ref{thm-stokes} to the  approximate solutions.

\section{Notations and Definitions}

\label{notation}
We denote by  $x'$ and $x=(x',x_n)$ the points of spaces $\Rn$ and $\R$, respectively.
%We denote $x  = (x',x_n) \in {\mathbb R}^n_+$ for $x'\in \Rn $ and denote
The multiple derivatives are denoted by $ D^{k}_x D^{m}_t = \frac{\pa^{|k|}}{\pa x^{k}} \frac{\pa^{m} }{\pa t}$ for multi index
$ k$ and nonnegative integer $ m$.
For vector field $f=(f_1,\cdots, f_n)$ on $\R$, we write  $f'=(f_1,\cdots, f_{n-1})$ and $f=(f',f_n)$.
Throughout this paper we denote by $c$ various generic constants. % and by $c(*,\cdots,*)$
%the constants depending only on the quantities appearing  in the parenthesis.
Let ${\R}_+=\{x=(x',x_n): x_n>0\}$, $\overline{\R}_+=\{x=(x',x_n): x_n\geq 0\}$,
${\mathbb R}_+=(0,\infty)$.

For the Banach space $X$ and interval $I$, we denote by  $X'$ the dual space of $X$, and by $L^p(I;X), 1\leq p\leq \infty$  the usual Bochner space.
For $0< \theta<1$ and $1<p<\infty$,  denote by   $(X,Y)_{\theta,p}$ the real interpolation of the Banach space $X$ and $Y$.
For $1\leq p\leq \infty$, we write $p'=\frac{p}{p-1}$.

 %For an integer $m$, ${\mathcal S}({\mathbb R}^m)$ stands for the Schwartz space of all complex-valued rapidly decreasing and  infinitely differentiable  function on ${\mathbb R}^m$, and let ${\mathcal S}'({\mathbb R}^m)$ be the set of all tempered distributions on ${\mathbb R}^m$.
 Let   $\Omega$ be a $m$-dimensional Lipschitz domain, $m\geq 1$. Denote by
 $C^\infty_0(\Omega)$ stands for the collection of all complex-valued infinitely differentiable  functions in ${\mathbb R}^m$ compactly supported in $\Omega$. %,% and let ${\mathcal D}'(\Omega)$ be the set of distributions on $\Omega$.
 Let $1\leq p\leq \infty$ and $k$ be a nonnegative integer.
 The norms of usual  Lebesque space $L^p(\Omega)$, the usual  Soboelv space  $W^k_p(\Omega)$ (Slobodetskii space $W^s_p(\Omega)$ for  noninteger $s>0$) and the usual   homogeneous Sobolev  spaces $\dot{W}^k_p(\Omega)$  are written by $\|\cdot\|_{L^p(\Omega)}, \ \|\cdot\|_{\dot{W}^k_p(\Omega)}, \ \|\cdot\|_{{W}^k_p(\Omega)}$, respectively.
    Note that $W^0_p(\Omega)=\dot{W}^0_p(\Omega)=L^p(\Omega)$.
   For $s\in {\mathbb R}$ and $1\leq p,q\leq \infty$, %denote by  $H^s_p(\Omega)$ and $\dot{H}^s_p(\Omega)$  the usual Bessel-Potential space and the homogeneous Bessel-Potential space, respectively,
%       % the usual (fractional) Sobolev spaces and homogeneous (fractional) Sobolev spaces.
%%   Note that $W^0_p(\Omega)=\dot{W}^0_p(\Omega)=L^p(\Omega)$. It is known that $(L^p_s
%   % Note that $  H^k_p(\Omega)=W ^k_p(\Omega)$ and $ \dot H^k_p(\Omega)=\dot W ^k_p(\Omega)$.        % It is known that .
%%     denote by $\mbox{\r{H}}^s_p(\Omega)$ the complete closure of $C^\infty_0(\Omega)$ with respect to the norm norm of  ${H}^s_p(\Omega)$ .
%%           It is  known that $H^k_p(\Omega)=W^k_p(\Omega)$;
%%   $\dot{H}^s_p(\Omega)=[L^p(\Omega), \dot{W}^k_p(\Omega)]_{\frac{s}{k}}$ and ${H}^s_p(\Omega)=[L^p(\Omega), W^k_p(\Omega)]_{\frac{s}{k}}$, for $0<s<k$, respectively;
%%            $\dot{H}^s_p(\Omega)=\Big( \dot{\mbox{\r{ H}}}^{-s}_{p'}(\Omega)\Big)' $ for $s<0$. It is also known that
%%    $H^s_p(\Omega)=\mbox{ \r{H}}^s_{p'}(\Omega)$ if $0<s\leq \frac{1}{p}$
%% $\{f\in H^s_p(\Omega): f|_{\partial \Omega}=0\}=\mbox{ \r{H}}^s_{p'}(\Omega) $ if $sp>1$.
%%For $s\in {\mathbb R}$ and $1\leq p,q\leq \infty$,
 denote by  $B^s_{p,q}(\Omega)$ and $\dot{B}^s_{p,q}(\Omega),1\leq p,q\leq \infty$  the usual Besov spaces and the homogeneous Besov spaces, respectively. %,  denote by
%   $W^s_p(\Omega)$ the usual Sobolev-Slobodetski space.%,  defined in \cite{BL}. %and denote by
%$\mbox{ \r{B}}^s_{p}(\Omega)$ the completion of $C^\infty_{0}(\Omega)$ with respect to the norm of $B^s_p(\Omega)$.
For the simplicity, set $B^s_{p}(\Omega)=B^s_{p,p}(\Omega)$ and $\dot B^s_{p}(\Omega)=\dot B^s_{p,p}(\Omega)$.
%It is known that $H^{k}_p(\Omega)=W^k_p(\Omega)$ for nonnegative integer $k$.
%For $s\geq 0$, Sobolev-Slobodetski space $W^s_p(\Omega)$ is defined by $W^s_p(\Omega):=B^s_p(\Omega)$ if $s$ is not integer and $W^s_p(\Omega):=W^k_p(\Omega)$ if $s=k$.

 %For a bounded domain $\Omega$ in ${\mathbb R}^m$ $B^s_p(\Omega)$ is the set of distributions $f$ in $\Omega$ which is a restriction of a distribution in ${\mathbb R}^m$ with a finite norm
% $\|f\|_{B^s_p(\Omega)}=\inf\{ \|g\|_{B^s_p({\mathbb R}^m)}: g|_\Omega =f  \}$.
  % $B^s_p(\Omega)=B^s_{p' 0}(\Omega)$ if $0<s < \frac{1}{p}$
% $\{f\in B^s_p(\Omega): f|_{\partial \Omega}=0\}=B^s_{p'0}(\Omega) $ if $sp>1$.
%It is also known that
%for $k<s<k+1$, $\dot{B}^s_p(\Omega)$ can be  equivalently defined by the norm
%$
%\|f\|_{\dot{B}^s_p(\Omega)}=\Big( \sum_{|\beta|=k} \int_\Omega \int_\Omega \frac{|D^\beta_x f(x) - D^\beta_yf(y)|^p}{|x-y|^{m+(s-k)p}} dxdy\Big)^{\frac{1}{p}}.$
 Denote by $\mbox{\r{B}}^{s}_{p}(\Omega)$  the set of distributions  $f\in B^{s}_p({\mathbb R}^m)$ which is  supported in $ \Omega$ with  norm
 $\|f\|_{\mbox{\r{B}}^{s}_{p}(\Omega)}=\|f\|_{B^{s}_{p}({\mathbb R}^m)} <\infty $.
% %Denote by $B^{s,\frac{s}{2}}_{p*}(\Rn\times (0, T))$ the set of distributions $f\in B^{s,\frac{s}{2}}_p(\Rn\times [0,\infty))$ which is  supported in $ \Rn \times [0,T))$ with a finite norm
%% $\|f\|_{B^{s,\frac{s}{2}}_{p*}(\Rn\times (0,T))}=\|f\|_{B^{s,\frac{s}{2}}_{p}(\Rn\times [0,\infty))}  $.
%
%
{%\color{red}{%
 It is known that  $B^{s}_{p}(\bar{\Omega})=B^{s}_{p}(\bar{\Omega})
 % and
% \[
=\mbox{\r{B}}^{s}_{p}(\Omega) % = \left\{\begin{array}{l}
% \{g\in B^{s}_{p}({\Omega}): g|_{t=0}=0\}\mbox{ if }s>\frac{1}{p},\\
% \{g\in B^{s}_{p}({\mathbb R}^m): \mbox{\rm supp }g\subset \bar{\Omega}\}\mbox{ if }s=\frac{1}{p},\\
% B^{s}_{p}(\Omega)
\mbox{ if }0\leq  s<\frac{1}{p}$,  $\mbox{\r{B}}^s_{p}({\Omega})=(\mbox{{B}}^{-s}_{p}(\Omega))'$  if $s<0$(when  $\Omega$ is Lipschitz domain).%,\\
% \Big(B^{-s}_{p'}(\Omega)\Big)'\mbox{ if }\leq -1+\frac{1}{p}.
% \end{array}\right.
% \]
%
%
}%}

It is also known that  $B^s_p(\Omega)=L^p(\Omega)\cap \dot{B}^s_p(\Omega)$ for $s>0$; $B^s_p(\Omega)=L^p(\Omega)+\dot{B}^s_{p}(\Omega)$ for $s<0$;
  %$B^{s}_p({\mathbb R}^m)=(L^p({\mathbb R}^m), B^{s_1}_p({\mathbb R}^m))_{\frac{s}{s_1}, p}$ or $0<s<s_1;
 %$B^{-s}_p({\mathbb R}^m)=(B^s_{p}({\mathbb R}^m))'$ for $s>0$;
 $B^{s}_p(\Omega)=(L^p(\Omega), W^{s_1}_p(\Omega))_{\frac{s}{s_1}, p}$ for $0<s<s_1$; %;$B^{s}_p(\Omega)=(H^{s_1}_p(\Omega),L^p(\Omega))_{\frac{s}{s_1}, p}$ for;
 $B^{s}_p({\mathbb R}^m)=(B^{s_1}_p({\mathbb R}^m), B^{s_2}_p({\mathbb R}^m))_{\theta, p}$ % and $\dot B^{s}_p({\mathbb R}^m)=(\dot B^{s_1}_p({\mathbb R}^m), \dot B^{s_2}_p({\mathbb R}^m))_{\theta, p}$
 for $s=(1-\theta)s_1+\theta s_2$ for $0<\theta<1$.
  See \cite{BL,St,Tr,Triebel, Triebel2} for  more properties of the   Besov spaces.

Let $I$ be an interval of ${\mathbb R}$. For $ k\in {\mathbb N}\cup\{0\}$, denote by  $W^{2k,k}_q(\Omega \times I)$  and $\dot{W}^{2k,k}_q(\Omega \times I)$  the usual anisotropic Sobolev  space (Slobodetskii space  $W^{s,\frac{s}{2}}_q(\Omega \times I)$ for noninteger $s>0$) and  the usual homogeneous anisotropic Sobolev space, respectively. Note that ${W}^{0,0}_p(\Omega\times I)=\dot{W}^{0,0}_p(\Omega\times I)=L^p(\Omega\times I).$
%Note that $  H^{2k,k}_p(\Omega\times I)=W^{2k,k}_p(\Omega\times I)$ and $ \dot  H^{2k,k}_p(\Omega\times I)=\dot W^{2k,k}_p(\Omega\times I)$.
%For  $s\in {\mathbb R}$, denote by $W^{s,\frac{s}{2}}_p(\Omega\times I)$ be the usual Sobolev-Slobodetskii space.
%For $s\geq 0$, Anisotropic Sobolev-Sobodetskii space $W^{s,\frac{s}{2}}_p(\Omega \times I)$ is  defined by
%$W^{s,\frac{s}{2}}_p(\Omega \times I)=L^p(I;W^s_p(\Omega))\cap W^{\frac{s}{2}}_p(I;L^p(\Omega))$.
%
%%For a positive nonintegral $l$,  $ W^{l,\frac{l}{2}}_{p}(\Omega\times I)$  denotes the Sobolev-Slobodetskii space (See  \cite{lady?}).
%%In particular, if $0<
%%\[
%%\|f\|_{W^{1,\frac{1}{2}}_{p}(\Omega\times I)}=\|f\|_{L^p(\Omega\times I)}+\|f\|_{L^p(\Omega\times I)}+\Big( \sum_{|\beta|=k} \int^T_0\int_\Omega \int_\Omega \frac{|D_x f(x,t) - D_yf(y,t)|^p}{|x-y|^{m+(s-k)p}} dxdydt\Big)^{\frac{1}{p}}+\Big( \sum_{|\beta|=k} \int_\Omega \int^T_0\int^T_0 \frac{|D^\beta_x f(x,t) - D^\beta_sf(x,s)|^p}{|x-y|^{m+(s-k)p}} dxdy\Big)^{\frac{1}{p}}
%%\]    (See \cite{lady?}).

Now, we introduce  anisotropic Besov space and its properties (see chapter 4 of \cite{Triebel}, chapter 5 of \cite{Triebel2}, and chapter 3 of \cite{amman-anisotropic} for the definition of anisotropic spaces and their properties although different notations were used in each books).

Define anisotropic Besov space $B^{s,\frac{s}{2}}_p({\mathbb R}^m\times {\mathbb R})$ by
$$B^{s,\frac{s}{2}}_p({\mathbb R}^m\times {\mathbb R})=\left\{\begin{array}{l}
L^p({\mathbb R};B^s_p({\mathbb R}^m)\cap L^p({\mathbb R}^m;B^{\frac{s}{2}}_p({\mathbb R})\mbox{ if }s>0,\\
L^p({\mathbb R};B^s_p({\mathbb R}^m)+ L^p({\mathbb R}^m;B^{\frac{s}{2}}_p({\mathbb R})\mbox{ if }s<0,\\
(B^{-1,-\frac{1}{2}}_q({\mathbb R}^m\times {\mathbb R}),B^{1,\frac{1}{2}}_q({\mathbb R}^m\times {\mathbb R}))_{1,q}\mbox{ if }s=0.\end{array}\right.$$
The homogeneous anisotropic Besov space $\dot B^{s,\frac{s}{2}}_p({\mathbb R}^m\times {\mathbb R})$ is defined analogously.
The above  definition is equivalent to the definitions in \cite{amman-anisotropic,Triebel}.

Let $\Omega$ be a Lipshcitz domain of $\R$ and $I$ be an interval in ${\mathbb R}$. Let % ${\mathcal D}$ be the set of infinitely differentiable and compactly supported functions  on  $\Omega \times I$ and
${\mathcal D}'$ be the distributions on  $\Omega \times I$. %, respectively.
%We denote $Q_T = \Omega \times I$%, $Q^+_T = \R_+
%\times (0, T), \,\, 0< T \leq \infty$, unless any confusion is to be expected.
%Let $S$ be  a one of  $ Q_T$ and $Q^+_T$.
For $s\in {\mathbb R}$, an anisotropic  Besov space $B^{s, \frac s2 }_{q} (\Omega\times I)$ is defined by
\[
B^{s, \frac12 s}_q(\Omega\times I):=\{ f\in {\mathcal D}' \, | \, f=F|_{\Omega\times I}\mbox{ for some }F\in    B^{s, \frac12 s}_q(\R \times {\mathbb R})   \}
\]
with norm
$
\|f\|_{B^{s, \frac12 s}_{q} (\Omega\times I)}=\inf\{ \|F\|_{B^{\al, \frac12 \al}_q(\R \times {\mathbb R})}: F\in B^{s, \frac12 s}_q(\R \times {\mathbb R})\mbox{ with } F|_{\Omega\times I}=f\}.$
The homogeneous anisotropic spaces $
\dot{B}^{s, \frac s2 }_{q} (\Omega\times I)$ is   defined analogously.

Denote by $B^{s,\frac{s}{2}}_{p0}(\Omega \times (0,T))$  the set of distributions  $f\in B^{s,\frac{s}{2}}_p(\Omega\times (-\infty,T))$ which is  supported in $ \Omega \times (0,T)$ with
 $\|f\|_{B^{s,\frac{s}{2}}_{p0}(\Omega\times (0,T))} =\|f\|_{B^{s,\frac{s}{2}}_{p}(\Omega\times (-\infty,T))}<\infty  $.
 %Denote by $B^{s,\frac{s}{2}}_{p*}(\Rn\times (0, T))$ the set of distributions $f\in B^{s,\frac{s}{2}}_p(\Rn\times [0,\infty))$ which is  supported in $ \Rn \times [0,T))$ with a finite norm
% $\|f\|_{B^{s,\frac{s}{2}}_{p*}(\Rn\times (0,T))}=\|f\|_{B^{s,\frac{s}{2}}_{p}(\Rn\times [0,\infty))}  $.
%
%{\color{red}{
%
 It is known that  $B^{s,\frac{s}{2}}_{p}(\Omega\times [0,T])=B^{s,\frac{s}{2}}_{p}(\Omega\times [0,T])=B^{s,\frac{s}{2}}_{p}(\Omega\times [0,T))
 =B^{s,\frac{s}{2}}_{p0}(\Omega\times (0, T))  \mbox{ if }0\leq s<\frac{2}{p}$ and $B^{s,\frac{s}{2}}_{p0}(\Omega\times (0, T))=(B^{-s,-\frac{s}{2}}_{p'}(\Omega\times (0, T)))'$ if $s<0$.

%}}

  The  properties of the  anisotropic Besov spaces are comparable with the properties of  Besov spaces:
%\begin{align*}
%&
$\dot B^{s,\frac{s}{2}}_{p}(\Omega\times I)   =L^p(I;\dot B^s_p(\Omega))\cap L^p(\Omega; \dot B^{\frac{s}{2}}_{p}(I))$ and $ B^{s,\frac{s}{2}}_{p}(\Omega\times I)   =L^p(\Omega\times I)\cap \dot B^{s,\frac{s}{2}}_{p}(\Omega\times I)$ for $s > 0$;
%&????????????????????????????????????????????????????????????????????????????????\\
%& B^{s,\frac{s}{2}}_p(\Omega\times I)   =L^p(\Omega\times I)\cap \dot B^{s,\frac{s}{2}}_p(\Omega\times I), \, s>0,\\
%&B^{s,\frac{s}{2}}_p(\Omega\times I)  =L^p(\Omega\times I)+\dot B^{s,\frac{s}{2}}_p(\Omega\times I), \, s <  0,\\
%&???????????????????????????????????????????????????????????????????????????????????\\
$\dot B^{s,\frac{s}{2}}_{p}(\Omega\times I)  =L^p(I;\dot B^s_p(\Omega))+ L^p(\Omega; \dot B^{\frac{s}{2}}_{p}(I))$
 and
 $ B^{s,\frac{s}{2}}_{p}(\Omega\times I)   =L^p(\Omega\times I)+ \dot B^{s,\frac{s}{2}}_{p}(\Omega\times I)$ for $s < 0$;
%$ H^{s,\frac{s}{2}}_{p}(\Omega\times I)   =L^p(\Omega\times I)\cap \dot H^{s,\frac{s}{2}}_{p}(\Omega\times I)$ and $ B^{s,\frac{s}{2}}_{p}(\Omega\times I)   =L^p(\Omega\times I)\cap \dot B^{s,\frac{s}{2}}_{p}(\Omega\times I)$ for s>0 and
%;
%& B^{s,\frac{s}{2}}_{p*}(\Omega\times (0,T)) = \{f\in B^{s,\frac{s}{2}}_p(\Omega\times (0,T)): f|_{ \Omega \times \{T \}}=0\},  \, s>\frac{2}{p},\\
$B^{s,\frac{s}{2}}_{p}(\Omega\times I)   =(L^p(\Omega\times I),
    W^{2k,k}_{p}(\Omega\times I))_{\frac{s}{2k},p}$ for $ 0<s<2k$%; $B^{s,\frac{s}{2}}_{p}(\Omega\times I)   =(L^p(\Omega\times I),
%    H^{s_1,\frac{s_1}{2}}_{p}(\Omega\times I))_{\frac{s_1}{s},p}$
;
   % In particular,
    $B^{\al,\frac{\al}{2}}_{p}({\mathbb R}^m\times {\mathbb R})   =(B^{\al_1,\frac{\al_1}{2}}_{p}({\mathbb R}^m\times {\mathbb R}),
   B^{\al_2,\frac{\al_2}{2}}_{p}({\mathbb R}^m\times {\mathbb R}))_{\te,p}, \, 0<\te<1, \,\, \al = (1-\te)\al_1 + \te \al_2$ for any real number $\al_1< \al_2$. See \cite{amman-anisotropic,Triebel,  Triebel2} for  more properties of the   anisotropic Besov spaces.

\begin{defin}[Weak solution to the Stokes equations]
\label{stokesdefinition}
Let  $1<q<\infty$. Let $h,g, f=\mbox{div}{\mathcal F}$ satisfy the same hypothesis as in Theorem \ref{thm-stokes}.
%Suppose that $
%g\in B_q^{\al-\frac{1}{q},\frac{\al}{2}-\frac{1}{2q}}(\Rn\times (0,T))$, $h\in B_q^{\al-\frac{2}{q}}(\R_+)$ and $f=\mbox{div}{\mathcal F}, \
%${\mathcal F}=\{F_{ij}\}_{i,j=1}^n\in L^p(\R_+\times {\mathbb R}_+)$ for some $1<p\leq q$.
Then a vector field $u\in L^q(\R_+\times (0,T))$ with $\nabla u\in L^p_{loc}(\R_+\times (0,T))$ is called a weak solution of the Stokes system \eqref{maineq-stokes} if the following conditions are satisfied:\\
%$\nabla u\in L^\infty(K  \times (\delta, T))$  for each $\delta > 0$ and for each compact subset $K$ of $\R_+$,
%\\
 %$u$ satisfies
 $\bullet$(In case $\infty>q>3$)
 \begin{align*}
-\int^T_0\int_{\R_+}u\cdot \Delta \Phi dxdt%&
=\int^T_0\int_{\R_+}u\cdot \Phi_t-{\mathcal F}:\nabla \Phi dxdt%\\
%&\quad
+<h,\Phi(\cdot,0)>_{\R_+}%\\
-<g,\frac{\partial \Phi}{\partial x_n}>_{\Rn\times {\mathbb R}_+}
%-\int^T_0\int_{\Rn}\Gamma_t*\tilde{h}\frac{\partial \Phi}{\partial x_n}(x,t)dx' dt
\end{align*}
for each $\Phi\in C^\infty_0(\overline{\R}_+\times [0,T))$ with $\mbox{div}_x\Phi=0$, $\Phi|_{x_n=0}=0$, where
 $<\cdot, \cdot>_{\R_+}$ denotes the duality paring between between $B^{-\frac{2}{q}}_q(\R_+)$ and $B^{\frac{2}{q}}_{q'}(\R_+)$ and  $<\cdot, \cdot>_{\Rn\times {\mathbb R}_+}$ denotes the duality paring between between $B^{-\frac{1}{q},-\frac{1}{2q}}_{q}(\Rn\times (0,T))$ and $B^{\frac{1}{q},\frac{1}{2q}}_{q'}(\Rn\times (0,T))$.
\end{defin}

$\bullet$(In case $1<q\leq 3$)
 \begin{align*}
-\int^T_0\int_{\R_+}(u-v)\cdot \Delta \Phi dxdt=\int^T_0\int_{\R_+}(u-v)\cdot \Phi_t-{\mathcal F}:\nabla \Phi dxdt
-<g-v|_{x_n=0},\frac{\partial \Phi}{\partial x_n}>_{\Rn\times {\mathbb R}_+)}
%-\int^T_0\int_{\Rn}\Gamma_t*\tilde{h}\frac{\partial \Phi}{\partial x_n}(x,t)dx' dt
\end{align*}
for each $\Phi\in C^\infty_0(\overline{\R}_+\times [0,T))$ with $\mbox{div}_x\Phi=0$, $\Phi|_{x_n=0}=0$, where $v=\Gamma_t*_x\tilde{h}$ and
   %$<\cdot, \cdot>_{\R_+}$ denotes the duality paring between between $B^{\al-\frac{2}{q}}_q(\R_+)$ and $B^{-\al+\frac{2}{q}}_{q'}(\R_+)$and
   $<\cdot, \cdot>_{\Rn\times {\mathbb R}_+}$ denotes the duality paring between between $B^{-\frac{1}{q},-\frac{1}{2q}}_{q0}(\Rn\times (0,T))$ and $B^{\frac{1}{q},\frac{1}{2q}}_{q'}(\Rn\times (0,T))$.
%See  \cite{BL,St,Tr,Triebel,Triebel2} for details.

\begin{defin}[Weak solution to the Navier-Stokes equations] Let  $\infty>q>n+2$.
Let $h,g$ satisfy the same hypothesis as in Theorem \ref{thm3}.
%Suppose that  $g\in B_q^{\al-\frac{1}{q},\frac{\al}{2}-\frac{1}{2q}}(\Rn\times (0,T))$ and $h\in B_q^{\al-\frac{2}{q}}(\R_+)$.
Then a vector field $u\in  L^q(\R_+\times (0,T))$  with $\nabla u\in L^p_{loc}(\R_+\times (0,T))$ for some $1<p\leq q$ is called a weak solution of the Navier-Stokes system \eqref{maineq2} if the following conditions are satisfied:\\
%1)$\nabla u\in L^q(K  \times (\delta, T))$  for each $\delta > 0$ and for each compact subset $K$ of $\R_+$,
 %$u$ satisfies
\begin{align*}
-\int^T_0\int_{\R_+}u\cdot \Delta \Phi dxdt=\int^T_0\int_{\R_+}u\cdot (\Phi_t+(u\otimes u):\nabla \Phi) dxdt
+<h,\Phi(\cdot,0)>_{\R_+}
-<g,\frac{\partial \Phi}{\partial x_n}>_{\Rn\times {\mathbb R}_+}
%-\int^T_0\int_{\Rn}\Gamma_t*\tilde{h}\frac{\partial \Phi}{\partial x_n}(x,t)dx' dt
\end{align*}
for each $\Phi\in C^\infty_0(\overline{\R}_+\times [0,T))$ with $\mbox{div}_x\Phi=0$, $\Phi|_{x_n=0}=0$.

\end{defin}

\section{\bf Preliminaries.}

\label{preliminary}
\setcounter{equation}{0}

\subsection{Basic Theories}

%The  embedding properties for the Besov spaces are well known: For $ 1\leq p_0\leq p_1 \leq \infty, \, s_0\geq s_1$ with $s_1-\frac{n}{p_1}=s_0-\frac{n}{p_0}$
%\[B^{s_0}_{p_0} (\R )\subset B^{s_1}_{p_1} (\R )\mbox{ with }
%\|f \|_{ B^{s_1}_{p_1} (\R )   } \leq c \| f \|_{ B^{s_0}_{p_0} (\R )  }.\]
%
%See \cite{BL} for the reference. The following embedding properties also holds in anisotropic Besov spaces (see \cite{amman-anisotropic,Triebel,Triebel2} for the reference).
%\begin{prop}\label{embedding}
%Let $ 1\leq p_0\leq p_1 \leq \infty, \, s_0\geq s_1$ with $s_0 - \frac{n+2}{p_0} = s_1 - \frac{n+2}{p_1}$.
%%Assume that $s_0 - \frac{n+2}{p_0} = s_1 - \frac{n+2}{p_1}$.
%Then the following inclusion holds
%\begin{align}
% B^{s_0,\frac{s_0}{2}}_{p_0} (\R \times {\mathbb R}) \subset  B^{s_1,\frac{s_1}{2}}_{p_1} (\R \times {\mathbb R})
%%\end{align*}
%\mbox{ with }
%%\begin{align}
%\label{prop3}
%\|f \|_{ B^{s_1,\frac{s_1}{2}}_{p_1} (\R \times {\mathbb R})   } \leq c \| f \|_{ B^{s_0,\frac{s_0}{2}}_{p_0} (\R \times {\mathbb R})  }.
%\end{align}
%%either $s_0 - \frac{n+2}{p_0} = s_1 - \frac{n+2}{p_1}$ and $q_0=q_1$, or
%
%\end{prop}
%% \end{itemize}

According to the usual trace theorem, if  $u\in B^s_p(\R_+)$ ($u\in W^s_p(\R_+)$), then $u|_{x_n=0}\in B_p^{s-\frac{1}{p}}(\Rn)$  for $s>\frac{1}{p}$ (see \cite{BL}), and  if $u\in B^{s,\frac{s}{2}}_p(\R_+\times (0,T))$ ($u\in W^{s,\frac{s}{2}}_p(\R_+\times (0,T))$), then $u|_{x_n=0}\in B^{s-\frac{1}{p},\frac{s}{2}-\frac{1}{2p}}_p(\Rn\times (0,T))$  for $s>\frac{1}{p}$ and $u|_{t=0}\in B^{s-\frac{2}{p}}_p(\R_+)$  for $s>\frac{2}{p}$
 %Likewise, if $u\in W^{k}_p(\R_+)$, then $u|_{x_n=0}\in B^{k-\frac{1}{p}}_p(\Rn)$, and if $u\in W^{k,\frac{k}{2}}_p(\R_+\times (0,T))$, then   $u\in B^{k-\frac{1}{p},\frac{k}{2}-\frac{1}{2p}}_p(\Rn\times (0,T))$ and  $u\in B^{k-\frac{2}{p}}_p(\R_+))$.
(see \cite{amman-anisotropic,Triebel,Triebel2}).
On the other hand, for a solenoidal vector field  $u\in L^p(\R_+)$, $1<p<\infty$ it holds that
%the following trace theorem also holds.
%\begin{prop}\label{tracetheorem}
%Let $1<p<\infty$.
%Let $u \in L^p(\R_+)$ such that $div \, u =0$. Then,
$u_n \in \dot B^{-\frac1p}_p (\Rn)$ with
\begin{align}
\label{tracetheorem}
\| u_n\|_{\dot B^{-\frac1p}_p(\Rn)} \leq c \| u\|_{L^p(\R_+)}.
\end{align}

%\end{prop}

%\subsection{Estimate of Riesz operator in $\R\times (0,T)$}

Let $R=(R_1,\cdots, R_{n})$ be the Riesz operator on $\R$.
%In \cite{sawada} it is shown that for any $\al>0$ and $s\in {\mathbb R}$
%\[
%\|(-\Delta)^{\frac{\al}{2}}Rf\|_{B^s_{q}({\mathbb R}^n)}\leq c\|f\|_{B^{s+\al}_{q}({\mathbb R}^n)}.
%\]
%Moreover,
It is the well known fact that $R_i$ is bounded operator from $B^s_p(\R)$ to $B^s_p(\R)$ (from $W^k_p(\R)$ to $W^k_p(\R)$, $k=0,\pm 1,\cdots$) for $s\in {\mathbb R}$ and $1<p<\infty$ %and the adjoint operator of $R_i$ is $-R_i$(Hence $R_i$ is bounded from ${B}^{s}_{p}({\mathbb R}^n)$ to ${B}^{s}_{p}({\mathbb R}^n)$ for any $s\in {\mathbb R}$ and $1<p<\infty$).
%\begin{align}
%\|Rf\|_{{B}^{s}_{q}({\mathbb R}^n)}&\leq c\|f\|_{{B}^{s}_{q}({\mathbb R}^n)},\ s>0,\\
%%\end{equation}
%%\[
%\|Rf\|_{\dot{W}^k_{q}({\mathbb R}^n)}&\leq c\|f\|_{\dot{W}^k_q({\mathbb R}^n)}, k\in {\mathbb N}\cup\{0\}.
%\end{align}
(see \cite{St} for the reference).
%for any $s\in {\mathbb R}, 1<q<\infty$.
%\end{prop}
%\section{}
Using the fact that $B^{s,\frac{s}{2}}_{q}(\Omega\times (0,T))=L^p(0,T;B^s(\Omega))\cap L^p(\Omega;B^{\frac{s}{2}}_p(0,T))$ %(or $W^{s,\frac{s}{2}}_{q}({\mathbb R}^n\times (0,T))=L^p(0,T;H^s(\R))\cap L^p(\R;W^{\frac{s}{2}}_p(0,T))$)
for $s>0$ and $R_i$ is self-adjoint operator,
the following boundedness property  holds for anisotropic Besov spaces as follows:
%\begin{prop}
%\label{propriesz2}
%Let $1<q<\infty$ and $s>0$.
%%For $f\in {B}^{s,\frac{s}{2}}_q(\R\times {\mathbb R})$, $s\in {\mathbb R}$, we have
%Then
%\[
%\|Rf\|_{B^{s,\frac{s}{2}}_{q}({\mathbb R}^n\times (0,T))}\leq c\|f\|_{B^{s,\frac{s}{2}}_{q}({\mathbb R}^n\times (0,T))}.
%\]
\begin{align}
\label{propriesz2}
\|Rf\|_{ B^{s,\frac{s}{2}}_{q}({\mathbb R}^n\times (0,T))}&\leq c\|f\|_{ B^{s,\frac{s}{2}}_{q}({\mathbb R}^n\times (0,T))},s\in {\mathbb R}, 1<q<\infty,
\\
%and %for $f\in L^q(\R\times (0,T])$ we have
%\[
(\|Rf\|_{ W^{k,\frac{k}{2}}_{q}({\mathbb R}^n\times (0,T))}&\leq c\|f\|_{ W^{k,\frac{k}{2}}_{q}({\mathbb R}^n\times (0,T))},k=0,1,2,\cdots).
\end{align}

\subsection{Estimate of the heat operator}

Define three types of heat operator $ T_1, T_2,T_1^*, T_2^*$ by
\begin{align*}
 T_1f&=\int^t_{-\infty}\int_{\R}\Gamma(x-y,t-s)f(y,s)dyds, \\
T_2g&=\int^t_{-\infty}\int_{\Rn}\Gamma(x'-y',x_n,t-s)g(y',s)dy'ds,\\
%\end{align*}
%The following is well known estimates(see  section 4.3 in \cite{lady?} for the reference):
%%\begin{prop}
%%\label{hear-known}
%%Let   $1<q<\infty$.% and let $k$ be a nonnegative integer.
%%Then
%%$T_0:\dot{B}_q^{2-\frac{2}{q}}(\R)\rightarrow\dot{W}^{2,1}_q(\R \times (0, \infty))$, $T_1:L^q(\R\times {\mathbb R})\rightarrow\dot{W}^{2,1}_q(\Rn \times {\mathbb R})$, $T_2:\dot{B}_q^{1-\frac{1}{q},\frac{1}{2}-\frac{1}{2q}}(\Rn \times {\mathbb R})\rightarrow\dot{W}^{2,1}_q(\Rn \times {\mathbb R})$ with
%%%for $k\in {\mathbb N}\cup \{0\}$ and $1<q<\infty$:
%\begin{align}
%\label{T_01}
%\|T_0h\|_{\dot{W}^{2,1}_q(\R \times (0, \infty))}&\leq c\|h\|_{\dot{B}_q^{2-\frac{2}{q}}(\R)},\\
%\label{T_11}\|T_1f\|_{\dot{W}^{2,1}_q(\R \times {\mathbb R})}&\leq c\|f\|_{L^q(\R\times {\mathbb R})},\\
%\label{T_21}\|T_2g\|_{\dot{W}^{2,1}_q(\R_+ \times {\mathbb R})}&\leq c\|g\|_{\dot{B}_q^{1-\frac{1}{q},\frac12-\frac{1}{2q}}(\Rn \times {\mathbb R})}.
%\end{align}
%%\end{prop}
%Define $T_1^*$  and $T_2^*$  by
%\begin{align*}
T_1^*f(y,s) & = \int^{\infty}_s \int_{\R} \Ga(x-y, t-s) f(x,t) dxdt,\\
%\begin{equation}
T_2^*g(y,t) &=\int^\infty_s \int_{\Rn} \Ga(x'-y', y_n, t-\tau)
g(x', \tau) dx' dt.
%\end{equation}
\end{align*}

%By the multiplier theorem after taking Fourier transformation in terms of $(x,t)$ variables to the heat kernel,   estimates holds.
Observing that $T^*_1$ is the adjoint operator of $T_1$, we can derive the following estimate for $T_1$ and $T_1^*$.
%. Using this The following estimates are obtained by the multiplier theorem.
%for the heat operator are basic tools for other estimates involved with the heat kernel.
  %In particular, the following two lemmas will be useful for the subsequent propositions of this section.
 \begin{lemm}\label{lemma0115}
 Let $1<p<\infty$ and $0\leq \al\leq 2$.
%Let $T$  and $T^*$ be the operator defined by
%\begin{align*}
%T f(x,t) & = \int_{-\infty}^t \int_{\R} \Ga(x-z, t-\tau) f(z,\tau) dzd\tau,\\
%T^*f(y,s) & = \int^{\infty}_s \int_{\R} \Ga(x-y, t-s) f(x,t) dxdt.
%\end{align*}
%Then, for $1<p<\infty$ we have
\begin{align*}
%
%
%
%
%\| T_1f\|_{\dot H^{\al, \frac{\al}{2}}_p (\R \times {\mathbb R})} + \| T^*_1 f\|_{\dot H^{\al, \frac{\al}{2}}_p (\R \times {\mathbb R})}
%&\leq c \| f\|_{\dot B^{\al-2, \frac{\al}{2}-1}_p (\R \times {\mathbb R})},\\
 \| T_1f\|_{\dot{W}^{2k,k}_p (\R \times {\mathbb R})} + \| T^*_1 f\|_{\dot{W}^{2k,k}_p (\R \times {\mathbb R})}
&\leq c \| f\|_{\dot B^{2k-2,k -1}_p (\R \times {\mathbb R})}, \quad k=0,1,\\
\| T_1f\|_{\dot B^{\al, \frac{\al}{2}}_p (\R \times {\mathbb R})} + \| T^*_1 f\|_{\dot B^{\al, \frac{\al}{2}}_p (\R \times {\mathbb R})}
&\leq c \| f\|_{\dot B^{\al-2, \frac{\al}{2}-1}_p (\R \times {\mathbb R})},  \quad 0<\al< 2.
\end{align*}
%Here $\dot H^{0,0}_p(\R \times {\mathbb R})=L^p(\R \times {\mathbb R})$.
\end{lemm}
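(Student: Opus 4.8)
The plan is to treat $T_1$ as the inverse of the full space-time heat operator $\pa_t-\Delta$ on $\R\times{\mathbb R}$ and to read off every estimate from its Fourier symbol. Taking the space-time Fourier transform (frequency $\xi$ for $x$ and $\tau$ for $t$), the relation $(\pa_t-\Delta)T_1f=f$ shows that $T_1$ is the Fourier multiplier with symbol $m(\xi,\tau)=(i\tau+|\xi|^2)^{-1}$, while its adjoint $T_1^*$ is the multiplier with symbol $\overline{m}(\xi,\tau)=(-i\tau+|\xi|^2)^{-1}$; one checks from the definitions that the defining space-time integrals converge and that $T_1^*$ is indeed the $L^2$-adjoint of $T_1$. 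The structural fact driving everything is that $m$ is smooth away from the origin and parabolic-homogeneous of degree $-2$: under $(\xi,\tau)\mapsto(\lambda\xi,\lambda^2\tau)$ one has $m(\lambda\xi,\lambda^2\tau)=\lambda^{-2}m(\xi,\tau)$, and likewise for $\overline m$. Since every estimate for $T_1^*$ comes from the corresponding one for $T_1$ by replacing $m$ with $\overline m$ (equivalently, by duality), I would only discuss $T_1$.

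For the homogeneous anisotropic Besov estimate ($0<\al<2$) I would use the parabolic Littlewood--Paley characterization of $\dot B^{s,\frac{s}{2}}_p(\R\times{\mathbb R})$, in which the dyadic blocks $P_j$ are localized to the parabolic annuli $\{(\xi,\tau):|\xi|^4+\tau^2\sim 2^{4j}\}$ and $\|g\|_{\dot B^{s,\frac{s}{2}}_p}\sim\big(\sum_j(2^{sj}\|P_jg\|_{L^p})^p\big)^{1/p}$. On the $j$-th annulus the degree $-2$ symbol factors as $m=2^{-2j}m_j$, with $m_j$ a fixed smooth compactly supported bump after parabolic rescaling, so the parabolic Mikhlin--H\"ormander theorem yields $\|P_jT_1f\|_{L^p}\le c\,2^{-2j}\|\tilde P_jf\|_{L^p}$ uniformly in $j$, where $\tilde P_j$ is a fattened projection. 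Summing in $\ell^p$ against the weight $2^{\al j}$ turns the gain $2^{-2j}$ into a shift of the Besov index, giving $\|T_1f\|_{\dot B^{\al,\frac{\al}{2}}_p}\le c\|f\|_{\dot B^{\al-2,\frac{\al}{2}-1}_p}$, which is exactly the claimed inequality since $\tfrac{\al-2}{2}=\tfrac{\al}{2}-1$.

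For the Sobolev endpoints $k=0,1$ I would argue directly from the heat kernel rather than staying within the Besov scale. For $k=1$ the top-order contributions to $\|T_1f\|_{\dot W^{2,1}_p}$ are $\pa_tT_1f$ and $\pa_{x_i}\pa_{x_j}T_1f$, whose convolution kernels $\pa_t\Ga$ and $\pa_{x_i}\pa_{x_j}\Ga$ are parabolic Calder\'on--Zygmund kernels (Gaussian derivatives, homogeneous of parabolic degree $-(n+2)$ with the correct cancellation); hence the corresponding operators are bounded on $L^p(\R\times{\mathbb R})$, which is the classical parabolic maximal regularity. The case $k=0$ is the dual (again degree $-2$) statement. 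It then remains to transfer these $L^p$-based bounds to the Besov right-hand side $\dot B^{2k-2,k-1}_p$ by means of the embeddings and interpolation identities for the anisotropic spaces collected in Section~\ref{notation}.

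The degree counting for the Besov scale is essentially automatic once homogeneity is in hand, so the technical work concentrates in two places. First, one must verify the uniform-in-$j$ Mikhlin--H\"ormander bounds for the rescaled symbols and set up the homogeneous anisotropic spaces correctly, since they are defined only modulo polynomials and the singular space-time integrals defining $T_1,T_1^*$ require justification at low frequencies. Second, and this is what I expect to be the genuine obstacle, one must reconcile the $L^p$-based Sobolev spaces $\dot W^{2k,k}_p$ on the left of the first inequality with the Besov spaces $\dot B^{2k-2,k-1}_p$ on the right---in particular the borderline space $\dot B^{0,0}_p$, whose relation to $L^p$ is $p$-dependent---so that the clean multiplier and Calder\'on--Zygmund estimates can be delivered on precisely the scale stated in the lemma.
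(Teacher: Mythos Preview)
Your argument is correct and, at the endpoints, essentially identical to the paper's: for $k=1$ both you and the paper appeal to parabolic Calder\'on--Zygmund / Mikhlin multiplier theory for the symbol $(i\tau+|\xi|^2)^{-1}$ to obtain $\|T_1f\|_{\dot W^{2,1}_p}\le c\|f\|_{L^p}$ (the paper quotes this from Lady\v{z}enskaja--Solonnikov--Ural'ceva), and the $k=0$ estimate is obtained in both cases by duality, using that $T_1^*$ is the adjoint of $T_1$.

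The genuine difference lies in the Besov range $0<\al<2$. You propose a direct parabolic Littlewood--Paley argument, exploiting the degree $-2$ homogeneity of the symbol on each dyadic annulus. The paper instead simply applies real interpolation between the already-established endpoints $\dot W^{0,0}_p\to\dot W^{2,1}_p$ and $\dot W^{-2,-1}_p\to L^p$, which immediately yields $\dot B^{\al-2,\frac{\al}{2}-1}_p\to\dot B^{\al,\frac{\al}{2}}_p$. Your route is more self-contained and makes the mechanism (degree $-2$ symbol) transparent, but it requires setting up the parabolic Littlewood--Paley characterisation and checking uniform Mikhlin bounds; the paper's interpolation argument is a one-liner once the endpoints are in hand. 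Your closing caveat about the mismatch between $\dot B^{0,0}_p$ and $L^p$ for $p\neq 2$ is well observed; the paper in fact proves the $k=0,1$ estimates with $\dot W^{\pm 2,\pm 1}_p$ and $L^p$ rather than the Besov spaces written in the statement, and does not comment on the discrepancy.
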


%Although  Proposition \ref{lemma0115} might be  well known to the experts,  in Appendix we give its  proof % of Proposition\ref{tracetheorem}, Proposition  \ref{propriesz2}, Proposition  \ref{proppoisson2}, Proposition  \ref{lemma0115}
% for the clarity.
%
%
%\begin{coro}
%
%
%\end{coro}
%
%
%\begin{proof}
%For $\phi \in C^\infty_0 (\Rn \times (0, T))  $ and $f\in C^\infty_0 (\R \times (0, T])$, we have
%\begin{align*}
%%\| {\mathcal T}G \|_{L^q(\R \times (0, T))} = \sup_{\| \phi\|_{L^{q'} (\R \times (0, T))} =1}
%\int_0^T \int_{\R} T_1 f(x,t) \phi(x,t) dxdt
%&= \int_0^T \int_{\R}f(y,\tau) \int_s^T \int_{\R}  \Ga(x-y, t-\tau)
%\phi(x, t) dx dt dy d\tau\\
% &\leq \| f \|_{ W^{-2, -1}_{q0}(\Rn \times (0, T))}    \|   T^*_1  \phi \|_{ W^{2,1}_{q'*}(\Rn \times (0, T))}  \\
% &\leq c \max(1, T^{\frac1{2q}} )\| f \|_{ W^{-2, -1}_{q0}(\Rn \times (0, T))}    \|     \phi \|_{ L^{q'}(\Rn \times (0, T))}.
%% & = c\max(1, T^{\frac1{2q}} ) \| G \|_{ \mbox{\r{ B}}^{\al-\frac1q,  \frac{\al}2 -\frac1{2q}}_q(\Rn \times (0, T))}    \|   \phi \|_{L^{q'} (\R \times  (0,T))}.
%\end{align*}
%Since $C^\infty_0 (\R \times (0, T])$ is dense subspace of $W^{-2, -1}_{q0} (\R \times (0, T))$, we have
%\begin{align}
%\label{T-1-lp}
%\| T_1f  \|_{L^q(\R \times (0, T))}  \leq
%c  \max(1, T^{\frac1{2q}} )\| f \|_{ W^{-2, -1}_{q0}(\Rn \times (0, T))}.
%\end{align}
%Using the same method, we have
%\begin{align}
%\label{T-1-*-lp}
%\| T_1f  \|_{L^q(\R \times (0, T))}  \leq
%c  \max(1, T^{\frac1{2q}} )\| f \|_{ W^{-2, -1}_{q0}(\Rn \times (0, T))}.
%\end{align}
%
%
%\end{proof}

Using the result of Lemma \ref{lemma0115}, the following estimate  for $T_2$ and $T_2^*$ can be derived. %See Appendix for the subsequent lemmas.
\begin{lemm}
\label{lem-T}
 Let $1<p<\infty$ and $0\leq \al\leq 2$.
%Let $T_2$ and $T_2^*$ be the operator defined by
%\begin{align}\label{C70K-april7}
%T_2f(x,t) =\int_{-\infty}^t \int_{\Rn} \Ga(x'-y', x_n, t-\tau)
%f(y', \tau) dy' d\tau,%\\
%%T_2^*f(y,s) = \int_s^\infty \int_{\R_+}  \Ga(x' -y', x_n, t-s) f(x,t) dx dt
%\end{align}
%and
%\begin{equation}
%T_2^*f(y,t) :=\int^\infty_s \int_{\Rn} \Ga(x'-y', y_n, t-\tau)
%f(x', \tau) dx' dt.
%\end{equation}
%Let $0< \al <2$.
Then
\begin{align*}
\| T_2g \|_{\dot{W}^{2k,k}_p(\R_+ \times {\mathbb R})}+\| T_2^*g \|_{\dot{W}^{2k,k}_p(\R_+ \times {\mathbb R})} & \leq c \|
g\|_{\dot{B}^{2k-1-\frac{1}{q},k-\frac{1}{2q}}_q(\Rn \times {\mathbb R})}, \quad k =0, \, 1,\\                                                                                                        %\| T_2g \|_{\dot{H}^{\al,\frac{\al}{2}}_q(\R_+ \times {\mathbb R})}+\| T_2^*g \|_{\dot{H}^{\al,\frac{\al}{2}}_q(\R_+ \times {\mathbb R})} & \leq c \| %g\|_{\dot{B}^{\al-1-\frac{1}{q},\frac{\al}{2}-\frac{1}{2q}}_q(\Rn \times {\mathbb R})},0\leq \al\leq 2\\
\| T_2g \|_{\dot{B}^{\al,\frac{\al}{2}}_q(\R_+ \times {\mathbb R})}+\| T_2^*g \|_{\dot{B}^{\al,\frac{\al}{2}}_q(\R_+ \times {\mathbb R})} & \leq c \| g\|_{\dot{B}^{\al-1-\frac{1}{q},\frac{\al}{2}-\frac{1}{2q}}_q(\Rn \times {\mathbb R})},\quad 0<\al< 2.
\end{align*}
\end{lemm}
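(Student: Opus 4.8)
The plan is to reduce the half-space operators $T_2,T_2^*$ to the full-space heat operators $T_1,T_1^*$ already controlled by Lemma \ref{lemma0115}, by representing the boundary datum as a single-layer source carried on $\{x_n=0\}$. Writing $\tilde g=g\otimes\delta_{x_n=0}$ for the tensor of $g$ with the Dirac mass in the normal variable, Fubini's theorem applied to the kernel identity $\int_{\mathbb R}\Ga(x'-y',x_n-y_n,t-s)\delta(y_n)\,dy_n=\Ga(x'-y',x_n,t-s)$ gives
\begin{align*}
T_2 g=\big(T_1\tilde g\big)\big|_{\R_+\times{\mathbb R}},\qquad T_2^* g=\big(T_1^*\tilde g\big)\big|_{\R_+\times{\mathbb R}}.
\end{align*}
Since restriction from $\R\times{\mathbb R}$ to $\R_+\times{\mathbb R}$ is, by the very definition of the anisotropic spaces in Section \ref{notation}, norm-nonincreasing on each homogeneous space in the statement, it suffices to estimate $T_1\tilde g$ and $T_1^*\tilde g$ on the whole of $\R\times{\mathbb R}$. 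Applying Lemma \ref{lemma0115} with $f=\tilde g$ then transfers both inequalities to a single bound on the surface Dirac tensor.

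Concretely, the two lines of Lemma \ref{lemma0115} reduce the assertion to the estimate
\begin{align*}
\|g\otimes\delta_{x_n=0}\|_{\dot B^{\al-2,\frac\al2-1}_q(\R\times{\mathbb R})}\leq c\,\|g\|_{\dot B^{\al-1-\frac1q,\frac\al2-\frac1{2q}}_q(\Rn\times{\mathbb R})},\qquad 0<\al<2,
\end{align*}
together with its $\dot W^{2k-2,k-1}_q$-analogue feeding the first line, $k=0,1$. This is precisely the statement that tensoring with the normal Dirac mass is the adjoint of the anisotropic Dirichlet trace $F\mapsto F|_{x_n=0}$, which (as recalled in Section \ref{preliminary}) costs $\tfrac1q$ of spatial smoothness and, through the parabolic scaling, $\tfrac1{2q}$ of time smoothness; equivalently, the source space above is exactly the anisotropic trace space of the target. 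I would establish it by duality against the trace theorem for $B^{s,\frac s2}_{q'}(\R_+\times{\mathbb R})$, or directly: when the spatial smoothness $\al-1-\tfrac1q\geq0$ the characterizations of Section \ref{notation} let me place $g\otimes\delta_{x_n=0}$ entirely in the spatial summand $L^q({\mathbb R};\dot B^{\al-2}_q(\R))$ and invoke, fiberwise in $t$, the one-dimensional estimate $\|\phi\otimes\delta\|_{\dot B^{\al-2}_q(\R)}\leq c\,\|\phi\|_{\dot B^{\al-1-\frac1q}_q(\Rn)}$ followed by an $L^q$ integration in time.

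With the surface-Dirac estimate established, the cases $k=0,1$ come from the $\dot W^{2k,k}_q$ line of Lemma \ref{lemma0115}, the range $0<\al<2$ from the $\dot B^{\al,\frac\al2}_q$ line (or, equivalently, by real interpolation between $k=0$ and $k=1$ using the interpolation identity for anisotropic Besov spaces quoted in Section \ref{notation}), and the bounds for $T_2^*$ follow verbatim with $T_1^*$ in place of $T_1$, since Lemma \ref{lemma0115} already supplies the same estimates for $T_1^*$. The main obstacle is the surface-Dirac estimate in the regime of negative spatial smoothness $\al-1-\tfrac1q<0$: there the target is a genuine sum space and one cannot simply dump $g\otimes\delta_{x_n=0}$ into the spatial summand, so a joint parabolic Littlewood--Paley decomposition (integrating out the normal frequency $\xi_n$ against the $x_n$-localization of the kernel) or a careful duality argument is needed to keep the spatial and temporal losses parabolically coupled. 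Subsidiary difficulties are the use of homogeneous rather than inhomogeneous spaces, the borderline smoothness values, and verifying that the single-layer identity $T_2 g=(T_1\tilde g)|_{\R_+\times{\mathbb R}}$ holds as an identity of distributions despite $\delta_{x_n=0}$ being supported on the boundary hyperplane.
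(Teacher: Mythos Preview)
Your single-layer reduction $T_2g=(T_1\tilde g)|_{\R_+\times{\mathbb R}}$ with $\tilde g=g\otimes\delta_{x_n=0}$ is correct and, at the $k=0$ endpoint, it is exactly the paper's argument written from the other side of the duality: the paper tests $T_2g$ against $\phi$, rewrites the pairing as $\langle g, T_1^*\phi|_{y_n=0}\rangle$, and then applies Lemma~\ref{lemma0115} plus the trace theorem to $T_1^*\phi$. Your ``surface-Dirac estimate by duality against the trace theorem'' is precisely this.

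The gap is at the other endpoint. To get $k=1$ from Lemma~\ref{lemma0115} you would need $g\otimes\delta_{x_n=0}\in L^q(\R\times{\mathbb R})$ (or $\dot B^{0,0}_q$), which is never true: the Dirac mass in $x_n$ forces $g\otimes\delta\in\dot B^{\tau}_q(\R)$ only for $\tau<-1/q'$. More generally your surface-Dirac estimate
\[
\|g\otimes\delta_{x_n=0}\|_{\dot B^{\al-2,\frac{\al}{2}-1}_q(\R\times{\mathbb R})}\leq c\|g\|_{\dot B^{\al-1-\frac1q,\frac{\al}{2}-\frac1{2q}}_q(\Rn\times{\mathbb R})}
\]
is \emph{false} once $\al\geq 1+\tfrac1q$: in the sum space $L^q_t\dot B^{\al-2}_q(\R)+L^q_x\dot B^{\frac{\al}{2}-1}_q({\mathbb R})$ the second summand consists of honest $L^q$ functions of $x$ and so cannot absorb the $\delta(x_n)$ singularity, while the first summand rejects it because $\al-2\geq -1/q'$. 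In particular you have the ``easy'' and ``hard'' ranges backwards: the fiberwise argument you propose for $\al-1-\tfrac1q\geq 0$ is exactly the range where it cannot work, whereas the duality route succeeds precisely when $\al<1+\tfrac1q$.

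The paper therefore does \emph{not} obtain the $k=1$ endpoint through the single-layer reduction. For $T_2$ it simply quotes the classical parabolic estimate $\|T_2g\|_{\dot W^{2,1}_q}\leq c\|g\|_{\dot B^{1-\frac1q,\frac12-\frac1{2q}}_q}$; for $T_2^*$ it first gets tangential and time derivatives from the $k=0$ bound applied to $D_{x'}^2g$ and $D_tg$, then recovers $D_{x_n}^2T_2^*g$ from the heat equation $D_{x_n}^2T_2^*g=-D_sT_2^*g-\Delta'T_2^*g$ together with elliptic regularity using $T_2^*g|_{y_n=0}=0$. Only after both endpoints are in hand does real interpolation give $0<\al<2$. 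Your plan is fine up to $\al<1+\tfrac1q$, but to close the lemma you need an independent argument for $k=1$ along these lines.
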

%Observe that $T_2^*$ is not adjoint operator of $T_2^*$.
The above lemma  will be useful for the proof of Theorem \ref{Rn-1} and also for the estimate of $\Gamma_t*h|_{x_n=0}$.

\begin{lemm}
\label{proheat1}
Let $1<q<\infty$. Define the heat operator $T_0$ by $T_0h=\int_{\R}\Gamma(x-y,t)h(y)dy$.
 %and $0\leq  \al \leq 2$. Let  $h\in {B}^{\alpha-\frac{2}{q}}_q(\R) $. % if $\al>\frac{1}{q}$ and $h\in \dot{B}^{\alpha-\frac{2}{q}}_q(\R)\cap \dot{B}^{-\frac{2}{q}}_q(\R) $ if $0\leq \al\leq\frac{1}{q}$.
% ($h\in \dot B^{\alpha-\frac{2}{q}}_q(\R)$)
%define
%\begin{align*}
%u(x,t)=%\left\{\begin{array}{l}
%%<\Ga(x-\cdot,t),h>_{\R} \mbox{ if }\al<\frac{2}{q},\\
%\int_{\R}\Ga(x-y,t)h(y)dy
%%\mbox{ if }\al\geq \frac{2}{q},\end{array}\right.
%\end{align*} in generalized sense.
%where $<\cdot ,\cdot>_{\R}$ is duality paring between $\dot{B}^{\alpha-\frac{2}{q}}_q(\R)$ and $\dot{B}^{-\alpha+\frac{2}{q}}_{q'}(\R)$.
%Let $Y(\al,q)=
%{B}^{\al,\frac{\al}{2}}_q(\R\times (0,T))\mbox{ if }0<\al<2,$ and $Y(\al,q)=
%L^q(\R\times (0,T))\mbox{ if }\al=0.$
Then,
%$\Gamma_t*h\in {B}^{\al,\frac{\al}{2}}_q(\R\times (0,T))$ if $0<\al<1$ and $\Gamma_t*\in L^q(\R\times (0,T))$ if $\al=0$. More precisely, we have that
%%($u\in \dot B^{\al,\frac{\al}{2}}_q(\R\times {\mathbb R}_+)$)
%with
%\begin{align}
%\|T_0h\|_{\dot{B}^{\al,\frac{\al}{2}}_q(\R\times (0,T))}\leq
%c\|f\|_{\dot{B}^{\alpha-\frac{2}{q}}_q(\R)},%,0<\al<2\\
%%\|u\|_{W^{2,1}_q(\R\times (0,T))}\leq c\|f\|_{\dot{B}^{2-\frac{2}{q}}_q(\R)}\\
%%\|u\|_{L^q(\R\times (0,T))}\leq c\|f\|_{\dot{B}^{-\frac{2}{q}}_q(\R)},
%%\\
%%\|u\|_{L^q(\R\times (0,T))}\leq cT^{\frac{1}{q}}\|f\|_{L^q(\R)}%, \ \|u\|_{L^q(\R\times (0,T)}\leq cT^{\frac{\al}{2}}\|h\|_{\dot{B}^{\al-\frac{2}{q}}_q(\R))}.
%\end{align}
%if $\al>\frac{2}{q}$, then
\begin{align}
\|T_0h\|_{L^q(\R\times (0,T))}&\leq
c\max\{1,T^{\frac{1}{q}}\}\|h\|_{{B}^{-\frac{2}{q}}_q(\R)},\\%\|T_0h\|_{H^{\al,\frac{\al}{2}}_q(\R\times (0,T))}&\leq
%\end{align}
%and
%\begin{align}
\|T_0h|_{x_n=0}\|_{B^{-\frac{1}{q},-\frac{1}{2q}}_{q}(\Rn\times (0,T))}&\leq c
\max\{1, T^{\frac1{2q}}\}\|h\|_{B^{-\frac{2}{q}}_q(\R)}.%\\
\end{align}

\end{lemm}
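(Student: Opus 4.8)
The plan is to prove each inequality by first establishing its scale-invariant (homogeneous) version and then recovering the finite-time factors $\max\{1,T^{\frac1q}\}$ and $\max\{1,T^{\frac1{2q}}\}$ through the splitting $B^{-\frac2q}_q(\R)=L^q(\R)+\dot B^{-\frac2q}_q(\R)$, valid since the exponent $-\frac2q$ is negative. Accordingly I write $h=h_0+h_1$ with $\|h_0\|_{L^q(\R)}+\|h_1\|_{\dot B^{-2/q}_q(\R)}\le c\,\|h\|_{B^{-2/q}_q(\R)}$ and treat the two pieces separately in both estimates.

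For the first inequality the piece $h_1$ is handled by the heat-semigroup characterization of homogeneous Besov spaces: since $-\frac2q<0$, one has $\|T_0h_1\|_{L^q(\R\times(0,\infty))}=\big(\int_0^\infty\|e^{t\De}h_1\|_{L^q(\R)}^q\,dt\big)^{1/q}\simeq\|h_1\|_{\dot B^{-2/q}_q(\R)}$, and restricting the time integral to $(0,T)$ only decreases the left-hand side. For $h_0\in L^q(\R)$ I use that the heat semigroup is an $L^q$-contraction, $\|e^{t\De}h_0\|_{L^q(\R)}\le\|h_0\|_{L^q(\R)}$ for every $t$, whence $\|T_0h_0\|_{L^q(\R\times(0,T))}\le T^{1/q}\|h_0\|_{L^q(\R)}$. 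Adding the two contributions and absorbing constants produces the factor $\max\{1,T^{\frac1q}\}$.

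For the second inequality the $L^q$-piece $w_0=T_0h_0|_{x_n=0}$ is estimated directly from the Gaussian kernel: using $\Ga(x'-y',-y_n,t)=\Ga(x'-y',y_n,t)$ together with Young's inequality in $x'$ and Hölder's inequality in $y_n$ (the normal Gaussian factor $\ga(y_n,t)$ has $\|\ga(\cdot,t)\|_{L^{q'}}\simeq t^{-1/(2q)}$), one gets $\|w_0(\cdot,t)\|_{L^q(\Rn)}\le c\,t^{-\frac1{2q}}\|h_0\|_{L^q(\R)}$, so that $\|w_0\|_{L^q(\Rn\times(0,T))}\le cT^{\frac1{2q}}\|h_0\|_{L^q(\R)}$; since $L^q(\Rn\times(0,T))\hookrightarrow B^{-1/q,-1/(2q)}_q(\Rn\times(0,T))$ for these negative exponents, this yields the $\max\{1,T^{\frac1{2q}}\}$ factor. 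For the homogeneous piece $w_1=T_0h_1|_{x_n=0}$ I pass to the dual formulation with predual $\dot B^{1/q,1/(2q)}_{q'}(\Rn\times(0,\infty))$. Exploiting the evenness of $\Ga$ in the normal variable, a direct computation identifies the boundary pairing with a bulk pairing, $\langle w_1,\phi\rangle=\langle h_1,\,T_2^*\phi(\cdot,0)\rangle_{\R}$, where $T_2^*$ is the operator of Lemma \ref{lem-T} and $T_2^*\phi(\cdot,0)$ is its time-zero slice (defined on all of $\R$ by even reflection in $y_n$). Thus it suffices to prove the operator bound $\|T_2^*\phi(\cdot,0)\|_{\dot B^{2/q}_{q'}(\R)}\le c\,\|\phi\|_{\dot B^{1/q,1/(2q)}_{q'}(\Rn\times(0,\infty))}$, which I have checked is consistent with the parabolic scaling of all the spaces involved.

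The main obstacle is exactly this time-slice estimate. Lemma \ref{lem-T} controls the full space-time norm $\|T_2^*\phi\|_{\dot B^{\al,\al/2}_{q'}(\R_+\times\mathbb{R})}$, and the naive route---taking $\al=2$ and applying the anisotropic trace theorem at $s=0$---would demand of $\phi$ a time-regularity of order $\tfrac12+\tfrac1{2q}$, far more than the order $\tfrac1{2q}$ actually available. The point is that $T_2^*\phi$ is not a generic element of $\dot B^{2,1}_{q'}$ but a caloric function, for which the normal and tangential regularities are coupled through the heat equation; this coupling lets the time-zero trace gain the missing half-derivative. I therefore expect to prove the time-slice bound by a direct estimate on the single-layer heat potential, decomposing the isotropic norm $\dot B^{2/q}_{q'}(\R)$ into its tangential and normal directional components and using the explicit factor $\ga(y_n,t)$ together with the building-block estimates of Lemmas \ref{lem-T} and \ref{lemma0115}. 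Once this is in hand, duality gives the homogeneous bound $\|w_1\|_{\dot B^{-1/q,-1/(2q)}_q(\Rn\times(0,\infty))}\le c\,\|h_1\|_{\dot B^{-2/q}_q(\R)}$, and combining it with the $L^q$-piece completes the proof.
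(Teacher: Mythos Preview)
Your treatment of the first inequality is correct. For the homogeneous piece you invoke the heat-semigroup characterization $\|h_1\|_{\dot B^{-2/q}_q}\simeq\big(\int_0^\infty\|e^{t\De}h_1\|_{L^q}^q\,dt\big)^{1/q}$, whereas the paper argues by duality through $T_1^*$ (Lemma~\ref{lemma0115} together with the time-trace theorem); both routes are standard.

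For the second inequality your setup---the duality identity $\langle T_0h|_{x_n=0},\phi\rangle=\langle h,T_2^*\phi|_{s=0}\rangle$ and the splitting $B^{-2/q}_q=L^q+\dot B^{-2/q}_q$---matches the paper exactly. The obstacle you raise, however, is not real: it comes from a typo in the statement of Lemma~\ref{lem-T}, where the time index on the right-hand side should be $k-\tfrac12-\tfrac1{2q}$ rather than $k-\tfrac1{2q}$ (so that the space is genuinely $\dot B^{s,s/2}_q$ with $s=2k-1-\tfrac1q$, as parabolic scaling dictates). This corrected form is the classical single-layer heat estimate $\|T_2^* g\|_{\dot W^{2,1}_q}\le c\,\|g\|_{\dot B^{1-1/q,\,1/2-1/(2q)}_q}$, and it is what the paper's own proof of Lemma~\ref{lem-T} actually establishes. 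Applying it with exponent $q'$ gives
\[
\|T_2^*\phi\|_{\dot W^{2,1}_{q'}(\R_+\times{\mathbb R})}\le c\,\|\phi\|_{\dot B^{1-1/q',\,1/2-1/(2q')}_{q'}(\Rn\times{\mathbb R})}=c\,\|\phi\|_{\dot B^{1/q,\,1/(2q)}_{q'}(\Rn\times{\mathbb R})},
\]
since $1-\tfrac1{q'}=\tfrac1q$ and $\tfrac12-\tfrac1{2q'}=\tfrac1{2q}$. The anisotropic time-trace theorem then yields $\|T_2^*\phi|_{s=0}\|_{\dot B^{2/q}_{q'}(\R_+)}\le c\,\|T_2^*\phi\|_{\dot W^{2,1}_{q'}}$, which is exactly the bound you needed. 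In short, the ``naive route'' you dismiss is precisely what the paper does, and your proposed direct caloric-regularity argument---while plausible---is unnecessary.
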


%\subsection{Estimate of the heat operator II}

\begin{lemm}
\label{propheat2}
Let    $1<p, q<\infty$ with $1-(n+2)(\frac{1}{p}-\frac{1}{q})>0$.
For $f\in %Z(\be,p)=
%\left\{\begin{array}{l}
L^p(\R\times (0,T))%\mbox{ if }\be>0\\
%L^p(\R \times (0,T))\mbox{ if }\be=0\end{array}\right\}
$, define $u$ by  $u(x,t)=%\left\{\begin{array}{l}
%<f,D_x \Ga(x-\cdot,t-\cdot)>_{\R\times (0,T)} \mbox{ if }\be<0,\\
\int^t_0\int_{\R}D_x \Ga(x-y,t-s)f(y,s)dyds.$ Let $\al_1=1-(n+2)(\frac{1}{p}-\frac{1}{q}).$
%\mbox{ if }\be\geq 0,\end{array}\right.$
Then $u\in {W}^{1,\frac{1}{2}}_{p0}(\R\times (0,T))$ with
\begin{align*}
\|u\|_{{W}^{1,\frac{1}{2}}_{p0}(\R\times (0,T))}
&\leq c\max\{1,T^{\frac{\al_1}{2}}\}%cT^{
%\frac{1}{2}+\frac{\be-\al}{2}-\frac{n+2}{2}(\frac{1}{p}-\frac{1}{q})}
\|f\|_{L^p(\R\times (0, T))},\\
\|u\|_{L^q(\R\times (0,T)}
&\leq %cT^{
%\frac{1}{2}+\frac{\be-\al}{2}-\frac{n+2}{2}(\frac{1}{p}-\frac{1}{q})}
cT^{\frac{\al_1}{2}}\|f\|_{L^p(\R\times (0, T))}.
\end{align*}
Moreover, $u|_{x_n=0} \in B^{-\frac1q, -\frac1{2q} }_{q0}(\Rn\times (0,T)) $ with  %.\end{align*}
%\[
%+\Big( +c\max\{1,T^{\frac{-\al+\be}{2}}\}T^{\frac{1}{2}-\frac{n}{2}(\frac{1}{p}-\frac{1}{q})+\frac{1}{q}} %+cT^{
%%\frac{1}{2}+\frac{\be-\al}{2}-\frac{n+2}{2}(\frac{1}{p}-\frac{1}{q})}
%\|f|_{t=0}\|_{{B}^{\be-\frac{2}{p}}_p(\R)}\mbox{ if }\be>\frac{2}{p}\Big),
%%+cT^{\frac{1}{2}-\frac{n+2}{2}(\frac{1}{p}-\frac{1}{q})}\|f\|_{L^p(\R \times (0,T))}.
%\]
%Moreover, $u|_{x_n=0}\in B^{\al-\frac{1}{q},\frac{\al}{2}-\frac{1}{2q}}_q(\Rn\times (0,T))$  for $0\leq \al\leq \frac{2}{q}$
%with
%\[
%\|u|_{x_n=0}\|_{ \dot B^{\al-\frac{1}{q},\frac{\al}{2}-\frac{1}{2q}}_q(\Rn\times (0,T))}\leq cT^{\frac{1-\al+\be}{2}-\frac{n+2}{2}(\frac{1}{p}-\frac{1}{q})}
%\|f\|_{\dot{B}^{\be,\frac{\be}{2}}_p(\R \times (0,T))}.%\]
%%\[%+cT^{\frac{1}{2}-\frac{n+2}{2}(\frac{1}{p}-\frac{1}{q})}\|f\|_{L^p(\R \times (0,T))}.
%\]
%\[+\Big( + cT^{\frac{1-\al+\be}{2}-\frac{n}{2}(\frac{1}{p}-\frac{1}{q})+\frac{1}{q}}\|f|_{t=0}\|_{\dot{B}^{\be-\frac{2}{p}}_p(\R)}\mbox{ if }\be>\frac{2}{p}\Big).
%\]
%
%
%
%
%In addition, if   $\al_1>0$, then
%%$u|_{x_n=0}\in B^{\al-\frac{1}{q},\frac{\al}{2}-\frac{1}{2q}}_{q}(\Rn \times (0,T))$ with
%\begin{align*}
%\|u|_{x_n=0}\|_{  B^{\al-\frac{1}{q},\frac{\al}{2}-\frac{1}{2q}}_{q}(\Rn\times (0,T))}\leq c\max\{1,T^{\frac{\al_1-1}{2}}\}T^{\frac{(\al_1-1)(1-\be)}{2}}
%\|f\|_{L^p(0,T;B^\be_p(\R))}, \al>\frac{1}{q},\
\begin{align*}
\|u|_{x_n=0}\|_{  B^{-\frac1q, -\frac1{2q} }_{q0}(\Rn\times (0,T))} &\leq cT^{\frac{\al_1}2}
\|f\|_{L^p(\R\times (0, T))}.%\\
%\|u|_{x_n=0}\|_{  B^{\al-\frac{1}{q},\frac{\al}{2}-\frac{1}{2q}}_{q}(\Rn\times (0,T))}\leq c\max\{1,T^{\frac{\al}{2}}\}T^{\frac{1-\al}{2}-\frac{n+2}{2}(\frac{1}{p}-\frac{1}{q})}
%\|f\|_{L^p(\R\times (0,T))}, \quad 0\leq \al  < \frac{1}{q},
\end{align*}
%where the additional condition  $   \al-1+(n+2)(\frac{1}{p}-\frac{1}{q}) < -\frac{1}{q}$ for $\al <  \frac{1}{q}$ must be satisfied.

\end{lemm}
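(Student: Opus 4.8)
The plan is to split each estimate into a scale-invariant anisotropic bound, coming from Lemmas \ref{lemma0115} and \ref{lem-T}, and the extraction of the power $T^{\frac{\al_1}2}$ from the finiteness of the time interval, which is possible precisely because $\al_1>0$. First I would extend $f$ by zero to $\R\times{\mathbb R}$; since the inner integral runs over $s\in(0,t)$, the function $u$ is automatically supported in $\{t\geq 0\}$, so that the zero-extension to $t<0$ is free and the $W^{1,\frac12}_{p0}$ and $B^{-\frac1q,-\frac1{2q}}_{q0}$ norms are the natural ones. Writing $u=D_xT_1f$ (the spatial derivative commutes with the heat potential $T_1$), the three bounds reduce to mapping properties of $T_1$ and, on the dual side, of $T_2^*$.

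For the $W^{1,\frac12}_{p0}$ estimate I would apply Lemma \ref{lemma0115} with $k=1$ to place $T_1f$ in $\dot W^{2,1}_p(\R\times{\mathbb R})$, and then use that one spatial derivative lowers the anisotropic order by one, so that $D_x\colon\dot W^{2,1}_p\to\dot W^{1,\frac12}_p$ is bounded (equivalently, the symbol $\frac{i\xi_j(|\xi|+|\tau|^{\frac12})}{i\tau+|\xi|^2}$ governing $D_xT_1\colon L^p\to\dot W^{1,\frac12}_p$ is parabolically homogeneous of degree zero, hence an $L^p$-multiplier). Combined with the lower endpoint of Lemma \ref{lemma0115} and real interpolation, this yields the scale-invariant bound $\|u\|_{\dot W^{1,\frac12}_p(\R\times{\mathbb R})}\leq c\|f\|_{L^p}$, which produces the constant $1$ in $\max\{1,T^{\frac{\al_1}2}\}$. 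The remaining inhomogeneous part $\|u\|_{L^p(\R\times(0,T))}$ is of lower order and, estimated as in the next paragraph with $q$ replaced by $p$, contributes a positive power of $T$; the two contributions are then absorbed into the factor $\max\{1,T^{\frac{\al_1}2}\}$.

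For the $L^q$ estimate I would use the semigroup bound $\|\na e^{\tau\De}g\|_{L^q(\R)}\leq c\,\tau^{-\be}\|g\|_{L^p(\R)}$ with $\be=\frac12+\frac n2(\frac1p-\frac1q)$, giving, with $F(s):=\|f(\cdot,s)\|_{L^p(\R)}$,
\[
\|u(\cdot,t)\|_{L^q(\R)}\leq c\int_0^t(t-s)^{-\be}F(s)\,ds .
\]
The hypothesis $\al_1>0$ is exactly the statement that $1-\be>\frac1p-\frac1q$, i.e. the one-dimensional Riemann--Liouville operator on the right is subcritical as a map $L^p(0,T)\to L^q(0,T)$; Young's inequality (or the one-dimensional Hardy--Littlewood--Sobolev inequality) on the interval $(0,T)$ then gives operator norm at most $c\,T^{(1-\be)-(\frac1p-\frac1q)}=c\,T^{\frac{\al_1}2}$, whence $\|u\|_{L^q(\R\times(0,T))}\leq cT^{\frac{\al_1}2}\|f\|_{L^p}$.

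The trace estimate is the delicate point and I expect it to be the main obstacle. Because $-\frac1q<0$, the trace $u|_{x_n=0}$ is only defined distributionally, and I would use the duality $B^{-\frac1q,-\frac1{2q}}_{q0}(\Rn\times(0,T))=\big(B^{\frac1q,\frac1{2q}}_{q'}(\Rn\times(0,T))\big)'$, testing against $\psi$ in the predual. Moving the convolution off $\psi$ by Fubini, the boundary pairing becomes
\[
\langle u|_{x_n=0},\psi\rangle=\int_0^T\!\!\int_{\R}f(y,s)\,(D_xT_2^*\psi)(y,s)\,dy\,ds ,
\]
where $D_xT_2^*\psi$ is understood as its even extension across $\{y_n=0\}$, so that $|\langle u|_{x_n=0},\psi\rangle|\leq\|f\|_{L^p}\,\|D_xT_2^*\psi\|_{L^{p'}(\R_+\times(0,T))}$. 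It then remains to bound $\|D_xT_2^*\psi\|_{L^{p'}(\R_+\times(0,T))}$ by $cT^{\frac{\al_1}2}\|\psi\|_{B^{\frac1q,\frac1{2q}}_{q'}}$, which I would obtain from Lemma \ref{lem-T} applied to $T_2^*$ (with exponent $q'$), followed by an anisotropic Sobolev embedding and a Hölder estimate in time on $(0,T)$, exactly as in the proof of the trace bound in Lemma \ref{proheat1}. The careful matching of the (generally off-diagonal) anisotropic Besov indices in this last step, and the bookkeeping of the subcritical gap that converts the finite interval into the sharp power $T^{\frac{\al_1}2}$, are where the real work lies.
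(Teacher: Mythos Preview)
Your first two estimates follow the paper's argument essentially verbatim: the paper also writes $u=D_xT_1\tilde f$, invokes Lemma~\ref{lemma0115} at level $k=1$ for the $\dot W^{1,\frac12}_p$ bound, and obtains the $L^q$ bound with factor $T^{\al_1/2}$ by Young's inequality on the kernel $D_x\Ga$.

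For the trace bound, however, the paper does \emph{not} use duality. It argues directly: choose $q_1=\frac{(n+1)q}{n+2}$, so that the parabolic scaling matches ($-\frac{n+1}{q_1}=-\frac1q-\frac{n+1}{q}$), estimate $u|_{x_n=0}$ in $L^{q_1}(\Rn\times(0,T))$ by Young's inequality, and then use the embedding $L^{q_1}(\Rn\times(0,T))\hookrightarrow B^{-\frac1q,-\frac1{2q}}_{q0}(\Rn\times(0,T))$. The exponent works out exactly: $\frac{\be_1}{2}-\frac1{2q_1}=\frac{\al_1}{2}$ with $\be_1=1-(n+2)(\frac1p-\frac1{q_1})$. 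This is short and completely elementary.

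Your duality route via $T_2^*$ is in the spirit of the paper's proof of Lemma~\ref{proheat1}, but note that even there the power of $T$ does \emph{not} come from the duality step; it comes from a separate Young's-inequality bound on the $L^q$ part of the data, the duality handling only the scale-invariant homogeneous piece. More concretely, your final step ``anisotropic Sobolev embedding and a H\"older estimate in time on $(0,T)$'' does not produce the factor $T^{\al_1/2}$ as written: Lemma~\ref{lem-T} gives $D_xT_2^*\psi\in\dot W^{1,\frac12}_{q'}(\R_+\times{\mathbb R})$, Sobolev embedding lands you in $L^{p_0'}$ with the \emph{critical} $\frac1{p_0'}=\frac1{q'}-\frac1{n+2}$, and since the spatial domain $\R_+$ is unbounded, H\"older in time alone cannot pass from $L^{p_0'}$ to the strictly larger $L^{p'}$ space you need. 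To close the argument you would have to bound $\|D_xT_2^*\psi\|_{L^{p'}(\R_+\times(0,T))}$ by a direct kernel estimate, which is precisely the paper's Young's-inequality computation transported to the dual side. So the duality detour buys nothing here; the paper's direct $L^{q_1}$-plus-embedding argument is both simpler and avoids the gap.
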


\section{\bf Stokes equations \eqref{maineq-stokes} with $f=0$ and $h=0$}

\label{zero}

\setcounter{equation}{0}

Let $(w,r)$ be the solution of the equations
 \begin{align}\label{maineq1}
\begin{array}{l}\vspace{2mm}
w_t - \De w + \na r =0, \qquad div \, w=0, \mbox{ in }
 \R_+\times (0,T),\\
\hspace{30mm}w|_{t=0}= 0, \qquad  w|_{x_n =0} = G.
\end{array}
\end{align}
% Then,
% \begin{equation}
% u=w+\nabla \phi+v+V\mbox{ and } p=r-\phi_t+{\mathbb Q}\tilde{f}
% \end{equation}
%  satisfies formally  the nonstationary following Stokes equations  \eqref{maineq1}.%
Let
\begin{align*}
K_{ij}(x,t) & = -2 \delta_{ij}D_{x_n}  \Ga(x,t)  +4 D_{x_j}\int_0^{x_n} \int_{\Rn}  D_{z_n}  \Ga(z,t)  D_{x_i} N(x-z)  dz,%\\
%% {K}_{ij}^{(0)} (x,t) & =D_{x_j}\int_0^{x_n} \int_{\Rn}  D_{z_n}  \Ga(z,t)  D_{x_i} N(x-z)  dz,\\
%\pi_j (x,t) & =-2\delta(t)D_{x_j} D_{x_n}N(x)
%      +4D_{x_j}D^2_{ x_n}A(x,t)
%+4 D_t D_{x_j} A(x,t),\\
% A(x,t) & =\int_{\Rn}\Ga(z',0,t)N(x'-z',x_n)dz',
\end{align*}
where $  \Ga$ and $N$ are the fundamental solutions of the heat equation and the Laplace equation in $\R$, respectively, that is,
\[
 \Gamma(x,t)=\left\{\begin{array}{ll} \vspace{2mm}
 \frac{c}{ (2\pi t)^{\frac{n}{2}}}e^{-\frac{|x|^2}{4t}}&\mbox{ if }t>0,\\
 0& \mbox{ if }t\leq 0,
 \end{array}\right. \quad \mbox{and} \quad    N(x) = \left\{\begin{array}{ll}
 \vspace{2mm}
  \frac{1}{\om_n (2-n)|x|^{n-2}}&\mbox{ if }n\geq 3,\\
 \frac{1}{2\pi}\ln |x|&\mbox{ if }n=2.\end{array}\right.
 \]
In \cite{sol1}, an explicit formula for $w$
 %the solution
 of the Stokes equations \eqref{maineq1} with boundary data $G=(G', 0)$  is
 obtained by
\begin{align}\label{simple}
w_i(x,t)& = \sum_{j=1}^{n-1}< K_{ij}( x'-\cdot,x_n,t-\cdot),G_j>_{\Rn\times {\mathbb R}_+}.%, \\
%\label{simple1}
%p(x,t)& = \sum_{j=1}^{n-1}\int_0^t \int_{\Rn} \pi_j(x'-y',x_n,t-s) g_j(y',s) dy'ds
%.
\end{align}
Here $<\cdot,\cdot>$ is a duality paring between  $B^{-\al+\frac{1}{q},-\frac{\al}{2}+\frac{1}{2q}}_q(\Rn\times (0,T))$ and $B^{\al-\frac{1}{q},\frac{\al}{2}-\frac{1}{2q}}_{q0}(\Rn\times {\mathbb R}_+)$ (if  $G$ is a function, then  $< K_{ij}( x'-\cdot,x_n,t-\cdot),G_j>_{\Rn\times {\mathbb R}_+}=\int_0^t \int_{\Rn} K_{ij}( x'-y',x_n,t-s)G_j(y',s) dy'ds).$
 %In \cite{CJ}, the following is shown.
 \begin{theo}\label{Rn-1}
Let $1< q<\infty$ and $0<T<\infty$.
Let  $G \in B^{-\frac{1}{q},-\frac{1}{2q} }_{q0} ({\mathbb R}^{n-1} \times {\mathbb R}_+)
%\cap \tilde{ \dot B}^{-\frac{1}{p},-\frac{1}{2q} }_{q} ({\mathbb R}^{n-1} \times (0,T])
$ with
$G_n=0$. %, where %$X^{\al,q}(\Rn\times (0,T))= {B}^{\al-\frac{1}{q},\frac12\al-\frac{1}{2q} }_{q} ({\mathbb R}^{n-1} \times (0,T))$ if $\al>\frac{1}{q}$,
%%$X^{\al,q}(\Rn\times (0,T))= \dot{B}^{\al-\frac{1}{q},\frac12\al-\frac{1}{2q} }_{q} ({\mathbb R}^{n-1} \times (0,T))\cap
%%\dot{B}^{-\frac{1}{q},-\frac{1}{2q} }_{q} ({\mathbb R}^{n-1} \times (0,T))$ if $0\leq \al\leq \frac{1}{q}$ and
%$\mbox{\r{B}}^{\al-\frac{1}{q},\frac12\al-\frac{1}{2q} }_{q} ({\mathbb R}^{n-1} \times (0,T))$ is the set of functions (distributions) whose extension to $\Rn\times (-\infty,T)$ are in $B^{\al-\frac{1}{q},\frac12\al-\frac{1}{2q} }_{q} ({\mathbb R}^{n-1} \times (0,T))$ .
%% $\dot{B}^{\al-\frac{1}{q},\frac12\al-\frac{1}{2q} }_{q} ({\mathbb R}^{n-1} \times (0,T))\cap
%\dot{B}^{-\frac{1}{q},-\frac{1}{2q} }_{q} ({\mathbb R}^{n-1} \times (0,T))$.%, and assume that $g|_{t=0}=0$ for $\al > \frac2p$.
 Let $w$ be the vector field defined by \eqref{simple}. %$\dot{B}^{\al,\frac\al 2 }_{q}({\mathbb R}^n_+\times (0,T))$
%%Then \begin{align}
%%\label{m3}
%%\int_0^\infty \int_{{\mathbb R}^n_+ }(x_n\wedge t^{\frac{1}{2}})^{-\al q+( k_0 +l_0 + 2m_0 )q}
%%                        |D_{x_n}^{l_0} D_{x'}^{k_0} D_t^{m_0}  u(x,t)|^qdxdt
%%\leq c(\alpha,q) \| g\|^q_{\dot{B}^{\al-\frac{1}{q}, \frac\al2-\frac{1}{2q} }_q(\Rn \times {\mathbb R}_+ )}
%%\end{align}
%% for any $k_0,l_0,m_0$ with  $ k_0 + l_0 + 2m_0 \geq 1$.\\
%\begin{align*}
%\| w\|_{ \dot{B}^{\al,\frac\al 2 }_{q}({\mathbb R}^n_+\times (0,T))}
% \leq c\|G\|_{\dot B^{\al-\frac{1}{q},\frac\al 2-\frac{1}{2q}}_{q}({\mathbb
%R}^{n-1} \times {\mathbb R}_+)}.
% \end{align*}
%
% Moreover, if $g \in \mbox{ \r{B}}^{\al-\frac{1}{q},\frac12\al-\frac{1}{2q} }_{q} ({\mathbb R}^{n-1} \times (0,T))$, then
% $u\in
Then $w\in L^q(\R_+\times (0,T))$ with
 \begin{align*}
\| w\|_{ L^q({\mathbb R}^n_+\times (0,T))}
 \leq c\max\{1,T^{\frac{1}{2q}}\}\|G\|_{B^{-\frac{1}{q},-\frac{1}{2q}}_{q0}({\mathbb    R}^{n-1} \times (0,T))}.
 %\| w\|_{ W^{2k,k}_q({\mathbb R}^n_+\times (0,T))}
% \leq c\max\{1,T^{\frac{1}{2q}}\}\|G\|_{B^{2k-\frac{1}{q},k-\frac{1}{2q}}_{q0}({\mathbb
%R}^{n-1} \times {\mathbb R}_+)},k=0,1.
 \end{align*}
% and
% %\begin{align*}
%%\| w\|_{ L^{q}({\mathbb R}^n_+\times (0,T))}
%% \leq c\|G\|_{ \dot B^{\al-\frac{1}{q},\frac\al 2-\frac{1}{2q}}_{q}({\mathbb
%%R}^{n-1} \times (0,T))}+c\|G\|_{\dot B^{-\frac{1}{q},-\frac{1}{2q}}_{q}({\mathbb
%%R}^{n-1} \times (0,T))}.
%% \end{align*}
%%and
% \begin{align*}
%\| w\|_{ L^{q}({\mathbb R}^n_+\times (0,T))}
% \leq cT^{\frac{1}{2q}}\|G\|_{ L^{q}({\mathbb
%R}^{n-1} \times (0,T))}.
% \end{align*}

%$\bullet$
%and
%\begin{align}\label{1106}
%\int_0^T \int_{{\mathbb R}^{n}_+} |  u|^qdx' dx_n dt
%\leq c(\alpha,q,T) \| g \|^q_{B^{\al-\frac{1}{q},\frac{\al}{2}-\frac{1}{2q}}_q({\mathbb R}^{n-1}\times (0,T))}.
%\end{align}
%Here, if $q>\frac{n+2}{n+1}$ then $ c(\alpha,q,T)= c(\alpha,p).$
\end{theo}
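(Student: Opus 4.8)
The plan is to split the kernel $K_{ij}$ in \eqref{simple} into its two constituents and to estimate the two resulting pieces of $w$ separately, writing $w_i=w_i^{(1)}+w_i^{(2)}$. Here $w^{(1)}$ is generated by the heat term $-2\delta_{ij}D_{x_n}\Ga$ and $w^{(2)}$ by the pressure-correction term $4D_{x_j}\int_0^{x_n}\int_{\Rn}D_{z_n}\Ga(z,t)D_{x_i}N(x-z)\,dz$. Since $G_n=0$, only the tangential components $G_j$, $j\le n-1$, enter, and the first term contributes only to the tangential components of $w$: for $i\le n-1$ one has $w_i^{(1)}(x,t)=-2\int_0^t\int_{\Rn}D_{x_n}\Ga(x'-y',x_n,t-s)G_i(y',s)\,dy'\,ds$, which is precisely the parabolic Poisson (heat) extension of the boundary datum $G_i$. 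First I would record that this extension operator sends the boundary space $\dot B^{-\frac1q,-\frac1{2q}}_q(\Rn\times{\mathbb R})$ into $L^q(\R_+\times{\mathbb R})$; this boundary-to-interior bound is of the same nature as the $T_2$ estimate of Lemma \ref{lem-T} and is proved by the same kernel estimates, the normal derivative being compensated by the extra Gaussian decay of $D_{x_n}\Ga$, and its scaling is dictated by the trace theorem recalled at the start of Section \ref{preliminary}, which matches a boundary gain of $\tfrac1q$ in space and $\tfrac1{2q}$ in time.

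The term $w^{(2)}$ is the main obstacle. The guiding observation is that, in the spatial variables, convolution with $D_{x_i}N$ is the composition of the Riesz transform $R_i$ with the order $-1$ operator $|\na|^{-1}$, since its symbol is $-i\xi_i/|\xi|^2$. Formally, then, the outer derivative $D_{x_j}=R_j|\na|$ cancels the factor $|\na|^{-1}$, and $w^{(2)}_i$ becomes $R_jR_i$ applied to a genuine heat potential, namely the slab integral $\int_0^{x_n}D_{z_n}\Ga(\,\cdot\,,t)\,dz_n$ convolved tangentially against $G_j$. Since the Riesz operators are bounded on the anisotropic Besov scale by \eqref{propriesz2}, after an even or odd reflection from $\R_+$ to $\R$ they may be discarded at the cost of a constant, and the problem is reduced to the remaining heat potential, which is estimated by Lemmas \ref{lemma0115} and \ref{lem-T}. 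The difficulty hidden in this heuristic is precisely that the slab integration $\int_0^{x_n}dz_n$ and the full-space singular kernel $D_{x_i}N(x-z)$ do not combine into a clean spatial convolution, so the multiplier cancellation must be justified by hand; I would carry this out by integrating by parts in $z_n$, where the boundary term at $z_n=0$ reproduces an expression of the type already treated in $w^{(1)}$, while the contribution at $z_n=x_n$ and the interior term are controlled by the explicit Gaussian decay of $D_{z_n}\Ga$, thereby reducing everything to full-space heat estimates.

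Finally, I would assemble the two bounds, sum over $1\le i,j\le n-1$, and restrict to the finite interval $(0,T)$. The factor $\max\{1,T^{\frac1{2q}}\}$ appears only in this last step: Lemmas \ref{lemma0115} and \ref{lem-T} are homogeneous estimates over the whole time line, whereas $G\in B^{-\frac1q,-\frac1{2q}}_{q0}(\Rn\times(0,T))$ is by definition the zero extension to $\Rn\times(-\infty,T)$, so its homogeneous Besov norm is controlled by the inhomogeneous one up to a power of $T$ whose exponent equals the parabolic dimension $\tfrac1{2q}$ carried by the time index $-\tfrac1{2q}$; the maximum with $1$ absorbs both the regimes $T\le1$ and $T\ge1$. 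As indicated, the technical heart of the argument is the pressure-correction term $w^{(2)}$: splitting off the singular full-space kernel $D_{x_i}N$ from the tangential heat convolution and the slab integration, and checking that the anisotropic Besov indices close once the outer derivative $D_{x_j}$ and the order $-1$ smoothing have cancelled.
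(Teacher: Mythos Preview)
Your treatment of $w^{(1)}$ is correct and essentially matches the paper: writing $\mathcal{T}G_i:=D_{x_n}T_2G_i$, Lemma~\ref{lem-T} at $\al=0$ gives $\|\mathcal{T}G\|_{L^q(\R_+\times{\mathbb R})}\le c\|G\|_{\dot B^{-1/q,-1/2q}_q}$. The gap is in $w^{(2)}$. Your multiplier heuristic $D_{x_j}\circ(D_{x_i}N\ast\cdot)=R_iR_j$ is valid only for a genuine convolution over $\R$, and the slab truncation $\int_0^{x_n}$ destroys it; you acknowledge this, but the proposed cure---integration by parts in $z_n$---does not close. The boundary contribution at $z_n=x_n$ carries the factor $D_{x_i}N(x'-z',0)$, which is singular on all of $\{z'=x'\}$ and is not damped by any Gaussian; the surviving interior term is still a slab integral, now with $\Ga$ in place of $D_{z_n}\Ga$, so the ``explicit Gaussian decay of $D_{z_n}\Ga$'' you invoke is no longer there to pay for the outer derivative $D_{x_j}$ still sitting in front. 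None of the resulting pieces fits directly into Lemmas~\ref{lemma0115} or \ref{lem-T}.

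The paper circumvents the slab issue by a reformulation (from \cite{CC}) that rewrites $w_i=-\mathcal{T}G_i-4\de_{in}\mathcal{I}f+4\pa_i\mathcal{S}f$ with $f=\sum_{j\le n-1}\pa_j\mathcal{T}G_j$, where $\mathcal{I}$ is tangential convolution with $N(\cdot,0)$ and $\mathcal{S}$ is the slab operator \eqref{C80K-april7}. Two observations then finish the job. First, $\mathcal{I}f=\sum_j R'_j\mathcal{T}G_j$ is a purely \emph{tangential} Riesz transform of $\mathcal{T}G\in L^q$, hence itself in $L^q$. Second---and this is the step replacing your integration by parts---one checks that $\mathcal{S}f$ solves the half-space Dirichlet problem $\De\mathcal{S}f=\mbox{div}\,F$, $\mathcal{S}f|_{x_n=0}=0$, with $F=(-\tfrac12\mathcal{T}G',\mathcal{I}f)\in L^q$; a single Calder\'on--Zygmund estimate for this elliptic problem then yields $\|D_x\mathcal{S}f\|_{L^q}\le c\|F\|_{L^q}$, which is exactly the bound your kernel manipulation was aiming for. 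Finally, the factor $\max\{1,T^{1/2q}\}$ is not produced by a norm comparison as you describe: one needs the separate Young-inequality bound $\|w\|_{L^q(\R_+\times(0,T))}\le cT^{1/2q}\|G\|_{L^q(\Rn\times(0,T))}$ on the kernel, and then combines it with the homogeneous $\dot B^{-1/q,-1/2q}$ estimate via the decomposition $B^s_{q0}=\dot B^s_{q0}+L^q$ for $s<0$.
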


\begin{proof}

%It is rather well known  that
%\begin{equation}
%\label{already-known}
%\|w\|_{\dot W^{2,1}_{q}(\R_+ \times {\mathbb R})}\leq c\|G\|_{\dot B^{2-\frac{1}{q},1-\frac{1}{2q}}_{q}(\Rn
%\times{\mathbb R})}.
%\end{equation}
%for $G\in \dot B^{2-\frac{1}{q},1-\frac{1}{2q}}_{q}(\Rn
%\times{\mathbb R})$ (Although a convex bounded domain has been considered in \cite{KS1}, the argument also holds  in the half space).

%
%{\color{red}{
%Recall  $B^{s,\frac{s}{2}}_{q}(\Rn\times {\mathbb R}_+)=B^{s,\frac{s}{2}}_{q0}(\Rn\times {\mathbb R}_+)$ for $-1+\frac{2}{q}<s<\frac{2}{q}$.
%
%  }}

By the definition of the space $ B^{-\frac1q,\frac{1}{2q}}_{q0}(\Rn\times (0,T))$,
the  zero extension of $G$ is in $ B^{-\frac1q,\frac{1}{2q}}_{q}(\Rn\times (-\infty,T))$. Again, by the  definition of the space $ B^{-\frac1q,\frac{1}{2q}}_{q0}(\Rn\times (-\infty,T))$, there is $\tilde{G}\in  B^{-\frac1q,\frac{1}{2q}}_{q}(\Rn\times {\mathbb R})$ with $
\tilde{G}|_{\Rn\times (0,T)}=G,$ $\mbox{supp}\tilde{G}\subset \Rn\times (0,\infty)$ and  $\|\tilde{G}\|_{ B^{-\frac1q,\frac{1}{2q}}_{q}(\Rn\times {\mathbb R})}
\leq c\|G\|_{B^{-\frac1q,\frac{1}{2q}}_{q0}(\Rn\times (0,T))}.$
Hence, without loss of generality, we assume  that $G\in B^{-\frac{1}{q},-\frac{1}{2q}}_{q}(\Rn\times {\mathbb R})$,  $\mbox{supp  }G\subset \Rn\times {\mathbb R}_+$  with $\|{G}\|_{ B^{-\frac1q,\frac{1}{2q}}_{q}(\Rn\times {\mathbb R})}
\leq c\|G\|_{B^{-\frac1q,\frac{1}{2q}}_{q0}(\Rn\times (0,T))}$ (By density argument, we may assume that $G \in C_0^\infty(\Rn \times {\mathbb R})$).
%with  $\|G\|_{B^{\al-\frac{1}{q},\frac{\al}{2}-\frac{1}{2q}}_{q}(\Rn\times {\mathbb R})}=\|G\|_{B^{\al-\frac{1}{q},\frac{\al}{2}-\frac{1}{2q}}_{q0}(\Rn\times {\mathbb R}_+)}.$
%For $G \in C^\infty_0 (\Rn \times {\mathbb R})$ and $\phi \in L^{q'} (\R \times {\mathbb R})  $ , we have
%\begin{align*}
%%\| {\mathcal T}G \|_{L^q(\R \times (0, T))} = \sup_{\| \phi\|_{L^{q'} (\R \times (0, T))} =1}
%\int_{{\mathbb R}} \int_{\R}  {\mathcal T}   G(x,t) \phi(x,t) dxdt
%&= \int_{{\mathbb R}} \int_{\Rn}   G(y',\tau) \int_s^\infty \int_{\R}  D_{x_n}\Ga(x'-y', x_n, t-\tau)
%\phi(x, t) dx dt dy' d\tau\\
% &\leq \|   G \|_{\dot B^{-\frac1q, -\frac1{2q}}_q(\Rn \times {\mathbb R})}    \| D_y T^*_1  \phi|_{y_n =0}\|_{\dot B^{\frac1q, \frac1{2q}}_{q'} (\Rn \times {\mathbb R})}  \\
% &\leq c\|   G \|_{ \dot B^{-\frac1q, -\frac1{2q}}_q(\Rn \times {\mathbb R})}    \| D_y T^*_1   \phi \|_{\dot W^{\frac1q +\frac1{q'}, \frac1{2q} +\frac1{2q'}}_{q'} (\R \times {\mathbb R})}  \\
% &\leq c\|   G \|_{ \dot B^{-\frac1q, -\frac1{2q}}_q(\Rn \times {\mathbb R})}    \|  T^*_1  \phi \|_{ \dot W^{2,1}_{q'} (\R \times {\mathbb R})}  \\
% &\leq c   \|   G \|_{\dot  B^{-\frac1q, -\frac1{2q}}_q(\Rn \times {\mathbb R})}    \|   \phi \|_{L^{q'} (\R \times {\mathbb R})}.
%% & = c\max(1, T^{\frac1{2q}} ) \| G \|_{ \mbox{\r{ B}}^{-\frac1q, -\frac1{2q}}_q(\Rn \times (0, T))}    \|   \phi \|_{L^{q'} (\R \times  (0,T))}.
%\end{align*}

According to \cite{CC}, $w$ can be rewritten by the following form
%, we decompose $v_i$ as follows:
\begin{equation}\label{C60K-april7}
w_i(x,t) = -{\mathcal T} G_i(x,t)   -4\delta_{in} {\mathcal I} \Big(\sum_{j=1}^{n-1} \frac{\pa}{\partial x_j}
        {\mathcal T}G_j\Big)(x,t)+ 4
    \frac{\partial}{\partial x_i} {\mathcal S}\Big(\sum_{j=1}^{n-1}\frac{\pa}{\partial x_j} {\mathcal T}G_j\Big)(x,t), \,\, i=1,\cdots,n,
\end{equation}
where ${\mathcal T}$, ${\mathcal S}$  and ${\mathcal I}$ are defined by
\begin{align}\label{C70K-april7}
{\mathcal T}G_i(x,t)&=\int_{-\infty}^t \int_{\Rn} D_{x_n}\Ga(x'-y', x_n, t-\tau)
G_i(y', \tau) dy' d\tau,\\
%\end{equation}
%and ${\mathcal S}$  and ${\mathcal I}$ are the operators defined by
%\begin{align}
\label{1027} {\mathcal I}f (x,t) &= \int_{{\mathbb R}^{n-1}} N(x' -y', 0) f(y',x_n,t)
dy',\\
\label{C80K-april7}
{\mathcal S}f (x,t) &= \int_0^{x_n} \int_{{\mathbb R}^{n-1}} N(x-y) f (y,t) dy.
\end{align}
%\begin{equation}\label{C90K-april7}
%{\mathcal U} f_j(x,t) := \int_0^t \int_{\Rn} D_{y_n}\Ga(x'-y', x_n
%t-\tau) f_j(y, \tau) dy d\tau.
%\end{equation}
%
%For $G \in C^\infty_0 (\Rn \times {\mathbb R})$ and $\phi \in L^{q'} (\R \times {\mathbb R})  $ , we have
%\begin{align*}
%%\| {\mathcal T}G \|_{L^q(\R \times (0, T))} = \sup_{\| \phi\|_{L^{q'} (\R \times (0, T))} =1}
%\int_{{\mathbb R}} \int_{\R}  {\mathcal T}   G(x,t) \phi(x,t) dxdt
%&= \int_{{\mathbb R}} \int_{\Rn}   G(y',\tau) \int_s^\infty \int_{\R}  D_{x_n}\Ga(x'-y', x_n, t-\tau)
%\phi(x, t) dx dt dy' d\tau\\
% &\leq \|   G \|_{\dot B^{-\frac1q, -\frac1{2q}}_q(\Rn \times {\mathbb R})}    \| D_y T^*_1  \phi|_{y_n =0}\|_{\dot B^{\frac1q, \frac1{2q}}_{q'} (\Rn \times {\mathbb R})}  \\
% &\leq c\|   G \|_{ \dot B^{-\frac1q, -\frac1{2q}}_q(\Rn \times {\mathbb R})}    \| D_y T^*_1   \phi \|_{\dot W^{\frac1q +\frac1{q'}, \frac1{2q} +\frac1{2q'}}_{q'} (\R \times {\mathbb R})}  \\
% &\leq c\|   G \|_{ \dot B^{-\frac1q, -\frac1{2q}}_q(\Rn \times {\mathbb R})}    \|  T^*_1  \phi \|_{ \dot W^{2,1}_{q'} (\R \times {\mathbb R})}  \\
% &\leq c   \|   G \|_{\dot  B^{-\frac1q, -\frac1{2q}}_q(\Rn \times {\mathbb R})}    \|   \phi \|_{L^{q'} (\R \times {\mathbb R})}.
%% & = c\max(1, T^{\frac1{2q}} ) \| G \|_{ \mbox{\r{ B}}^{-\frac1q, -\frac1{2q}}_q(\Rn \times (0, T))}    \|   \phi \|_{L^{q'} (\R \times  (0,T))}.
%\end{align*}

%setting $f= \sum_{j=1}^{n-1}\frac{\pa}{\partial x_j}{\mathcal T}G_j(x,t)$,

Observe that ${\mathcal T}=D_{x_n}T_2$, where  $T_2$ is the heat operator defined in section \ref{preliminary}.
From Lemma \ref{lem-T},  we have
\begin{align}
\label{v-42}
\| {\mathcal T}   G \|_{L^{q}(\R_+ \times {\mathbb R})}  \leq
c  \|   G \|_{ \dot B^{-\frac1q,-\frac1{2q}}_q(\Rn \times {\mathbb R})}.
\end{align}
%and
%\begin{equation}
%\label{already-known}
%\|{\mathcal T}   G\|_{\dot W^{2,1}_{q}(\R_+ \times {\mathbb R})}\leq c\|G\|_{\dot B^{2-\frac{1}{q},1-\frac{1}{2q}}_{q}(\Rn
%\times{\mathbb R})}.
%\end{equation}
%Note that
%\begin{align}\label{riesz}
%\notag I \sum_{j=1}^{n-1} \frac{\pa}{\partial x_j} {\mathcal U} f_j(x,t)
% & = \sum_{j=1}^{n-1}\int_{{\mathbb R}^{n-1}}
%      \frac{\partialE(x' -y',0)}{\partial y_j} {\mathcal U} f_j(y',x_n,t) dy'\\
%    & = \sum_{j=1}^{n-1}R_j {\mathcal U} f_j(x',x_n,t),
%\end{align}
%where $R_j$ are Riesz transform in ${\mathbb R}^{n-1}$.
%We will show that for $k=1, 2, 3,$
%%\[
%%\int_0^T\| \na^kw(\cdot, t)\|^p_{L^p({\mathbb R}^n_+)}  dt\leq c
%%\int_0^T   \sum_{1 \leq j \leq n} \|\na^{k-1}F_j(\cdot,
%%t)\|^p_{L^p({\mathbb R}^n_+)}
%%\]
%\begin{equation}\label{k-w}
%\int_0^\infty \| D_x^k{\mathcal S}f(\cdot, t)\|^p_{L^p({\mathbb R}^n_+)}  dt\leq c
%%\leq
%%c \sum_{j=1}^{n-1} \int_0^T \|\na^{k-1} {\mathcal U}
%%f_j(\cdot, t)\|^p_{L^p({\mathbb R}^n_+)} dt\leq c
%%\sum_{j=1}^{n-1}
% \int_0^\infty \| F(\cdot,
%t)\|^p_{\dot W^{k-1}_p({\mathbb R}^n_+)} dt.
%\end{equation}
%Indeed, in case that $k=1, 2$, we

Direct computation shows that  for $1 \leq j \leq n-1$ %,  %$ D_x^2 {\mathcal I} \Big( \frac{\partial}{\partial x_j}
%{\mathcal T}G_j\Big)=  {\mathcal I}
%        \Big(\frac{\partial}{\partial x_j} D_x^2 {\mathcal T}G_j\Big)  $ and
%        $D_t {\mathcal I}  \Big(\frac{\partial}{\partial x_j} {\mathcal T}G_j\Big)= {\mathcal I} \Big( \frac{\partial}{\partial x_j} D_t {\mathcal T}G_j\Big)$.
%In particular,  the following identity holds:
\begin{equation}\label{C100K-april7-1}
 {\mathcal I}\Big(\sum_{j=1}^{n-1} \frac{\pa}{\partial x_j}{\mathcal T}G_j\Big) = \sum_{j=1}^{n-1}R^{'}_j {\mathcal T}G_j,
\end{equation}
where   $R'=(R_1',\cdots,R_{n-1}')$ is $n-1$ dimensional Riesz operator.
%From the identity ${\mathcal I}f = \sum_{j=1}^{n-1}R^{'}_j {\mathcal T}G_j$, by
By the well known property of Riesz operator
 we have
%\begin{align}\label{v2}
%\int_{-\infty}^\infty \| D_x^2 {\mathcal I} \Big(\sum_{j=1}^{n-1} \frac{\partial}{\partial x_j} {\mathcal T}G_j\Big)(\cdot,t) \|^q_{L^q ({\mathbb R}^n)} dt &
%\leq c \sum_{j=1}^{n-1} \int_{-\infty}^\infty \|  D_x^2 {\mathcal T}G_j(\cdot,t) \|^q_{L^q ({\mathbb R}^n)} dt,
%\end{align}
\begin{align}%\label{3-0-v}
\label{v-31}
 \|  {\mathcal I}\Big(\frac{\pa}{\partial x_j}{\mathcal T}G_j\Big) \|_{L^q ({\mathbb R}_+^n\times {\mathbb R})} dt & \leq c
\sum_{j=1}^{n-1}  \| {\mathcal T}G_j
\|_{L^q ({\mathbb R}^{n-1}\times {\mathbb R})}\leq  c  \|G\|_{\dot B^{ -\frac1q,  - \frac1{2q}}_q(\Rn \times {\mathbb R})}.
\end{align}

Let $
f(x,t)=\sum_{j=1}^{n-1} \frac{\pa}{\partial x_j}{\mathcal T}G_j(x,t).$
Direct computation also shows that
%We note that
${\mathcal S}f$ solves
%for each $t$
\begin{equation}\label{CK100-april7}
\De {\mathcal S}f(x,t)=% -\frac12f(x,t)
%      + \frac{\partial}{\partial x_n}  {\mathcal I} f(x,t)=
\mbox{div}F
      \mbox{ in }\,\,{\mathbb R}^n_+\mbox{ for each }t>0,\quad {\mathcal S}f|_{x_n=0}=0,
\end{equation}
%Note that $w$ satisfies the following equation
%\begin{align*}
%\De w(x,t)& = -\frac12 \sum_{j=1}^{n-1} \frac{\pa}{\partial x_j} {\mathcal U} f_j(x,t)
%      + \frac{\partial}{\partial x_n}  I \sum_{j=1}^{n-1} \frac{\pa}{\partial x_j} {\mathcal U} f_j(x,t)
%      \qquad x \in {\mathbb R}^n_+,\\
%w|_{x_n =0} & =0.
%\end{align*}
%We denote by $R^{'}=(R^{'}_1,\cdots, R^{'}_{n-1})$  the Riesz
%transforms in ${\mathbb R}^{n-1}$ and we
where
\begin{equation}\label{C100K-april7}
F_j: = -\frac12{\mathcal T}G_j,\qquad j=1\cdots, n-1,
\quad
%\begin{equation}\label{C100K-april7}
F_n: = {\mathcal I}\sum_{j=1}^{n-1} \frac{\pa}{\partial x_j}{\mathcal T}G_j(x,t).
% = \sum_{j=1}^{n-1}R^{'}_j T_2G_j(x',x_n,t).
\end{equation}
%Let $F_j: = -\frac12 {\mathcal U}f_j$ for $1 \leq j \leq n-1$ and
%$F_n: = I\sum_{j=1}^{n-1} \frac{\pa}{\partial x_j} {\mathcal U}
%f_j(x,t) = \sum_{j=1}^{n-1}R^{'}_j {\mathcal U}
%f_j(x',x_n,t)$, where $R^{'}$   are Riesz transforms in ${\mathbb
%R}^{n-1}$.
%Then, the right-hand side of \eqref{CK100-april7} is equal to $\rm{div}\, F$.
%
%\begin{equation}\label{CK100-april7-1}
%\De {\mathcal S}\Big( \sum_{j=1}^{n-1}T_2G_j\Big)(x,t)=\rm{div}\, F
%      \qquad \mbox{ in }\,\,{\mathbb R}^n_+\mbox{ for each }t>0.
%\end{equation}
 By the solution representation of
Laplace equation \eqref{CK100-april7},  ${\mathcal S}f$ can be rewritten by  the formula
\begin{align*}
{\mathcal S}f(x,t) & = -\int_{{\mathbb R}^n_+} (N(x-y) - E(x-y^*) ) div \, F(y,t) dy\\
        & = \int_{{\mathbb R}^n_+} \nabla_x (N(x-y) - E(x-y^*) ) \cdot F(y,t) dy.
\end{align*}
%
%From the representation
%$
%{\mathcal S}f(x,t)  = \int_{{\mathbb R}^n_+} \nabla_x(N(x-y) - E(x-y^*) )F(y,t) dy,$ and u
Using Calderon Zygmund inequality, we have %using the properties of the
%\[
%D_{x_l}D_{x_k}D_{x_i}{\mathcal S}f(x,t)  = -\int_{{\mathbb R}^n_+} D_{x_k}D_{x_l}(N(x-y) - E(x-y^*) ) D_{y_i}div \, F(y,t) dy,i\neq n%,\mbox{ and }l,k\neq n\mbox{ or }k,l=n
%,\]
%and
%$D_{x_n}^3 {\mathcal S}f=D_{x_n}\Big(-\sum_{\neq k}D_{x_k}^2 {\mathcal S}f +\mbox{div}F\Big)$
%%\begin{align*}
%$D_t  D_{x_i}{\mathcal S}f(x,t) =D_{x_i}{\mathcal S}D_tf.$ %\int_0^{x_n} \int_{{\mathbb R}^{n-1}}
%%N(x-y) \sum_{j=1}^{n-1} \frac{\pa}{\partial y_j}
%We observe that
%we  have
\begin{equation}%\label{CK25-april11}
\label{CK25-april11-1}
 \| D_x{\mathcal S}f\|_{L^q({\mathbb R}^n_+\times {\mathbb R})}\leq c
  \|F\|_{L^q({\mathbb R}^n_+\times {\mathbb R})}\leq c \|G\|_{\dot B^{ -\frac1q,  - \frac1{2q}}_q(\Rn \times {\mathbb R})}.
%\]
%\begin{equation}\label{CK25-april11}
%\leq c \sum_{j=1}^{n-1} \int_0^\infty \|D_x^{k-1} u_j(\cdot, t)\|^p_{L^p({\mathbb R}^n_+)} dt\leq c \sum_{1 \leq j
%\leq n-1}   \| u_j\|^p_{\dot
%B^{k-1-\frac1p, \frac12 (k -1 -\frac1p)}_p(\Rn \times (0, \infty))}.
\end{equation}

Combining  \eqref{v-42}, \eqref{v-31} and \eqref{CK25-april11-1},  we have
\begin{equation}\label{C110K-april7-1}
\| w\|_{L^q(\R_+ \times {\mathbb R})} \leq c \|G\|_{\dot B^{ -\frac1q,  - \frac1{2q}}_q(\Rn \times {\mathbb R})}.
\end{equation}
%From the definition of $B^{2-\frac{1}{q},1-\frac{1}{2q} }_{q0} ({\mathbb R}^{n-1} \times (0,T))$, the zero extension of $G$ to $\Rn\times (-\infty,T)$ is in $B^{2-\frac{1}{q},1-\frac{1}{2q} }_{q} ({\mathbb R}^{n-1} \times (-\infty,T))$. On the other hand, extend $G$ to $\Rn\times {\mathbb R}$ by reflection at $t=T$ so that $\tilde G(x,t)=G(x,2T-t)$ for $t>T$.
%Then
%%\in \mbox{\r{B}}^{\al-\frac{1}{q},\frac{\al}{2}-\frac{1}{2q}}_q(\Rn\times {\mathbb R}_+)$,
% $\tilde {G}=(\tilde{G}',0)\in  B^{2-\frac{1}{q},1-\frac{1}{2q} }_{q} ({\mathbb R}^{n-1} \times {\mathbb R})$  with
%$\mbox{\rm supp} \, \tilde G\subset \Rn \times (0,2T)$ and
%$
%\|\tilde G\|_{ B^{2-\frac{1}{q},1-\frac{1}{2q} }_{q} ({\mathbb R}^{n-1} \times {\mathbb R})}\approx \|G\|_{B^{2-\frac{1}{q0},1-\frac{1}{2q} }_{q} ({\mathbb R}^{n-1} \times (0,T))}$.
%Next, the zero extension of $G$ to $\Rn\times (-\infty,T)$ is in $B^{-\frac{1}{q},-\frac{1}{2q} }_{q0} ({\mathbb R}^{n-1} \times (-\infty,T))$. On the other hand, extend $ G^*$ to $\Rn\times {\mathbb R}$ by
%\begin{align*}
%< G^*, \phi>|_{\Rn \times {\mathbb R}} = <G, \phi^*|_{\Rn \times (0,T)}>|_{{\Rn \times (0, T)}},
%\end{align*}
%where $\phi^*(t) = \phi(t) - \phi(2T -t)$. Then, we have $
%\| G^*\|_{ B^{-\frac{1}{q},-\frac{1}{2q} }_{q} ({\mathbb R}^{n-1} \times {\mathbb R})} \approx \|G\|_{B^{-\frac{1}{q},-\frac{1}{2q} }_{q0} ({\mathbb R}^{n-1} \times (0,T))}$.
On the other hand, by Young's theorem and Minkovski's theorem, we have
 \begin{equation}\label{y11}
\| w\|_{L^q(\R_+ \times (0, T))} \leq cT^{\frac{1}{2q}}
\|  G\|_{L^{q}(\Rn \times (0,T)) }.
\end{equation}
 Recall that ${B}^{s,\frac{s}{2}}_{q0}(\Omega\times (0,T))=\dot{B}^{s,\frac{s}{2}}_{q0}(\Omega\times (0,T))+L^q(\Omega\times (0,T))$ for $s<0$. Combining
from  \eqref{C110K-april7-1} and \eqref{y11}, we have
 \begin{align}\label{y22}
\| w\|_{L^q(\R_+ \times (0, T))} &\leq c\max\{1,T^{\frac{1}{2q}}\}
 \|  G\|_{B^{ -\frac1q,  -\frac1{2q}}_{q0}(\Rn \times (0,T)) }.
\end{align}

%We completed the proof.

\end{proof}

\begin{rem}
\label{regularity-zero}
Let $G_{ij}^*(x,y,t)= D_{x_j}\int_0^{x_n} \int_{\Rn}   \Ga(z-y^*,t)  D_{x_i} N(x-z)  dz$, $y^*=(y',-y_n)$.
 %Note that %K_{ij}$ satisfis the estimates
%\begin{align}\label{inequality11}
It is known that \begin{align}
\label{gauss}
|D^{s}_{t} D^{k}_{x} D_{y}^{m} G^*_{ij}(x,y,t)|
& \leq  \frac{c}{t^{s + \frac{m_n}2} (|x-y^*|^2 +t )^{\frac{n+k'+m'}2 } (x_n^2 +t)^{\frac{k_n}2 }}e^{-\frac{cy_n^2}{t}},%\\
%|D^j_xD^m_tA(x,t)|&\leq \frac{c}{t^{m+\frac{1}{2}}(|x|^2+t)^{\frac{n-2+|j|}{2}}},
\end{align}
%where $ 1 \leq  i \leq n$ and $1 \leq j \leq n-1$ (see \cite{Sol}).
where $ 1 \leq  i \leq n$ and $1 \leq j \leq n-1$ (see Proposition 2.5 of \cite{Sol-2}).
Using the properties of
heat kernel $\Gamma_t$ and the estimates of
$G^*_{ij}$, we have
\begin{align*}
%\label{inequality11}
|D^{s}_{t} D^{k}_{x} D_{x_j}K_{ij}(x'-y',x_n,t)| \leq  \frac{c}{t^{s + \frac{1}2} (|x'-y'|^2+x_n^2 +t )^{\frac{n+k'}2 } (x_n^2 +t)^{\frac{k_n}2 }}.%\\
%|D^j_xD^m_tA(x,t)|&\leq \frac{c}{t^{m+\frac{1}{2}}(|x|^2+t)^{\frac{n-2+|j|}{2}}},
\end{align*}
Using this estimate of $K_{ij}$, direct computation shows that
\[
\|D_xw(\cdot, x_n,t)\|_{L^q(\Rn)}\leq ct^{\frac{1}{2}}x_n^{-2}\|G\|_{L^q(\Rn \times (0,T))}.
\]
and
\[
\|D_xw(\cdot, x_n,t)\|_{L^q(\Rn \times (0, T))}\leq ct^{\frac{1}{2}}x_n^{-2-\frac{1}{q}}\|G\|_{\dot{B}^{-\frac{1}{q},-\frac{1}{2q}}_{q0}(\Rn \times (0,T))}.
\]
\end{rem}
%\begin{rem}

\section{\bf
Proof of Theorem \ref{thm-stokes}%Stokes equations with nonhomogeneous data $h, f, g$
}
\label{general}
\setcounter{equation}{0}

Let us   consider the Stokes equations \eqref{maineq-stokes}  with general nonhomogeneous data $h, f, g$ with
%:
%% For a given  boundary data $g=(g_1,\cdots, g_n)$, find a unknown vector field
%% $u=(u_1,\cdots, u_n)$ and a unknown function $p$
%% satisfying the equations of equations
%\begin{align}\label{maineq-stokes}
%\begin{array}{l}\vspace{2mm}
%u_t - \De u + \na p =f, \qquad div \, u =0, \mbox{ in }
% \R_+\times (0,T),\\
%\hspace{30mm}u|_{t=0}= h, \qquad  u|_{x_n =0} = g,
%\end{array}
%\end{align}
%where
 $f=\mbox{\rm div}{\mathcal F}.$
%Choose extension $\tilde{h}\in \dot{B}^{\al-\frac{2}{q}}_q(\R)$ and $\tilde{\mathcal F}\in \dot{B}_p^{\be,\frac{\be}{2}}(\R\times (0,T)$ of $h$ and ${\mathcal F}$, respectively, and let $\tilde{f}=\mbox{div}\tilde{\mathcal F}.$ We first give a solution formula of  \eqref{maineq-stokes} decomposed by  four vector fields, that is, $u=V+v+\nabla \phi+w$, where
%  $V$, $v$ and $ \phi$ are the  vector fields defined by \eqref{f1},  \eqref{f2}, and \eqref{f3}, respectively, and  %in section \ref{stokesformula}:
%%\begin{align}
%%  V(x,t)&=\int^t_{0}\int_{\R}\nabla_{x_k}\Gamma(x-y,t-s)[\delta_{ij}\tilde{F}_{ik}+R_iR_j\tilde{F}_{ik}](y,s)dyds,\\
%%  v(x,t)&=\int_{\R}\Gamma(x-y,t-s)\tilde{h}(y)dy,\\
%% \phi(x,t)&=2\int_{\Rn}N(x'-y',x_n)(g_n(y',t)-v_n(y',0,t)-V_n(y',0,t))dy'.
%% \end{align}
%%
% $w$ is  defined by \eqref{simple} with $G$  as in    \eqref{G}.
%In section \ref{decomposition}
Below, we give a solution formula of the  Stokes equations \eqref{maineq-stokes} decomposed by  four vector field, $v, V, \nabla \phi$ and $w$ which will be defined in section \ref{decomposition}.

\subsection{Solution formula}
\label{decomposition}
Let $\tilde{\mathcal F}$ be an extension of ${\mathcal F}$ to $\R\times {\mathbb R}_+$, and let $\tilde{f}=\mbox{div}\tilde{\mathcal F}$.
%
%Let $\tilde{f}$ be an extension of $f$ to $\R\times (0,T)$.
Define  the projection operator ${\mathbb P}$ by
\[
[{\mathbb P}\tilde{f}]_j(x,t)=\delta_{ij}\tilde{f}_i+D_{x_i}D_{x_j}\int_{\R}N(x-y)\tilde{f}_i(y,t)dy=\delta_{ij}\tilde{f}_i+R_iR_j\tilde{f}_i,\]
%Let $\widetilde{f}$ be some  extension of  $f$ satisfying that
and define ${\mathbb Q}$ by
\[
{\mathbb Q}\tilde{f}=-D_{x_i}\int_{\R}N(x-y)\tilde{f}_i(y,t)dy.\]
%Here $R=(R_1,\cdots,R_n)$ is $n$-dimensional Riesz operator.
  Then
 \[
 %\|\widetilde{f}\|_{\dot{B}^{\al,\frac{\al}{2}}(\R\times (-T,0))}\leq c\|f\|_{\dot{B}^{\al,\frac{\al}{2}}(\R_+\times (0,T))},
    \mbox{div }{\mathbb P}\tilde{f}=0\mbox{ in }\R\times (0,T)\mbox{ and }
   %by
  \tilde{f}={\mathbb P}\tilde{f} +\nabla {\mathbb Q}\tilde{f}.\]
Define  $V$ by
 \begin{align}
  V(x,t)=\int^t_{0}\int_{\R}\Gamma(x-y,t-s){\mathbb P}\tilde{f}(y,s)dyds.
 \end{align}
% Here $\Gamma(x,t)$ is the heat kernel defined by
% \[
% \Gamma(x,t)=\left\{\begin{array}{l}
% \frac{1}{(2\pi t)^{\frac{n}{2}}}e^{-\frac{|x|^2}{4t}}\mbox{ if }t>0.\\
% 0\mbox{ if }t\leq 0.
% \end{array}\right.
% \]
%and
%$\mbox{div}\widetilde{f}=0$ implies that $$.
   Observe that
  % Let
  $V$  satisfies the equations
 \begin{align}
 \label{heatequations1}
%\begin{array}{l}\vspace{2mm}
V_t - \De V  ={\mathbb P}\tilde{f},\ \mbox{div }V=0 \mbox{ in }
 \R\times (0,T),  \quad
% \\
%\hspace{30mm}
V|_{t=0}= 0\mbox{ on }\R.
%\end{array}
\end{align}
Observe that  $({\mathbb P}\tilde{f})_j=D_{x_k}\Big(\delta_{ij}\tilde{F}_{ki}+R_iR_j\tilde{F}_{ki}\Big)$ for $\tilde{f}=\mbox{div }\tilde{\mathcal F}$. %  since
%%Observe that
%\[
%({\mathbb P}\tilde{f})_j=\delta_{ij}D_{x_k}\tilde{F}_{ki}+D_{x_i}D_{x_j}\int_{\R}N(x-y)D_{x_k}\tilde{F}_{ki}(y,t)dy
%=D_{x_k}\Big(\delta_{ij}\tilde{F}_{ki}+D_{x_i}D_{x_j}\int_{\R}N(x-y)\tilde{F}_{ki}(y,t)dy\Big).\]
Hence $V$ can be rewritten by
\begin{align}
\label{f1}
  V_j(x,t)=-\int^t_{0}\int_{\R}D_{y_k}\Gamma(x-y,t-s)\Big(\delta_{ij}\tilde{F}_{ki}+R_iR_j\tilde{F}_{ki}\Big)(y,s)dyds.
 \end{align}

Let
%$h\in \dot{B}^\al(\R_+)$  with
 $\widetilde{h}$ be an  extension of  $h$ satisfying that
 \[
 %\|\widetilde{h}\|_{\dot{B}^\al(\R)}\leq c\|{h}\|_{\dot{B}^\al(\R_+)},\
 \mbox{div }\widetilde{h}=0\mbox{ in }\R.
 \]
Define $v$ by
    %by
 % Then $v$ is represented by
  \begin{equation}
  \label{f2}
  v(x,t)=\int_{\R}\Gamma(x-y,t) \widetilde{h}(y)dy.
  \end{equation}
Observe that $v$ satisfies the equations
 \begin{align}
 \label{heatequations2}
%\begin{array}{l}\vspace{2mm}
v_t - \De v  =0,\  \mbox{div}v=0 \mbox{ in }
 \R\times (0,T),\quad
%\hspace{30mm}
v|_{t=0}= \tilde{h} \mbox{ on }\R.
%\end{array}
\end{align}
%and
%$\mbox{div}\widetilde{h}=0$ and $\mbox{div}V=0$ imply that $\mbox{div}v=0$.

%Lete $G_n(y',t)=.$

%Let $g_n|_{t=0}=h_n|_{x_n=0}$.
  Define $\phi$ by
 \begin{equation}
 \label{f3}
 \phi(x,t)=2\int_{\Rn}N(x'-y',x_n)\Big(g_n(y',t)-v_n(x',0,t)-V_n(x',0,t)\Big)dy'.
 \end{equation}
 %Here $N(x)$ is the fundamental solution of the Laplace equation defined by
% \[
% N(x)=\left\{\begin{array}{l}
% \frac{c}{|x|^{n-2}}\mbox{ if }n\geq 3\\
% c\ln{1}{|x|}\mbox{ if }n=2.
% \end{array}\right.
% \]
%Observe that
 %\[
% \mbox{div}v=0,  v_t-{\mathbb P}\Delta v=f,
% v|_{t=0}=h, \ v|_{x_n}=0,
%  \]
 Observe that
 \begin{equation}
 \label{laplace}
  \Delta \phi=0,  \nabla \phi|_{x_n=0}=\Big(R'(g_n-v_n|_{x_n=0}-V_n|_{x_n=0}), g_n-v_n|_{x_n=0}-V_n|_{x_n=0}\Big).
 \end{equation}
% Here $R'=(R_1',\cdots,R_{n-1}')$ is $n-1$ dimensional Riesz operator.
Note that $ \nabla\phi|_{t=0}=0$ if $g_n|_{t=0}=h_n|_{x_n=0}$.

%Let $h|_{x_n=0}=g|_{t=0}$.
Let  $G=(G',0)$, where
\begin{equation}
\label{G}
G'=(G_1,\cdots, G_{n-1})=g'-v'|_{x_n=0}-V'|_{x_n=0}-R'(g_n-v_n|_{x_n=0}-V_n|_{x_n=0}).
\end{equation}
%Let $\tilde{G}
%%\in \dot{B}^{\al,\frac{\al}{2}}(\Rn\times (-\infty,T))
%$ be the zero extension of $G.
%%\in \dot{B}^{\al,\frac{\al}{2}}(\Rn\times (0,T))
%$ %satisfying that
%\[
%\|\tilde{G}\|_{ \dot{B}^{\al,\frac{\al}{2}}(\Rn\times (-\infty,T))}\leq c\|G\|_{ \dot{B}^{\al,\frac{\al}{2}}(\Rn\times (0,T))}.
%\]
Note that $G'|_{t=0}=0$ if $g|_{t=0}=h|_{x_n=0}$.
 Let $w$ be the vector field defined by the formula \eqref{simple} with boundary data $G=(G',0)$ for $G'$  as defined in \eqref{G}.
 %
%
% Let $(w,r)$ be the solution of the equations
% \begin{align}\label{maineq1}
%\begin{array}{l}\vspace{2mm}
%w_t - \De w + \na r =0, \qquad div \, w=0, \mbox{ in }
% \R_+\times (0,T),\\
%\hspace{30mm}w|_{t=0}= 0, \qquad  w|_{x_n =0} = G.
%\end{array}
%\end{align}
 Then,
 \begin{equation}
 u=w+\nabla \phi+v+V\mbox{ and } p=r-\phi_t+{\mathbb Q}\tilde{f}
 \end{equation}
  satisfies formally  the nonstationary following Stokes equations  \eqref{maineq-stokes}.%

%In \cite{sol1}, an explicit formula for $w$
% %the solution
% of the Stokes equations \eqref{maineq1} with boundary data $G=(G', 0)$  is
% obtained by
%\begin{align}\label{simple}
%w_i(x,t)& = \sum_{j=1}^{n-1}\int_0^t \int_{\Rn} K_{ij}( x'-y',x_n,t-s)G_j(y',s) dy'ds%, \\
%%\label{simple1}
%%p(x,t)& = \sum_{j=1}^{n-1}\int_0^t \int_{\Rn} \pi_j(x'-y',x_n,t-s) g_j(y',s) dy'ds
%%.
%\end{align}
%Here,
%\begin{align*}
%K_{ij}(x,t) & = -2 \delta_{ij}D_{x_n}  \Ga(x,t)  +4 D_{x_j}\int_0^{x_n} \int_{\Rn}  D_{z_n}  \Ga(z,t)  D_{x_i} N(x-z)  dz,%\\
%%% {K}_{ij}^{(0)} (x,t) & =D_{x_j}\int_0^{x_n} \int_{\Rn}  D_{z_n}  \Ga(z,t)  D_{x_i} N(x-z)  dz,\\
%%\pi_j (x,t) & =-2\delta(t)D_{x_j} D_{x_n}N(x)
%%      +4D_{x_j}D^2_{ x_n}A(x,t)
%%+4 D_t D_{x_j} A(x,t),\\
%% A(x,t) & =\int_{\Rn}\Ga(z',0,t)N(x'-z',x_n)dz',
%\end{align*}
%where $  \Ga$ and $N$ are fundamental solutions of heat equation and Laplace equation in $\R$, respectively, that is,
%\[
% \Gamma(x,t)=\left\{\begin{array}{ll} \vspace{2mm}
% \frac{c}{ (2\pi t)^{\frac{n}{2}}}e^{-\frac{|x|^2}{4t}}&\mbox{ if }t>0,\\
% 0& \mbox{ if }t\leq 0,
% \end{array}\right. \quad \mbox{and} \quad    N(x) = \left\{\begin{array}{ll}
% \vspace{2mm}
%  \frac{1}{\om_n (2-n)|x|^{n-2}}&\mbox{ if }n\geq 3,\\
% \frac{1}{2\pi}\ln |x|&\mbox{ if }n=2.\end{array}\right.
% \]

\subsection{%Proof of Theorem \ref{thm-stokes}
Estimates of $u=v+V+\nabla \phi+w$}

\label{proof.stokes}
%\subsection{Proof of Theorem \ref{thm-stokes}}
%\subsection{Estimate of  the solution}

Choose $\tilde{h}\in B^{-\frac{2}{q}}_q(\R)$
 %if $\al>\frac{1}{q}$ and  $\tilde{h}\in \dot {B}^{\al-\frac{2}{q}}_q(\R)\cap \dot {B}^{-\frac{2}{q}}_q(\R)$ if $0\leq \al\leq \frac{1}{q}$
  so that $\tilde{h}|_{\R_+}=h$ and $\mbox{\rm div }\tilde{h}=0$. Choose    $\tilde{\mathcal F} \in L^p( \R\times {\mathbb R}_+)$
   so that $\tilde{\mathcal F}|_{\R_+\times {\mathbb R}_+}={\mathcal F}$.  Let $\tilde{f}=\mbox{div }\tilde{\mathcal F}.$
%Observe that the zero extension of $\tilde{F}$ to $\R\times {\mathbb R}$ is also in      $L^p( {\mathbb R};B^\beta_p(\R))  $.

Let   $V$, $v$ and $ \phi$ be the corresponding vector fields defined by \eqref{f1},  \eqref{f2}, and \eqref{f3}, respectively, and  %in section \ref{stokesformula}:
%\begin{align}
%  V(x,t)&=\int^t_{0}\int_{\R}\nabla_{x_k}\Gamma(x-y,t-s)[\delta_{ij}\tilde{F}_{ik}+R_iR_j\tilde{F}_{ik}](y,s)dyds,\\
%  v(x,t)&=\int_{\R}\Gamma(x-y,t-s)\tilde{h}(y)dy,\\
% \phi(x,t)&=2\int_{\Rn}N(x'-y',x_n)(g_n(y',t)-v_n(y',0,t)-V_n(y',0,t))dy'.
% \end{align}
%
let $w$ be  defined by \eqref{simple} with $G$  as defined by    \eqref{G}.

%
%and
%define  $V$, $v$ and $\phi$ as in section \ref{decomposition}:
%\begin{align}
%  V(x,t)&=<\Gamma(x-\cdot,t-\cdot),\delta_{ij}\tilde{f}_i+R_iR_j\tilde{f}_i>,\\
%  v(x,t)&=\int_{\R}\Gamma(x-y,t-s)\tilde{h}(y)dy,\\
% %\end{align}
%% $\phi$ satisfies the Laplace equation \eqref{laplace}with the representation
%%  \begin{equation}
% \phi(x,t)&=2\int_{\Rn}N(x'-y',x_n)(g_n(y',t)-v_n(y',0,t)-V_n(y',0,t))dy'.
% \end{align}
%Here if $\tilde{f}$ and $\tilde{h}$ are  functions,  then $V$ and $v$ should be defined by
%\[V(x,t)=\int^t_{0}\int_{\R}\Gamma(x-y,t-s)[\delta_{ij}\tilde{f}_i+R_iR_j\tilde{f}_i](y,s)dyds.\]
%%Define
%% \begin{align}\label{inital-externalforce1}
%%V(x,t)  =\left\{\begin{array}{l}
%%<  \tilde f, \Ga(x-\cdot,t-\cdot)>_{\R\times {\mathbb R}}\mbox{ if }0<\al<2,\\
%%\int^t_0\int_{\R}\Ga(x-y,t-s)\tilde{f}(y,s)dyds\mbox{ if }\al\geq 2.\end{array}\right.
%%\end{align}
%%and
%% \begin{align}\label{inital-externalforce2}
%\[ v(x,t)  =%\left\{\begin{array}{l}
%% <\tilde h-V|_{t=0}, \Ga(x-\cdot,t) >_{\R}\mbox{ if }0<\al<\frac{2}{q},\\
%\int_{\R}\Ga(x-y,t)\tilde{h}(y)dy-\int_{\R}\Ga(x-y,t)V(y,0)dy\mbox{ if }\al\geq \frac{1}{q}
%\]
%%\end{array}\right.
%\end{align}

%Let $v$ be defined by \eqref{inital-externalforce}.we see that

1)
From Lemma \ref{proheat1}, we have
%$
% v \in B^{\alpha,\frac{\alpha}{2}}_q(\R\times {\mathbb R}_+)
%$
with
\begin{align*}
\|v\|_{L^q(\R_+\times (0,T))}&\leq c \max\{1,T^{\frac{1}{q}}\} \|h\|_{B^{-\frac{2}{q}}_q(\R_+)}.
\end{align*}
%\[
%\|v\|_{W^{2,1}_q(\R_+\times (0,T))}\leq c\|h\|_{\dot{B}^{2-\frac{2}{q}}_q(\R_+)}+c\|h\|_{\dot{B}^{-\frac{2}{q}}_q(\R_+)},
%\]
%\[
%\|v\|_{L^q(\R_+\times (0,T))}
%\leq c\max\{1,T^{\frac{1}{q}}\}\|h\|_{{B}^{-\frac{2}{q}}_q(\R_+)},\al=0.
%\]
%Here we used the fact that
%$B^{\alpha,\frac{\alpha}{2}}_q(\R_+\times (0,T))=\dot B^{\alpha,\frac{\alpha}{2}}_q(\R_+\times (0,T))\cap L^q(\R_+\times (0,T))$ for $\al>0$%,
%${B}^{\alpha-\frac{2}{q}}_q(\R_+)=\dot {B}^{\alpha-\frac{2}{q}}_q(\R_+) \cap L^q(\R_+)$ for $\al>\frac{1}{q}$
%and ${B}^{\alpha-\frac{2}{q}}_q(\R_+)=\dot {B}^{\alpha-\frac{2}{q}}_q(\R_+) +L^q(\R_+)$ for $\al<\frac{1}{q}$.???

2) %Let $\be=\al-1+(n+2)(\frac{1}{p}-\frac{1}{q})$.
 By the $L^p$ boundedness of the Riesz operator (see \eqref{propriesz2}) we have
$\|R_iR_j \tilde{F}_{kl}\|_{L^p(\R\times {\mathbb R}_+)}\leq c\|\mathcal F\|_{L^p(\R_+\times {\mathbb R}_+)}$. Hence, from Lemma \ref{propheat2} we have
%$
%V \in W^{1,\frac{1}{2}}_p(\R\times (0,T))$ with
%$
%so that
%if $ 0<\be\leq \frac{2}{p}$, then
\begin{align*}
%\|V\|_{W^{1,\frac{1}{2}}_p(\R\times (0,T))}                                                       %,
%&\leq  c\max\{1,T^{\frac{\al_1}{2}}\}\|{\mathcal F}\|_{L^p(\R \times (0,T))},\\
\|V\|_{L^q(\R\times (0,T))}                                                       %,
&\leq  cT^{\frac{\al_1}{2}}\|{\mathcal F}\|_{L^p(\R \times (0,T))}.\end{align*}
%where  %$p\leq q$ and
%$\be=\frac{\al}{\al_1}$ and $\al_1=2-(n+2)(\frac{1}{p}-\frac{1}{q})0$.
%\[+\Big(+c\max\{1,T^{\frac{-\al+\be}{2}}\}T^{\frac{1}{2}-\frac{n}{2}(\frac{1}{p}-\frac{1}{q})+\frac{1}{q}}\|{\mathcal F}|_{t=0}\|_{{B}^{\be-\frac{2}{p}}_p(\R_+)}\mbox{ if }
%\be>\frac{2}{p}\Big)
%\]

3) Since $div\, V=0,\,\, div\, v =0$ in $\R_+ \times (0,T)$, $v_n$ and $V_n$ have trace (see  \eqref{tracetheorem}) with
%$V_n|_{x_n=0},\quad v_n|_{x_n =0} \in L^q (0,T: \dot{B}^{-\frac1q}_q(\Rn))$ with
\begin{align}
\label{t1}
\| V_n(t)|_{x_n =0} \|_{\dot{B}^{-\frac1q}_q(\Rn)}\leq c \|V(t)\|_{ L^q (\R_+)},\quad
  \| v_n(t)|_{x_n =0} \|_{ \dot{B}^{-\frac1q}_q(\Rn)} \leq c \|v(t)\|_{ L^q (\R_+)}.
\end{align}
%\begin{align}
%\label{t11}\| V_n(t)|_{x_n =0}-V_n(s)|_{x_n =0} \|_{\dot{B}^{-\frac1q}_q(\Rn)}\leq c \|V(t)-V(s)\|_{ L^q (\R_+)},\\
% \label{t111} \| v_n(t)|_{x_n =0}-v_n(s)|_{x_n =0} \|_{ \dot{B}^{-\frac1q}_q(\Rn)} \leq c \|v(t)-v(s)\|_{ L^q (\R_+)}.
%\end{align}
This leads to the estimate
%,\eqref{t11} and \eqref{t111},
 %we have
\begin{align}
\label{t4}
%\|v_n|_{x_n=0}\|_{\dot{B}_q^{\frac{\al }{2}}(0,T;\dot{B}_q^{-\frac{1}{q}}(\Rn))}\leq \|v\|_{\dot{B}_q^{\frac{\al }{2}}(0,T;L^q(\R_+))}=\|v\|_{L^q(\R_+);\dot{B}_q^{\frac{\al }{2}}((0,T))},\\
\|v_n|_{x_n=0}\|_{L^q(0,T;\dot{B}_q^{-\frac{1}{q}}(\Rn))}\leq \|v\|_{L^q(0,T;L^q(\R_+))}=\|v\|_{L^q(\R_+\times(0,T))},
\end{align}
%and
\begin{align}
\label{t5}
%\|V_n|_{x_n=0}\|_{\dot B_q^{\frac{\al }{2}}(0,T;\dot{B}_q^{-\frac{1}{q}}(\Rn))}\leq \|V\|_{\dot{B}_q^{\frac{\al }{2}}(0,T;L^q(\R_+))}=\|V\|_{L^q(\R_+);\dot{B}_q^{\frac{\al }{2}}((0,T))},\\
\|V_n|_{x_n=0}\|_{L^q(0,T;\dot{B}_q^{-\frac{1}{q}}(\Rn))}\leq \|V\|_{L^q(0,T;L^q(\R_+))}=\|V\|_{L^q(\R_+\times(0,T))}.
\end{align}

4)
Let $P_{x_n}$ be the Poisson operator defined by
%The following is well known estimate for the Poisson integral operator.
%\begin{prop}
%\label{proppoisson}
%Let $\al>0$ and $1< q < \infty$.
%Let  $f\in  B^{\al-\frac{1}{q}}_q(\Rn)$.
%Define\[
\[P_{x_n}f(x)=%\left\{\begin{array}{l}
%<f, \frac{x_n}{(|x'\cdot|^2+x_n^2)^{\frac{n}{2}}}>_{\Rn}\mbox{ if }\al< \frac{1}{q}\\
c_n\int_{\Rn}\frac{x_n}{(|x'-y'|^2+x_n^2)^{\frac{n}{2}}}f(y')dy',\]
%\mbox{ if }\al\geq \frac{1}{q},
%\end{array}\right.
%\]
%where $<\cdot ,\cdot>_{\Rn}$ is duality paring between $B^{\alpha-\frac{1}{q}}_q(\R)$ and $B^{-\alpha+\frac{1}{q}}_{q'}(\R)$.
which satisfies the Laplace equation
\[
-\Delta P_{x_n}f=0\mbox{ in }\R_+,\ P_{x_n}f|_{x_n=0}=f.\]
%Let $\phi$ be a function defined by \eqref{phi}.
Observe that
\begin{align*}
D_{x_n}\phi(x,t)&=2\int_{\Rn}D_{x_n}N(x'-y',x_n)(g_n(y',t)-v_n(y',0,t)-V_n(y',0,t))dy'\\
&=P_{x_n}(g_n-v_n|_{y_n=0}-V_n|_{y_n=0}),\\
D_{x'}\phi(x,t)&=2\int_{\Rn}D_{x_n}N(x'-y',x_n)R'(g_n-v_n|_{y_n=0}-V_n|_{y_n=0})(y',t)dy'\\
&=P_{x_n}R'(g_n-v_n|_{y_n=0}-V_n|_{y_n=0}).
\end{align*}
%
%It is well known fact that $P_{x_n}$ is bounded operator from $\dot{B}^{-\frac{1}{q}}_q(\Rn)$ to $L^q(\R_+)$ (see \cite{St} for the reference).
Since Poisson operator $P_{x_n}$ is bounded from $\dot{B}^{-\frac{1}{q}}_q$ to $L^q$ (see \cite{St} for the reference) and Riesz operator $R_i'$ is $L^q$ bounded, we have that %if $0<\al<1$, then
\begin{align*}
\|\nabla \phi\|_{L^q(\R_+\times (0,T))}&\leq c\|g_n-v_n|_{x_n=0}-V_n|_{x_n=0}\|_{L^q(0,T;\dot{B}^{-\frac{1}{q}}_q(\Rn))}\\
&\quad+c\|R'\Big(g_n-v_n|_{x_n=0}-V_n|_{x_n=0}\Big)\|_{L^q(0,T;\dot{B}^{-\frac{1}{q}}_q(\Rn))}\\
%\end{align*}
%Recalling the well known fact that the Riesz operator $R'$ is bounded in  Besov spaces  $\dot B^s_q(\Rn)$, $s\in {\mathbb R}$, we have   that
%\begin{align}
%\notag \|R'\Big(g_n-v_n|_{x_n=0}-V_n|_{x_n=0}\Big)\|_{L^q(0,T;\dot{B}^{-\frac{1}{q}}_q(\Rn))}
&\leq c\|g_n-v_n|_{x_n=0}-V_n|_{x_n=0}\|_{L^q(0,T;\dot{B}^{-\frac{1}{q}}_q(\Rn))}\\
&\leq c \big( \|g_n\|_{L^q(0,T;\dot{B}^{-\frac{1}{q}}_q(\Rn))}
+\|v\|_{L^q(\R_+\times (0,T))}+\|V\|_{L^q(\R_+\times (0,T))} \big).
\end{align*}
%\[
%\leq c\|g_n\|_{L^q(0,T;\dot{B}^{\al-\frac{1}{q}}_q(\Rn))}+c\|g_n\|_{{B}_q^{\frac{\al }{2}}(0,T;\dot{B}_q^{-\frac{1}{q}}(\Rn))}
%+c\|v\|_{{B}^{\al,\frac{\al}{2}}_q(\R_+\times (0,T))}+c\|V\|_{{B}^{\al,\frac{\al}{2}}_q(\R_+\times (0,T))}.
%\]
%Hence using to the estimates \eqref{t4}, \eqref{t5}, \eqref{t6} and \eqref{t7},  %we conclude that $
%\begin{align*}
%\|\nabla \phi\|_{L^q(\R_+\times (0,T))}\leq c \big( \|g_n\|_{L^q(0,T;\dot{B}^{-\frac{1}{q}}_q(\Rn))}
%+\|v\|_{L^q(\R_+\times (0,T))}+\|V\|_{L^q(\R_+\times (0,T))} \big).
%\end{align*}

5)
% %when $g', R'g_n\in B_q^{\al -\frac1q, \frac12 \al -\frac1{2q}}(\Rn \times (0,T))$.
%
%
%
%On the other hand,
%?????????????
%Now let $\al\leq \frac{1}{q}$.
Applying the last estimates of Lemma \ref{proheat1}, %to the hypothesis
%$\tilde{h}\in \dot{B}_q^{\al-\frac{2}{q}}(\R)\cap \dot{B}_q^{-\frac{2}{q}}(\R)$,
$ v|_{x_n =0} \in B_{q}^{ -\frac1q,  -\frac1{2q}}(\Rn \times (0,T))$ with
%$v|_{x_n =0},$ even for $0\leq \al\leq \frac{1}{q}$, with
\begin{align*}
\| v|_{x_n =0} \|_{B_{q}^{ -\frac1q,  -\frac1{2q}}(\Rn \times (0,T))}\leq c
\max\{1,T^{\frac{1}{2q}}\}\|h\|_{ {B}_q^{ -\frac2q}(\R_+)}.%,\\
%\| v|_{x_n =0} \|_{\dot B_q^{ -\frac1q,  -\frac1{2q}}(\Rn \times (0,T))}\leq c
%\|h\|_{\dot {B}_q^{-\frac2q}(\R_+)}.
%+c\|h\|_{\dot{B}_q^{ -\frac2q}(\R_+ )},
\end{align*}
Applying the last  estimates of Lemma \ref{propheat2}, $V|_{x_n=0}\in { B}^{-\frac{1}{q},-\frac{1}{2q}}_{q0}(\Rn\times (0,T))$ with  %to the hypothesis
%$\tilde{F}\in {B}_p^{\be,\frac{\be}{2}}(\R\times (0,T))%\cap \dot{B}_q^{-\frac{2}{q}}(\R)$,  %we have
%$V|_{x_n=0} \in B_q^{\al -\frac1q, \frac12 \al -\frac1{2q}}(\Rn \times (0,T))$ even for $0\leq \al\leq \frac{1}{q}$, with
%\begin{align*}
%\|V|_{x_n=0}\|_{L^{q}(\Rn\times (0,T))}&\leq T^{\frac{\al_1-\frac{1}{q}}{2}}\|\delta_{ij}\tilde{F}_{ki}+R_iR_j\tilde{F}_{ki}\|_{L^p(\R\times {\mathbb R}_+)}\\
%&\leq  T^{\frac{\al_1-\frac{1}{q}}{2}}\|{\mathcal F}\|_{L^p(\R_+\times {\mathbb R}_+)}.%,\\
%%\|V|_{x_n=0}\|_{\dot{ B}^{-\frac{1}{q},-\frac{1}{2q}}_q(\Rn\times (0,T))}\leq cT^{
%%\frac{1}{2}-\frac{n+2}{2}(\frac{1}{p}-\frac{1}{q})}\|\mathcal F\|_{L^p(\R\times (0,T))}.
%\end{align*}
%%for $p\leq q$ with $-1+\frac{1}{p}< \al-1+(n+2)(\frac{1}{p}-\frac{1}{q})\leq 0$. % then
%Observe that $L^{q}(\Rn\times (0,T))\hookrightarrow L^q(0,T; B^{-\frac{1}{q}}_q(\R)) \hookrightarrow { B}^{-\frac{1}{q},-\frac{1}{2q}}_{q0}(\Rn\times (0,T))$.
%Hence we can conclude that
%$V|_{x_n=0}\in{ B}^{-\frac{1}{q},-\frac{1}{2q}}_{q0}(\Rn\times (0,T))$ with
\[
\|V|_{x_n=0}\|_{{ B}^{-\frac{1}{q},-\frac{1}{2q}}_{q0}(\Rn\times (0,T))}
\leq  T^{\frac{\al_1}2 }\|{\mathcal F}\|_{L^p(\R_+\times {\mathbb R}_+)}.\]
Since $R_i$ is $L^q$ bounded (see \eqref{propriesz2}),$R'(g_n-v_n|_{x_n=0}-V_n|_{x_n=0})\in  {B}^{-\frac{1}{q},-\frac{1}{2q}}_q(\Rn\times (0,T)) $ with
\begin{align*}
&\|R'(g_n-v_n|_{x_n=0}-V_n|_{x_n=0})\|_{ {B}^{-\frac{1}{q},-\frac{1}{2q}}_q(\Rn\times (0,T))}\\
&\leq\|(g_n-v_n|_{x_n=0}-V_n|_{x_n=0})\|_{ {B}^{-\frac{1}{q},-\frac{1}{2q}}_q(\Rn\times (0,T))}. %\\
%&\leq  c\|g_n\|_{L^q(0,T;\dot B^{-\frac{1}{q}}_q(\Rn))}+c\|v\|_{L^q(\R_+\times (0,T))}+c   \|V\|_{ L^q(\R_+\times (0,T))}.%,\\
%&\|R'(g_n-v_n|_{x_n=0}-V_n|_{x_n=0})\|_{ L^q(0,T;\dot{B}^{-\frac{1}{q}}_q(\Rn))}\\
%&\leq  c\|g_n\|_{L^q(0,T;\dot{B}^{-\frac{1}{q}}_q(\Rn))}+c\|v\|_{L^q(\R_+ \times (0,T))}+c   \|V\|_{ L^q(\R_+\times (0,T))}.
%%\|R'(g_n-v_n|_{x_n=0}-V_n|_{x_n=0})\|_{ L^q(0,T;\dot{B}^{-\frac{1}{q}}_q(\Rn))}
%%&\leq  c|g_n-v_n|_{x_n=0}-V_n|_{x_n=0}\|_{ L^q(0,T;\dot{B}^{-\frac{1}{q}}_q(\Rn))}.
\end{align*}
%Hence
%$R'(g_n-v_n|_{x_n=0}-V_n|_{x_n=0})\in  {B}^{\al-\frac{1}{q},\frac{\al}{2}-\frac{1}{2q}}_q(\Rn\times (0,T))$. % with
%%we conclude that
%\begin{align*}
%&\|R'(g_n-v_n|_{x_n=0}-V_n|_{x_n=0})\|_{{B}^{\al-\frac{1}{q},\frac{\al}{2}-\frac{1}{2q}}_q(\Rn\times (0,T))}\\
%&\leq \|g_n\|_{ L^q(0,T;\dot{B}^{\al-\frac{1}{q}}_q(\Rn))}+c\|v\|_{\dot B^{\al,\frac{\al}{2}}_q(\R_+\times (0,T))} +c\|V\|_{\dot B^{\al,\frac{\al}{2}}_q(\R_+\times (0,T))},0<\al< \frac{1}{q},\\
%&\|R'(g_n-v_n|_{x_n=0}-V_n|_{x_n=0})\|_{{B}^{-\frac{1}{q},-\frac{1}{2q}}_q(\Rn\times (0,T))}\\
%&\leq \|g_n\|_{ L^q(0,T;\dot{B}^{-\frac{1}{q}}_q(\Rn))}+c\|v\|_{L^q(\R_+\times (0,T))} +c\|V\|_{L^q(\R_+\times (0,T))}.
%%&\|R'(g_n-v_n|_{x_n=0}-V_n|_{x_n=0})\|_{\dot{B}^{-\frac{1}{q},-\frac{1}{2q}}_q(\Rn\times (0,T))}\\
%%&\leq \|g_n\|_{ L^q(0,T;\dot{B}^{-\frac{1}{q}}_q(\Rn))}+c\|v\|_{L^q(\R_+\times (0,T))} +c\|V\|_{L^q(\R_+\times (0,T))}.
%\end{align*}
%??????????
%and
%\[
%\|R'(g_n-v_n|_{x_n=0}-V_n|_{x_n=0})\|_{{b}^{0,0}_q(\Rn\times (0,T))}\\
%\leq \|g_n\|_{ L^q(0,T;b^{0}_q(\Rn))}+c\max\{1,T^{\frac{1}{2q}}\}\|\tilde{h}_n\|_{b^{-\frac{1}{q}}_q(\R)} +c\max\{T^{\frac{\al_1-1}{2}},T^{\frac{(\al_1-1(1-be)}{2}}\}\|{\mathcal F}\|_{L^p({\mathbb R}_+;B^\be_p(\R))},\be=\frac1{\al_1 q}.
%\]
%??????????????
In the end, we conclude that   $G'=g'-v'|_{x_n=0}-V'|_{x_n=0}-R'(g_n-v_n|_{x_n=0}-V_n|_{x_n=0})\in B^{\-\frac{1}{q},-\frac{1}{2q}}_{q}(\Rn\times (0,T))$
with
\begin{align*}
%\|G'\|_{{B}^{\al-\frac{1}{q},\frac{\al}{2}-\frac{1}{2q}}_q(\Rn\times (0,T))}
%&\leq \|g_n\|_{ L^q(0,T;\dot{B}^{\al-\frac{1}{q}}_q(\Rn))}+c\|v\|_{\dot B^{\al,\frac{\al}{2}}_q(\R_+\times (0,T))} +c\|V\|_{\dot B^{\al,\frac{\al}{2}}_q(\R_+\times (0,T))},0<\al<\frac{1}{q},\\
\|G'\|_{{B}^{-\frac{1}{q},-\frac{1}{2q}}_q(\Rn\times (0,T))}
&\leq c \big( \|g\|_{ B_{q}^{-\frac{1}{q},-\frac{1}{2q}}(\Rn\times (0,T))  }
+\max\{1,T^{\frac{1}{2q}}\}\|h\|_{ {B}_q^{ -\frac2q}(\R_+)}+T^{\frac{\al_1}2 }\|{\mathcal F}\|_{L^p(\R_+\times {\mathbb R}_+)} \big).
%+ \|g_n\|_{ L^q(0,T;\dot{B}^{-\frac{1}{q}}_q(\Rn))}+\|v\|_{L^q(\R_+\times (0,T))} +\|V\|_{L^q(\R_+\times (0,T))} \big).
%,\\
%\|G'\|_{\dot{B}^{-\frac{1}{q},-\frac{1}{2q}}_q(\Rn\times (0,T))}
%&\leq \|g_n\|_{ L^q(0,T;\dot{B}^{-\frac{1}{q}}_q(\Rn))}+c\|v\|_{L^q(\R_+\times (0,T))} +c\|V\|_{L^q(\R_+\times (0,T))}
\end{align*}

Recall that if $q>3$, then $B_{q}^{-\frac{1}{q},-\frac{1}{2q}}(\Rn\times (0,T)))=B_{q0}^{-\frac{1}{q},-\frac{1}{2q}}(\Rn\times (0,T)))$. Hence we conclude that $G\in B_{q0}^{-\frac{1}{q},-\frac{1}{2q}}(\Rn\times (0,T))$.

If $1<q\leq 3$, then from the fact that $V|_{x_n=0}\in B^{-\frac{1}{q},-\frac{1}{2q}}_{q0}(\Rn\times (0,T))$ and from the hypothesis $g-v|_{x_n=0}=g-\Gamma_t*_x\tilde{h}|_{x_n=0}\in B^{-\frac{1}{q},-\frac{1}{2q}}_{q0}(\Rn\times (0,T))$,  we still conclude that $G\in B_{q0}^{-\frac{1}{q},-\frac{1}{2q}}(\Rn\times (0,T))$ with some modification that
\begin{align*}
%\|G'\|_{{B}^{\al-\frac{1}{q},\frac{\al}{2}-\frac{1}{2q}}_q(\Rn\times (0,T))}
%&\leq \|g_n\|_{ L^q(0,T;\dot{B}^{\al-\frac{1}{q}}_q(\Rn))}+c\|v\|_{\dot B^{\al,\frac{\al}{2}}_q(\R_+\times (0,T))} +c\|V\|_{\dot B^{\al,\frac{\al}{2}}_q(\R_+\times (0,T))},0<\al<\frac{1}{q},\\
\|G'\|_{{B}^{-\frac{1}{q},-\frac{1}{2q}}_{q0}(\Rn\times (0,T))}
&\leq c \big( \|g- \Gamma_t*_x\tilde{h}|_{x_n=0} \|_{B_{q0}^{-\frac{1}{q},-\frac{1}{2q}}(\Rn\times (0,T))}%\\
%\quad&+\|g_n\|_{ L^q(0,T;\dot{B}^{-\frac{1}{q}}_q(\Rn))}+\|v\|_{L^q(\R_+\times (0,T))} +\|V\|_{L^q(\R_+\times (0,T))}
+T^{\frac{\al_1}2 }\|{\mathcal F}\|_{L^p(\R_+\times {\mathbb R}_+)}\big).
%,\\
%\|G'\|_{\dot{B}^{-\frac{1}{q},-\frac{1}{2q}}_q(\Rn\times (0,T))}
%&\leq \|g_n\|_{ L^q(0,T;\dot{B}^{-\frac{1}{q}}_q(\Rn))}+c\|v\|_{L^q(\R_+\times (0,T))} +c\|V\|_{L^q(\R_+\times (0,T))}
\end{align*}

6)
Finally, applying Theorem \ref{Rn-1} to the fact that $G=(G',0)\in B_{q0}^{-\frac{1}{q},-\frac{1}{2q}}(\Rn\times (0,T))$, we conclude that $w\in L^q(\R_+\times (0,T))$
with
\begin{align*}
\| w\|_{ L^q({\mathbb R}^n_+\times (0,T))}
 &\leq c\max\{1,T^{\frac{1}{2q}}\}\|G\|_{ B^{-\frac{1}{q},-\frac{1}{2q}}_{q0}({\mathbb
R}^{n-1} \times (0,T))}.
 \end{align*}

%
% Let $u=w+\nabla \phi+v+V$, $p=q-\phi_t$.
% Then $w, q$ satisfies the Stokes equations \eqref{maineq-stokes}.
%
%According to Proposition \ref{Rn-1} which was shown in \cite{CC},  we have
%\begin{align*}
%\| w\|_{ \dot{B}^{\al,\frac\al 2 }_{q}({\mathbb R}^n_+\times {\mathbb R}_+)}
% \leq c\|G'\|_{\dot B^{\al-\frac{1}{q},\frac\al 2-\frac{1}{2q}}_{q}({\mathbb
%R}^{n-1} \times {\mathbb R}_+)}

% \end{align*}
% \[
% \leq c \|g\|_{\dot B^{\al-\frac{1}{q},\frac\al 2-\frac{1}{2q}}_{q}({\mathbb
%R}^{n-1} \times {\mathbb R}_+)}+c\|v\|_{\dot B^{\al,\frac\al 2}_{q}({\mathbb
%R}^{n}_+ \times {\mathbb R}_+)}+c\|V\|_{\dot B^{\al,\frac\al 2}_{q}({\mathbb
%R}^{n}_+ \times {\mathbb R}_+)}.
% \]
%\begin{align}
%\label{m3}
%\int_0^T \int_{{\mathbb R}^n_+ }(x_n\wedge t^{\frac{1}{2}})^{-\al q+( k_0 +l_0 + 2m_0 )q}
%                        |D_{x_n}^{l_0} D_{x'}^{k_0} D_t^{m_0}  u(x,t)|^qdxdt
%\leq c(\alpha,q) \| g\|^q_{\dot{B}^{\al-\frac{1}{q}, \frac\al2-\frac{1}{2q} }_q(\Rn \times (0,T) )}
%\end{align}
% for any $k_0,l_0,m_0$ with  $ k_0 + l_0 + 2m_0 \geq 1$.\\
%$\bullet$
%Moreover if $q>\frac{n+2}{n+1}$, then
%\begin{align}\label{1106}
%\|  w\|_{L^q(\R_+\times {\mathbb R})}
%\leq  \| G' \|_{B^{\al-\frac{1}{q},\frac{\al}{2}-\frac{1}{2q}}_q({\mathbb R}^{n-1}\times {\mathbb R}_+)}
%\end{align}
%\[
%\leq c \|g\|_{\dot B^{\al-\frac{1}{q},\frac\al 2-\frac{1}{2q}}_{q}({\mathbb
%R}^{n-1} \times {\mathbb R}_+)}+c\|v\|_{\dot B^{\al,\frac\al 2}_{q}({\mathbb
%R}^{n}_+ \times {\mathbb R}_+)}+c\|V\|_{\dot B^{\al,\frac\al 2}_{q}({\mathbb
%R}^{n}_+ \times {\mathbb R}_+)}.\]
This completes the proof of the estimate of the solution in Theorem \ref{thm-stokes}.

%with
%\[
%\|\nabla \phi\|_{B^{\alpha,\frac{\alpha}{2}}_q(\R_+\times (0,T))}\leq c\|g_n-v_n|_{x_n=0}\|_{L^q(0,T;B^{\alpha-\frac{2}{q}}_q(\Rn)))}+c\|g_n-v_n|_{x_n=0}\|_{{B}_q^{\frac{\alpha}{2}}(0,T;\dot{B}^{-\frac{1}{q}}_q(\Rn)}.\]
%\[\leq c\|h\|_{B^{\alpha-\frac{2}{q}}_q(\R_+)}+c\|f\|_{B^{\alpha-2-\frac{1}{q},\frac{\alpha}{2}-1-\frac{1}{2q}}_q(\R_+\times (0,T))}+c\|g_n\|_{B^{\alpha,\frac{\alpha}{2}}_\infty(\Rn\times (0,T))}+?
%\]
%For the completeness, we give its proof in Appendix \ref{appen2}.

\subsection{Regularity}
\setcounter{equation}{0}
\label{regularity-stokes}

Using the estimate of the heat kernel $\Gamma_t$, direct computation of $v=\Gamma_t*\tilde{h}$ leads to the estimate that
\[
\|\nabla  v\|_{L^q(\R)}\leq ct^{-\frac{1}{2}}\|h\|_{L^q(\R_+)}\]
and
\[
\|\nabla  v\|_{L^q(\R)}\leq ct^{-\frac{1}{2}-\frac{1}{q}}\|h\|_{\dot{B}^{-\frac{2}{q}}_q(\R_+)},\]
According to Lemma \ref{propheat2}, $ V_j(x,t)=-\int^t_{0}\int_{\R}D_{y_k}\Gamma(x-y,t-s)\Big(\delta_{ij}\tilde{F}_{ki}+R_iR_j\tilde{F}_{ki}\Big)(y,s)dyds\in W^{1,\frac{1}{2}}_p(\R\times (0,T))$.
Using the estimate of the Poisson kernel $D_{x_n}N(x'-y',x_n)$, direct computation of $\phi=\int_{\Rn}N(x'-y',x_n)\big(g_n(y',t)-v_n(y',0,t)-V_n(y',0,t)\big)dy'$ leads to the estimate that
\[
\|\nabla^2 \phi\|_{L^q(\Rn\times (0,T))}\leq cx_n^{-1}\|g_n-v_n|_{x_n=0}-V_n|_{x_n=0}\|_{L^q(\Rn\times (0,T))}\]
and
\[
\|\nabla^2 \phi\|_{L^q(\Rn\times (0,T))}\leq cx_n^{-1-\frac{1}{q}}\|g_n-v_n|_{x_n=0}-V_n|_{x_n=0}\|_{L^q(0,T;\dot{B}^{-\frac{1}{q}}(\Rn))}.\]
Finally, recall Remark \ref{regularity-zero} that
\[
\|D_xw(\cdot, x_n,t)\|_{L^q(\Rn)}\leq ct^{\frac{1}{2}}x_n^{-2}\|G\|_{L^q(\Rn \times (0,T))}.
\]
and
\[
\|D_xw(\cdot, x_n,t)\|_{L^q(\Rn)}\leq ct^{\frac{1}{2}}x_n^{-2-\frac{1}{q}}\|G\|_{\dot{B}^{-\frac{1}{q},-\frac{1}{2q}}_{q0}(\Rn \times (0,T))},
\]
where $G=(G',0), G'=g'-v'|_{x_n=0}-V'|_{x_n=0}-R'(g_n-v_n|_{x_n=0}-V_n|_{x_n=0})$.
Therefore, we conclude that $\nabla u=\nabla (v+V+\nabla \phi+w)\in L^p_{loc}(\R_+\times (0,T)).$

\subsection{Uniqueness}

Suppose that  $(u_1,p_1)$ and $(u_2,p_2)$ are   very weak
 solutions of  the Stokes equations \eqref{maineq-stokes} in the class $ L^q({\mathbb R}^n_+\times (0,T))$,
 then $u_1-u_2$ satisfies the variational formulation
\[
\int^T_0\int_{{\mathbb R}^n_+}(u_1-u_2)\cdot(-\phi_t-\Delta \phi+\nabla
\pi)dxdt=0\] for any $\phi\in C^\infty_{0,\sigma}({\mathbb
R}^n_+\times [0,T))= \{\phi \in C_0^\infty ({\mathbb R}^n_+ \times
(0,T)) \, | \, div \, \phi(\cdot, t) =0 \quad \mbox{for all} \quad t
\in (0,T) \} $. Since $\{-\phi_t-\Delta \phi+\nabla \pi:\ \phi\in
C^\infty_{0,\sigma}({\mathbb R}^n_+\times [0,T))\} $ is dense in
$L^{\frac{q}{q-1}}({\mathbb R}^n_+\times [0,T))$, we conclude that
$u_1-u_2=0$ a.e. in ${\mathbb R}^n_+\times (0,T)$. Therefore, the
uniqueness of the solution of the Stokes system \eqref{maineq-stokes} holds
in the  class $ L^q({\mathbb R}^n_+\times (0,T))$.

\section{Nonlinear problem}

\label{nonlinear}
\setcounter{equation}{0}

In this section we would like to give a proof of Theorem \ref{thm3}. For the purpose of it, we  construct approximate solutions and then
derive uniform convergence in  $L^q(\R_+\times (0,T))$. For the uniform estimates,
 bilinear estimates should be preceded.

Choose $p=\frac{q}{2}$. Then $\al_1=1-(n+2)(\frac{1}{p}-\frac{1}{q})>0$  for any $q>n+2$ and
\begin{equation}
\label{bilinear1}
\|(u\otimes v)\|_{ L^p(\R\times (0,T))}
\leq c\|u\|_{L^q(\R\times (0,T))}\|v\|_{L^q (\R\times (0,T))}\end{equation}
for any $u,v\in L^q(\R\times (0,T)).$

\subsection{\bf Proof of Theorem \ref{thm3}}

In this section we would like to construct a solution of the Navier-Stokes equations \eqref{maineq2}.

\subsubsection{Approximating solution}

%From the first step, we  decompose  $\mbox{div}(u\otimes u)$  by  $\mbox{div}(u\otimes u)={\mathbb P}\mbox{div}(u\otimes u)+\nabla {\mathbb Q}\mbox{div}(u\otimes u)$.

Let $(u^1,p^1)$ be the solution of the equations
\begin{align}
\begin{array}{l}\vspace{2mm}
u^1_t - \De u^1 + \na p^1 =0, \qquad div \, u^1 =0, \mbox{ in }
 \R_+\times (0,T),\\
\hspace{30mm}u^1|_{t=0}= h, \qquad  u^1|_{x_n =0} = g.
\end{array}
\end{align}
Let $m\geq 1$.
After obtaining $(u^1,p^1),\cdots, (u^m,p^m)$ construct $(u^{m+1}, p^{m+1})$ which satisfies the equations
\begin{align}
\label{maineq5}
\begin{array}{l}\vspace{2mm}
u^{m+1}_t - \De u^{m+1} + \na p^{m+1} =f^m, \qquad div \, u^{m+1} =0, \mbox{ in }
 \R_+\times (0,T),\\
\hspace{30mm}u^{m+1}|_{t=0}= h, \qquad  u^{m+1}|_{x_n =0} = g,
\end{array}
\end{align}
where $f^m=-\mbox{div}(u^m\otimes u^m)$.
%Here we observe that $\mbox{div}(u\otimes u)$ can be decomposed by the form $\mbox{div}(u\otimes u)={\mathbb P}\mbox{div}(u\otimes u)+\nabla p_0$.

\subsubsection{Uniform boundedness}
Let $q>n+2$.
By the result of Theorem \ref{thm-stokes}, we have

\begin{align}
\label{uc1-1}
\notag\| u^{1}\|_{L^{q}({\mathbb R}^n_+\times (0,T))}
& \leq c_1 \big( \max\{1,T^{\frac{1}{q}}\}\|h\|_{ B^{-\frac{2}{q}}_{q}({\mathbb
R}^{n}_+)}
+\max\{1,T^{\frac{1}{2q}}\}\|g\|_{  B^{-\frac{1}{q},-\frac{1}{2q}}_{q}({\mathbb
R}^{n-1} \times (0,T))}\\
&\quad  +\|g_n\|_{ L^q(0,T;\dot{B}^{-\frac{1}{q}}_q(\Rn))} \big),
\end{align}
%Here $\be=\al-1+(n+2)(\frac{1}{p}-\frac{1}{q}), p<q$.
%Choose $p_1=\frac{q}{2}$.
According to    bilinear estimate \ref{bilinear1}, choosing $p=\frac{q}{2}$, we have
\[
\|(u^m\otimes u^m)\|_{L^p(\R\times (0,T))}
\leq c
\|u^m\|_{L^q(\R\times (0,T))}^2.
\]
Hence, we have
\begin{align}
\notag\| u^{m+1}\|_{L^{q}({\mathbb R}^n_+\times (0,T))}
 &\leq
  c_1 \big( \max\{1,T^{\frac{1}{q}}\}\|h\|_{ B^{-\frac{2}{q}}_{q}({\mathbb
R}^{n}_+)}
+\max\{1,T^{\frac{1}{2q}}\}\|g\|_{ B^{-\frac{1}{q},-\frac{1}{2q}}_{q}({\mathbb
R}^{n-1} \times (0,T))}\\
&\quad +\|g_n\|_{ L^q(0,T;\dot{B}^{-\frac{1}{q}}_q(\Rn))}
\label{ucl-2}+
T^{\frac{1}{2}-\frac{n+2}{2q}}\|u^m\|_{ L^{q}({\mathbb
R}^{n}_+ \times (0,T))}^2 \big).
\end{align}

Set
 \begin{align*}
M_0&=\|h\|_{  B^{-\frac{2}{q}}_{q}({\mathbb
R}^{n}_+)}+
\|g\|_{  B^{-\frac{1}{q},-\frac{1}{2q}}_{q}({\mathbb
R}^{n-1} \times (0,T))}
 +\|g_n\|_{ L^q(0,T;\dot{B}^{-\frac{1}{q}}_q(\Rn))}.\end{align*}

Choose $M>2c_1M_0$. Then \eqref{uc1-1} leads to the estimate
\[
\|u^1\|_{L^q(\R_+\times (0,T))}\leq c_1M_0<M\mbox{ for }T\leq 1.\]
Under the condition that $\|u^m\|_{L^q(\R_+\times (0,T))}\leq M$,  \eqref{ucl-2} leads to the estimate
\[
\|u^{m+1}\|_{L^q(\R_+\times (0,T))}\leq c_1M_0+
c_1T^{\frac{1}{2}}
 M^2\mbox{ for }T\leq 1.
\]
Choose $0<T\leq \frac{1}{(2c_1M)^2}$, together with the condition $T\leq 1$. Then by the mathematical induction argument we can conclude that
\[
\|u^{m}\|_{L^q(\R_+\times (0,T))}\leq M\mbox{ for all }m=1,2\cdots.
\]

\subsubsection{Uniform convergence}

Let $U^m=u^{m+1}-u^m$ and $P^m=p^{m+1}-p^m$.
Then $U^m$ satisfies the equations
\[
\begin{array}{l}\vspace{2mm}
U^m_t - \De U^m + \na P^m =-\mbox{div}(u^m\otimes U^{m-1}+U^{m-1}\otimes u^{m-1}), \qquad div \, U^{m} =0, \mbox{ in }
 \R_+\times (0,T),\\
\hspace{30mm}U^{m}|_{t=0}= 0, \qquad  U^{m}|_{x_n =0} =0,
\end{array}
\]

By the result of Theorem \ref{thm-stokes},   we have
\begin{align*}
\|U^m\|_{L^q({\mathbb R}^n_+\times (0,T))}
&\leq c_2T^{\frac{1}{2}-\frac{n+2}{2q} }
\|(u^m\otimes U^{m-1}+U^{m-1}\otimes u^{m-1})\|_{L^p(\R_+\times (0,T))}
%\| u^{m+1}\|_{L^{\infty}({\mathbb R}^n_+\times (0,T))}
\\
&\leq c_2T^{\frac{1}{2}-\frac{n+2}{2q} } (\|u^m\|_{L^q(\R_+\times (0,T))}+\|u^{m-1}\|_{L^q(\R_+\times (0,T))})\|U^{m-1}\|_{L^q(\R_+\times (0,T))}.
\end{align*}

Choose $0<T\leq \frac{1}{2c_2M}  $ together with the condition $T \leq    \frac{1}{(2c_1M)^2}$ and $T\leq 1$. Then, the above estimate leads to the
\begin{equation}
\label{m2-1}
\|U^m\|_{L^q({\mathbb R}^n_+\times (0,T))}\leq \frac{1}{2}\|U^{m-1}\|_{L^q(\R_+\times (0,T))}.
\end{equation}
\eqref{m2-1} implies that the infinite series  $\sum_{k=1}^\infty U^k$ converges in  $L^q(\R_+\times (0,T))$.
Again it means that   $u^m=u^1+\sum_{k=1}^mU^{k}$ converges to $u^1+\sum_{k=1}^\infty U^{k}$ in  $L^q(\R_+\times (0,T))$.
Set $u:=u^1+\sum_{k=1}^\infty U^{k}.$

\subsection{Existence and regularity}

Let $u$ be the same one constructed by the previous section.
In this section, we will show that $u$ satisfies weak formulation of Navier-Stokes equations, that is, $u$ is a weak solution of Navier-Stokes equations with appropriate distribution $p$.

Let $\Phi\in C^\infty_{0}(\overline{\R}_+\times (0,T))$ with $\mbox{div }\Phi=0$ and $\Phi|_{x_n=0}=0$.
Observe that
\[
-\int^T_0\int_{\R_+} u^{m+1}\cdot \Delta\Phi dxdt=\int^T_0\int_{\R_+}u^{m+1}\cdot \Phi_t+(u^m\otimes u^m): \nabla \Phi dxdt+<h,\Phi(\cdot,0)>_{\R_+}-<g,\frac{\partial\Phi}{\partial x_n}>_{\Rn\times {\mathbb R}_+}.\]
Now send $m$ to the infinity, then, since $u^m\rightarrow u$ in $L^q(\R_+\times (0,T))$, we have
\begin{equation}
\label{w1}
-\int^T_0\int_{\R_+}u\cdot \Delta \Phi dxdt=\int^T_0\int_{\R_+}u\cdot \Phi_t+(u\otimes u): \nabla\Phi dxdt+<h,\Phi(\cdot,0)>_{\R_+}-<g,\frac{\partial\Phi}{\partial x_n}>_{\Rn\times {\mathbb R}_+}.\end{equation}
%Therefore,  $u$ satisfies  the  Navier-Stokes equations \eqref{maineq2} in the sense of distributions.

Since $u$ satisfies the Stokes equations \eqref{maineq-stokes} with  $f=-\mbox{div}{\mathcal F}=-\mbox{div}(u\otimes u)$, $u$ can be decomposed by $u=v+V+\nabla \phi+w$, where
\begin{align*}
v(x,t)&=\int_{\R}\Gamma(x-y,t)\tilde{h}(y)dy,\\
V_j(x,t)&=\int^t_{0}\int_{\R}D_{x_k}\Gamma(x-y,t-s)[\delta_{ij}\tilde{u}_{k}\tilde{u}_i+R_iR_j\tilde{u}_{k}\tilde{u}_{i}](y,s)dyds,\\
\phi(x,t)&=2\int_{\Rn}N(x'-y',x_n)\Big(g_n(y',t)-v_n(y',0,t)-V_n(y',0,t)\Big)dy'\\
 w_i&= \sum_{j=1}^{n-1}\int_0^t \int_{\Rn} K_{ij}( x'-y',x_n,t-s)G_j(y',s) dy'ds,
  \end{align*}
  for $G=(g' -V'|_{x_n=0} -v'|_{x_n=0} - R'(g_n -v_n|_{x_n=0}-V_n|_{x_n=0},0).$
Observe that $\delta_{ij}\tilde{u}_{k}\tilde{u}_i+R_iR_j\tilde{u}_{k}\tilde{u}_{i}\in L^{\frac{q}{2}}(\R_+\times (0,T))$. According to Lemma \ref{propheat2}, $ V_j(x,t)%=-\int^t_{0}\int_{\R}D_{y_k}\Gamma(x-y,t-s)\Big(\delta_{ij}\tilde{F}_{ki}+R_iR_j\tilde{F}_{ki}\Big)(y,s)dyds
\in W^{1,\frac{1}{2}}_{\frac{q}{2}}(\R\times (0,T))$.
On the other hand, by the same  argument in section \ref{regularity-stokes},
we have
\[
\|\nabla  v\|_{L^q(\R)}\leq ct^{-\frac{1}{2}}\|h\|_{L^q(\R_+)}\]
\[
\|\nabla  v\|_{L^q(\R)}\leq ct^{-\frac{1}{2}-\frac{1}{q}}\|h\|_{\dot{B}^{-\frac{2}{q}}_q(\R_+)},\]
\[
\|\nabla^2 \phi\|_{L^q(\Rn\times (0,T))}\leq cx_n^{-1}\|g_n-v_n|_{x_n=0}-V_n|_{x_n=0}\|_{L^q(\Rn\times (0,T))}\]
\[
\|\nabla^2 \phi\|_{L^q(\Rn\times (0,T))}\leq cx_n^{-1-\frac{1}{q}}\|g_n-v_n|_{x_n=0}-V_n|_{x_n=0}\|_{L^q(0,T;\dot{B}^{-\frac{1}{q}}(\Rn))},\]
\[
\|D_xw(\cdot, x_n,t)\|_{L^q(\Rn)}\leq ct^{\frac{1}{2}}x_n^{-2}\|G\|_{L^q(\Rn \times (0,T))},
\]
\[
\|D_xw(\cdot, x_n,t)\|_{L^q(\Rn)}\leq ct^{\frac{1}{2}}x_n^{-2-\frac{1}{q}}\|G\|_{\dot{B}^{-\frac{1}{q},-\frac{1}{2q}}_q(\Rn \times (0,T))}.
\]
and
\[
\|D_xw(\cdot, x_n,t)\|_{L^q(\Rn)}\leq ct^{\frac{1}{2}}x_n^{-2}\|G\|_{L^q(\Rn \times (0,T))}.
\]
%and
%\[
%\|D_xw(\cdot, x_n,t)\|_{L^q(\Rn)}\leq ct^{\frac{1}{2}}x_n^{-2-\frac{1}{q}}\|G\|_{\dot{B}^{-\frac{1}{q},-\frac{1}{2q}}_q(\Rn \times (0,T))}.
%\]
Therefore, we conclude that $\nabla u=\nabla (v+V+\nabla \phi+w)\in L^{\frac{q}{2}}_{loc}(\R_+\times (0,T)).$
%
%Recalling Serrin's regularity criterion in \cite{serrin}, %that
%%if $u\in L^{q}((t_0-r^2,t_0);L^p(B_{r_0}))$, $\frac{n}{p}+\frac{2}{q}<1$, then $u$ is infinitely differentiable in terms of the space variables in $(t_0-\frac{1}{4}r^2,t_0)\times B_{\frac{1}{2}r_0}$. Therefore,
% we conclude that
%$u$ is infinitely differentiable in terms of the space variables in $\R_+\times (0,T)$.
%
%Note that $u$ can be decomposed by $u=v+V+\nabla \phi+w$, where
%\begin{align*}
%v(x,t)&=\int_{\R}\Gamma(x-y,t)\tilde{h}(y)dy,\\
%V_j(x,t)&=\int^t_{0}\int_{\R}D_{x_k}\Gamma(x-y,t-s)[\delta_{ij}\tilde{u}_{k}\tilde{u}_i+R_iR_j\tilde{u}_{k}\tilde{u}_{i}](y,s)dyds,\\
%\phi(x,t)&=2\int_{\Rn}N(x'-y',x_n)\Big(g_n(y',t)-v_n(y',0,t)-V_n(y',0,t)\Big)dy'\\
% w&= \sum_{j=1}^{n-1}\int_0^t \int_{\Rn} K_{ij}( x'-y',x_n,t-s)G_j(y',s) dy'ds,
%  \end{align*}
%  for $G=(g' -V'|_{x_n=0} -v'|_{x_n=0} - R'(g_n -v_n|_{x_n=0}-V_n|_{x_n=0},0).$
% By Lemma \ref{proheat1},
% $v$ is infinitely differentiable in $(x,t)\in \R_+\times (0,T)$,  by Lemma \ref{propheat2} $V\in B_q^{\al+1,\frac{\al+1}{2}}(\R_+\times (0,T)),$
% by Proposition \ref{proppoisson2} $\nabla \phi(t)$ is infinitely differentiable in $x \in \R_+$ for each $t>0$, and by Remark  \ref{rem1031}, $w(t)$  is infinitely differentiable in $x\in \R_+$ for each $t>0$, concluding that
% $\nabla u\in L^q(K  \times (\delta, T))$  for each $\delta > 0$ and for each compact subset $K$ of $\R_+$.
 This leads to the conclusion that $u$  is a weak solution of the  Navier-Stokes equations \eqref{maineq2}.

\subsection{Uniqueness}

Let  $ v\in L^q(\R_+\times (0,T))$ be  another solution of Naiver-Stokes equations \eqref{maineq2} with pressure $q$. Then
 $u-v$ satisfies the equations
\begin{align*}
%\begin{array}{c}\vspace{2mm}
(u-v)_t - \De (u-v) + \na (p-q)& =-\mbox{div}(u\otimes (u-v)+(u-v)\otimes v)\mbox{ in }
 \R_+\times (0,T), \\
 div \, (u-v)& =0,
 %\end{array}  \mbox{ in }
 \mbox{ in }\R_+\times (0,T),\\
 (u-v)|_{t=0}= 0, &\quad (u-v)|_{x_n =0} =0.
\end{align*}
Applying  Theorem \ref{thm-stokes} to the above Stokes equations for $u-v$, then  we have
%????????????
%for $ $ we have
\begin{align*}
\| u-v\|_{L^q({\mathbb R}^n_+\times (0,T_1))}
 \leq cT_1^{\frac{1}{2}-\frac{n+2}{2q}} \|u\otimes (u-v)+(u-v)\otimes v \|_{L^p(\R_+\times (0,T_1))}\\
 \leq c_3T_1^{\frac{1}{2}-\frac{n+2}{2q}} ( \|u\|_{L^q({\mathbb R}^n_+\times (0,T_1))}+\|v\|_{L^q({\mathbb R}^n_+\times (0,T_1))})\| u-v\|_{L^q({\mathbb R}^n_+\times (0,T_1))}, \quad  T_1\leq T.
 \end{align*}
 If we take $T_1\leq \frac{1}{4c_3^2(\|u\|_{L^q({\mathbb R}^n_+\times (0,T))}+\|v\|_{L^q({\mathbb R}^n_+\times (0,T))}+1)^2}$ together with $T_1\leq 1$, then the above inequality leads to the conclusion that
 \[
 \| u-v\|_{L^q({\mathbb R}^n_+\times (0,T_1))}=0\mbox{ that is, }u\equiv v\mbox{ in }\R_+\times (0,T_1).\]
 By the same argument, we can show that
\[
 \| u-v\|_{L^q({\mathbb R}^n_+\times (T_1,2T_1))}=0\mbox{ that is, }u\equiv v\mbox{ in }\R_+\times (T_1,2T_1).\]
% By  Corollary \ref{bilinear},  %we have
%\[
%\|u-v\|_{{B}^{\al,\frac{\al}2 }_{\infty}({\mathbb R}^n_+\times (0,T))}
%%\leq c T^\frac12 \|(u\otimes (u-v)+(u-v)\otimes v)\|_{{B}^{\alpha,\frac{\al}2}_\infty(\R_+\times (0,T))}\]
%%\[
%\leq c_2T^\frac12 (\|u\|_{{B}^{\alpha,\frac{\al}2}_\infty(\R_+\times (0,T))}+\|v\|_{{B}^{\alpha,\frac{\al}2}_\infty(\R_+\times (0,T))})\|u-v\|_{{B}^{\alpha,\frac{\al}2}_\infty(\R_+\times (0,T))} .
%\]
After iterating this procedure  finitely many times, we obtain  the conclusion that $u=v$ in $\R_+\times (0,T)$.
%????????????

\appendix
\setcounter{equation}{0}

\section{Proof of Lemma \ref{lemma0115}}
%\setcounter{equation}{0}
%{\color{red}{

The following is well known estimates(see  section 4.3 in \cite{lady?} for the reference):
%\begin{prop}
%\label{hear-known}
%Let   $1<q<\infty$.% and let $k$ be a nonnegative integer.
%Then
%$T_0:\dot{B}_q^{2-\frac{2}{q}}(\R)\rightarrow\dot{W}^{2,1}_q(\R \times (0, \infty))$, $T_1:L^q(\R\times {\mathbb R})\rightarrow\dot{W}^{2,1}_q(\Rn \times {\mathbb R})$, $T_2:\dot{B}_q^{1-\frac{1}{q},\frac{1}{2}-\frac{1}{2q}}(\Rn \times {\mathbb R})\rightarrow\dot{W}^{2,1}_q(\Rn \times {\mathbb R})$ with
%%for $k\in {\mathbb N}\cup \{0\}$ and $1<q<\infty$:
\begin{align}
\label{T_11}\|T_1f\|_{\dot{W}^{2,1}_q(\R \times {\mathbb R})}&\leq c\|f\|_{L^q(\R\times {\mathbb R})}.
\end{align}
%\end{prop}
Observe that $T_1^*$ is adjoint operator of $T_1$, since
\begin{align*}
\int_{{\mathbb R}} \int_{\R} T_1\psi(x,t)  dxdt =  \int_{{\mathbb R}} \int_{\R} T_1^* \phi(y,s) \psi(y, s) dyds.
\end{align*}
%This implies that $T_1^*$ is adjoint operator of $T_1$.
%\begin{proof}
%Take Fourier transform of $Tf$ in terms of $x,t$, then
%$
%\hat{Tf}(\xi, \eta)=\frac{1}{|\xi|^2+i \eta}
%\hat{f}(\xi,\eta)$. Hence
%\[
%\widehat{D_{x_k}D_{x_l}Tf}=\frac{-\xi_l\xi_k}{|\xi|^2+i \eta}
%\hat{f}(\xi,\eta), \qquad
%\widehat{D_tT f}=\frac{-i \eta}{|\xi|^2+i \eta}
%\hat{f}(\xi,\eta).\]
%Observe that the multiplier $m_1=\frac{-\xi_n\xi_k}{|\xi|^2+i \eta}, m_2=\frac{-i\eta}{|\xi|^2+i \eta}$ are $L^p$-multipliers in $\R\times {\mathbb R}$ for $1<p<\infty$ (see Theorem 4.6 in \cite{St}).
%Therefore by the well known  singular integral operator theory we have
%\begin{align}
%\label{T1}
%\|Tf\|_{\dot W^{2,1}_{p}(\R \times {\mathbb R})}    \leq c \|f\|_{L^{p}(\R \times {\mathbb R}) }.
%\end{align}
%
Observe that $D^2_yT_1^*f, \ D_s T_1^*f$ have $L^p$ Fourier multipliers since the Fourier transform of $T_1^*f$  is $
\widehat{T^*_1 f}=\frac{1}{|\xi|^2-i \eta}
\hat{f}(\xi,\eta)$. %Applying the same argument as for   $Tf$, we have
By the %same argument as for the estimate of $T_1f$ in \cite{lady?},
well known theory for the  multiplier (see \cite{St}) we have
\begin{align}
\label{T^*1}
\|T_1^*f\|_{\dot W^{2,1}_{p}(\R \times {\mathbb R})}    \leq c \|f\|_{L^p(\R \times {\mathbb R})}, \quad   1<p<\infty.
\end{align}
%Since $<Tf,g>=<f,T^*g>$,
%To estimate the $L^p$-norm of $Tf$ and $T^*f$, we observe that
Since $T^*$ is the adjoint operator of $T$, \eqref{T^*1} implies that
\begin{align}
\label{T2}
\| T_1f\|_{L^p(\R \times {\mathbb R})} \leq  c\|f\|_{\dot W^{-2,-1}_p(\R\times {\mathbb R})},
\end{align}
%\begin{align*}
%\| Tf\|_{L^p(\R \times {\mathbb R})} = \sup_{\| g\|_{L^{p'}(\R \times {\mathbb R}) } =1} \int_{{\mathbb R}} <f,T*g>\leq c\sup_{\| g\|_{L^{p'}(\R \times {\mathbb R}) } =1}\|f\|_{W^{-2,-1}_p(\R\times {\mathbb R})}\|T^*g\|_{W^{-2,-1}_p(\R\times {\mathbb R})}\leq c\|f\|_{W^{-2,-1}_p(\R\times {\mathbb R})},
%\end{align*}
%since
%$
%\|T^*g\|_{\dot W^{2,1}_{p}(\R \times {\mathbb R})}    \leq c \|g\|_{L^{p}(\R \times {\mathbb R})} , \ 1<p<\infty.
%$
%
%\begin{align*}
%\int_{{\mathbb R}} \int_{\R} Tf(x,t) g(x,t) dxdt
% = \int_{{\mathbb R}} \int_{\R} T^*g(z,\eta) f(z,\eta) dzd\eta\\
% \leq \| T^* g\|_{\dot W^{2,1}_{p'}(\R \times {\mathbb R})}  \| f\|_{\dot W^{-2,-1}_{p}(\R \times {\mathbb R})} \leq \|  g\|_{L^{p'}(\R \times {\mathbb R})}  \| f\|_{\dot W^{-2,-1}_{p}(\R \times {\mathbb R})}.
%\end{align*}
%This implies that
%\begin{align*}
%\| Tf\|_{L^p(\R \times {\mathbb R})} \leq   \| f\|_{\dot W^{-2,-1}_{p}(\R \times {\mathbb R})}.
%\end{align*}
and \eqref{T_11} implies that
\begin{align}
\label{T^*2}
\| T_1^*f\|_{L^p(\R \times {\mathbb R})} \leq  c\|f\|_{\dot W^{-2,-1}_p(\R\times {\mathbb R})}.
\end{align}
%Observing $D_y^{2}T_1^*f=T_1^*(D_y^{2k}f)$ and $D_s(T_1^*f)=f-\Delta_y (T_1^*f)$, the estimate of $T_1^*f$ could be extended to any nonnegative integer $k\geq 0$.
%
%The same argument as for   $Tf$, we have
%\begin{align*}
%\| T^*f\|_{L^p(\R \times {\mathbb R})} \leq   \| f\|_{\dot W^{-2,-1}_{p}(\R \times {\mathbb R})}.
%\end{align*}
Applying real interpolation theory to \eqref{T2} and \eqref{T_11},
 %between $\dot{W}^{2k,k}_p(\R\times {\mathbb R})$ and $L^p(\R\times {\mathbb R})$,
 we  complete the proof of the estimate $T_1f$ in Lemma \ref{lemma0115} for $2>\al>0$. % for $s>0$.
 Also, applying real interpolation theory to  \eqref{T^*2} and \eqref{T^*1}, we  complete the proof of the estimate $T_1^*f$ in Lemma \ref{lemma0115}  for $2>\al>0$.

%\end{proof}

%}}

\section{Proof of Lemma \ref{lem-T}}

The following is well known estimates(see  section 4.3 in \cite{lady?} for the reference):
%\begin{prop}
%\label{hear-known}
%Let   $1<q<\infty$.% and let $k$ be a nonnegative integer.
%Then
%$T_0:\dot{B}_q^{2-\frac{2}{q}}(\R)\rightarrow\dot{W}^{2,1}_q(\R \times (0, \infty))$, $T_1:L^q(\R\times {\mathbb R})\rightarrow\dot{W}^{2,1}_q(\Rn \times {\mathbb R})$, $T_2:\dot{B}_q^{1-\frac{1}{q},\frac{1}{2}-\frac{1}{2q}}(\Rn \times {\mathbb R})\rightarrow\dot{W}^{2,1}_q(\Rn \times {\mathbb R})$ with
%%for $k\in {\mathbb N}\cup \{0\}$ and $1<q<\infty$:
\begin{align}
\label{T_21}\|T_2g\|_{\dot{W}^{2,1}_q(\R_+ \times {\mathbb R})}&\leq c\|g\|_{\dot{B}_q^{1-\frac{1}{q},\frac12-\frac{1}{2q}}(\Rn \times {\mathbb R})}.
\end{align}

%\setcounter{equation}{0}
%\begin{proof}
%The last inequality  is easy to obtained by  Young's Theorem, and we omit its proof and concentrate ourselves to derive the first two inequalities.
%$\bullet$
Firstly, let us derive the  estimate of $T_2g$.
%Using H$\ddot{\rm o}$lder's equality, we have
%\begin{align*}
%\| T_2f \|_{L^q(\R_+ \times {\mathbb R})} = \sup_{\phi\in C^\infty_0(\R_+\times {\mathbb R}) } \frac{\int_{-\infty}^\infty \int_{\R_+} T_2f(x,t)\cdot \phi(x,t) dxdt}{\| \phi \|_{L^{q'}(\R_+ \times{\mathbb R})} }.
%%,
%\end{align*}
%where $\frac1p + \frac1q =1$.
 % such that $supp \, \phi \subset \R_+ \times {\mathbb R}$.
Observe the identity
\begin{align}\label{1027-1}
\int_{-\infty}^\infty \int_{\R_+} T_2g(x,t) \phi(x,t) dxdt = <g, T_1^*\phi|_{y_n=0}>
\end{align}
holds for $\phi\in C^\infty_0(\R_+\times {\mathbb R})$,
where $ T_1^*\phi(y,s) = \int_s^\infty \int_{\R}  \Ga(x-y, t-s) \phi(x,t) dx dt$ and  $<\cdot,\cdot>$ is the duality pairing between $\dot{B}^{-1-\frac{1}{q},-\frac{1}{2}-\frac{1}{2q}}_q(\Rn\times {\mathbb R})$ and $\dot{B}^{1+\frac{1}{q},\frac{1}{2}+\frac{1}{2q}}_{q'}(\Rn\times {\mathbb R}).$ %. Note that
%$\int_s^\infty \int_{\R_+}  \Ga(x' -y', x_n, t-s) \phi(x,t) dx dt=\int_s^\infty \int_{\R}  \Ga(x' -y', x_n, t-s) \phi(x,t) dx dt$, since $\phi\in C^\infty_0(\R_+\times (0,T))$.
%Observe that $T_1^*\phi|_{y_n =0} = {\mathcal J}^*\phi(y',s)$, where $T^*$ is the same operator defined in the statement of Proposition \ref{lemma0115}, that is,
%%Define
%$T^*\phi(y,s) = \int_s^\infty \int_{\R}  \Ga(x' -y', x_n-y_n, t-s) \phi(x,t) dxdt.
%$
%?????????????????\\
%
%?????????????
%Since $\phi \in L^{q'}(\R_+ \times (0, \infty)) $,  we get
From the result of  Lemma \ref{lemma0115}
% In Appendix \ref{appen.point1} we show
we have
\begin{align}
\label{point1}
\|T_1^*\phi\|_{\dot W^{2,1}_{q'}(\R \times {\mathbb R})}    \leq c \|\phi\|_{L^{q'}(\R \times {\mathbb R}) }=\|\phi\|_{L^{q'}(\R_+ \times {\mathbb R}) }.
\end{align}
%?????????????
%Since $1 - \frac1q = \frac1p$, b
By trace theorem, we have
%we get
\begin{align}\label{1027-2}
\|T_1^*\phi|_{y_n=0}\|_{\dot B^{1+\frac1q, \frac{1}{2}+\frac1{2q}  }_{q'}(\Rn \times {\mathbb R})}
\leq c  \|T_1^*\phi\|_{\dot W^{2,1}_{q'}(\R \times {\mathbb R})}
\leq c \|\phi\|_{L^{q'}(\R_+ \times {\mathbb R}) }.
\end{align}
%Note that $  \Big(  \dot B^{1+\frac1q, \frac{1}{2}+\frac1{2q}  }_{q'}(\Rn \times {\mathbb R})\Big)' = \dot B^{-1-\frac1q, -\frac{1}{2}-\frac1{2q}  }_q(\Rn \times {\mathbb R})$.
Apply the estimate \eqref{1027-2} to \eqref{1027-1}, we have
%from \eqref{1027-1} and \eqref{1027-2},
%we have
%\begin{align*}
%\int_{-\infty}^\infty \int_{\R_+} T_2f(x,t) \phi(x,t) dxdt
%\leq \|f\|_{\dot B^{-1-\frac1q, -\frac{1}{2}-\frac1{2q}  }_q(\Rn \times {\mathbb R}) } \| {\mathcal J}^*\phi\|_{\dot B^{1+\frac1q, \frac{1}{2}+\frac1{2q}  }_{q'}(\Rn \times {\mathbb R}) }\\
%\leq c\|f\|_{\dot B^{-1-\frac1q, -\frac{1}{2}-\frac1{2q}  }_q(\Rn \times {\mathbb R}) } \|\phi\|_{L^{q'}(\R_+ \times {\mathbb R}) }.
%\end{align*}
%This leads to the  estimate
\begin{equation}
\label{mathcalT1}
\| T_2g \|_{L^q(\R_+ \times {\mathbb R})}\leq c\|g\|_{\dot B^{-1-\frac1q, -\frac{1}{2}-\frac1{2q}  }_q(\Rn \times {\mathbb R}) }.
\end{equation}
Applying real interpolation theory to \eqref{mathcalT1} and \eqref{T_21},
 %between $\dot{W}^{2k,k}_p(\R\times {\mathbb R})$ and $L^p(\R\times {\mathbb R})$,
 we  complete the proof of the estimate of $T_2g$ in  Lemma \ref{lem-T}.
%Applying the real  interpolation theorem between $\dot{W}^{2,1}_q(\R_+ \times {\mathbb R})$ and $L^q(\R_+ \times {\mathbb R})$, and between $\dot B^{1-\frac1q, \frac{1}{2}-\frac1{2q}}_q(\Rn \times {\mathbb R})$ and $\dot B^{-1-\frac1q, -\frac{1}{2}-\frac1{2q}}_q(\Rn \times {\mathbb R})$, we complete the proof of  estimate of $T_2f$ for any $0<s<2$.%Lemma \ref{lem-T} .
%Iterating the same argument to the higher derivatives, the estimate could be extended to any $s>0$.

%???????????????
%$\bullet$
Analogously, we can derive the estimate of $T_2^*g$, %Using H$\ddot{\rm o}$lder's equality, we have
%\begin{align*}
%\| T_2^*f \|_{L^q(\R_+ \times {\mathbb R})} = \sup_{\phi\in C^\infty_0(\R_\times {\mathbb R}) }\frac{ \int_{-\infty}^\infty \int_{\R} T_2^*f(y,s)\cdot \phi(y,s) dyds}{\| \phi \|_{L^{q'}(\R_+ \times{\mathbb R})}}.
%%,
%\end{align*}
%where $\frac1p + \frac1q =1$.
observing the identity
\begin{align}\label{1027-20}
\int_{-\infty}^\infty \int_{\R_+} T_2^*g(y,s) \phi(y,s) dyds = <g,  T_1\phi|_{x_n=0}>
\end{align}
holds for $\phi\in C^\infty_0(\R_+ \times {\mathbb R})$,
where $ T_1\phi(x,t) = \int_{-\infty}^t \int_{\R}  \Ga(x-y, t-s) \phi(y,s) dy ds$ and  $<\cdot,\cdot>$ is the duality pairing between $\dot{B}^{-1-\frac{1}{q},-\frac{1}{2}-\frac{1}{2q}}_q(\Rn\times {\mathbb R})$ and $\dot{B}^{1+\frac{1}{q},\frac{1}{2}+\frac{1}{2q}}_{q'}(\Rn\times {\mathbb R}).$
%Note that $ \int_{-\infty}^t \int_{\R}  \Ga(x' -y', y_n, t-s) \phi(y,s) dy ds= \int_{-\infty}^t \int_{\R_+}  \Ga(x' -y', y_n, t-s) \phi(y,s) dy ds$, since $\phi\in C^\infty_0(\R_\times {\mathbb R}).$
%Observe that $T\phi|_{x_n =0} = {\mathcal J}\phi(x',t)$, where $T$ is the same operator defined in the statement of Proposition \ref{lemma0115}%, that is,
%%Define
%$T\phi(x,t) = \int^t_{-\infty}
% \int_{\R}  \Ga(x' -y', x_n-y_n, t-s) \phi(x,t) dxdt.
%$
%By the same procedure as  for the estimate of $T_2f$, we can  obtain the estimate of $T_2^*g$ in  Lemma \ref{lem-T}. We omit the details.
By the same procedure as  for the estimate of $T_2f$(We omit the details), we can  obtain the estimate of $T_2^*g$ that
\begin{equation}
\label{mathcalT1-1}
\| T_2^*g \|_{L^q(\R_+ \times {\mathbb R})}\leq c\|g\|_{\dot B^{-1-\frac1q, -\frac{1}{2}-\frac1{2q}  }_q(\Rn \times {\mathbb R}) }.
\end{equation}
Since $D_{y_k}D_{y_l}T_2^*g=T_2^*(D_{y_k}D_{y_l}g)$ for $k,l\neq n$, and $D_sT_2^*g=T_2^*(D_sg)$ we  have
\begin{equation}
\label{mathcalT1-2}
\sum_{k\neq n}\| D_{x_k}^2T_2^*g \|_{L^q(\R_+ \times {\mathbb R})}+\|D_sT_2^*g \|_{L^q(\R_+ \times {\mathbb R})}\leq c\|g\|_{\dot B^{1-\frac1q, \frac{1}{2}-\frac1{2q}  }_q(\Rn \times {\mathbb R}) }.
\end{equation}
%Since $\mbox{div}T_2^*g=0$, $D_{x_n}=
Since $D_{x_n}^2 T_2^*g=-D_sT_2^*g-\sum_{neq n}D_{x_k}^2 T_2^*g$, we again have
\begin{equation}
\label{mathcalT1-3}
\| \Delta_y T_2^*g \|_{L^q(\R_+ \times {\mathbb R})}+\|D_sT_2^*g \|_{L^q(\R_+ \times {\mathbb R})}\leq c\|g\|_{\dot B^{1-\frac1q, \frac{1}{2}-\frac1{2q}  }_q(\Rn \times {\mathbb R}) }.
\end{equation}
By the well known elliptic theory $T_2^*g|_{y_n=0}=0$ implies that
\[
\|D_y^2T_2^*g \|_{L^q(\R_+ \times {\mathbb R})}\leq c\| \Delta_y T_2^*g \|_{L^q(\R_+ \times {\mathbb R})}.
\]
Combining all the above estiamtes we conclude that
\begin{equation}
\label{mathcalT1-4}
\| T_2^*g \|_{W^{2,1}_q(\R_+ \times {\mathbb R})}\leq c\|g\|_{\dot B^{1-\frac1q, \frac{1}{2}-\frac1{2q}  }_q(\Rn \times {\mathbb R}) }.
\end{equation}

Applying real interpolation theory to \eqref{mathcalT1-1} and \eqref{mathcalT1-4}, we complete the proof of the estimate of $T_2^*g$ in \ref{lem-T} for $0<\al<2$.

\section{Proof of Lemma \ref{proheat1} }

$\bullet$ First we would like to derive the estimate of $T_0h=\Gamma_t*h$.

Let us consider the case $h\in \dot B^{-\frac{2}{q}}_q(\R)$.
%Let.
%
%Using H$\ddot{\rm o}$lder's equality, we have
%Observe that
%\[
%\|u\|_{L^q(\R\times (0,T))}=\sup_{\phi\in C^\infty(\R\times (0,T))}\frac{\int^T_{0}\int_{\R}u(x,t)\phi(x,t)dx dt}{
%\|\phi\|_{L^{q'}(\R\times {\mathbb R}_+)}}.\]
Observe  the identity
$
\int^\infty_{0}\int_{\R}T_0h(x,t)\phi(x,t)dx dt=<h,T_1^*\phi|_{s=0}>$ holds for  $\phi\in C_0^\infty(\R\times {\mathbb R})$,%\mbox{ for }\psi\in C^\infty_0(\R\times {\mathbb R}_+),\]
where $T_1^*\phi(y,s)=\int^\infty_{s}\int_{\R}\Gamma(x-y,t-s)\phi(x,t)dxdt,$ and $<\cdot,\cdot>$ is the duality pairing between $\dot{B}^{-\frac{2}{q}}_q(\R)$ and $\dot{B}^{\frac{2}{q}}_{q'}(\R).$
%Observe that $T^*\phi|_{s=0}=u^*$, where $T^*$ is the operator defined in the statement of Proposition \ref{lemma0115}.%, that is, %Define
%$T^*\psi(y,s)=\int^\infty_{s}\int_{\R}\Gamma(x-y,t-s)\psi(x,t)dxdt.$
%Then .
From the result of Lemma \ref{lemma0115}, we have
\[
\|T_1^*\phi\|_{\dot W^{2,1}_{q'}(\R \times {\mathbb R})}
\leq c \|\phi\|_{L^{q'}(\R \times {\mathbb R}) }.\]
%{\color{blue}{
%????????????????
%Take Fourier transform in terms of $x,t$, then
%$
%\hat{T\phi}=\frac{-1}{|\xi|^2+i \eta}
%\hat{\phi}(\xi,\eta)$. Hence
%\[
%\widehat{D_{x_k}D_{x_l}T\phi}=\frac{-\xi_l\xi_k}{|\xi|^2+i \eta}
%\hat{\phi}(\xi,\eta)\]
%and
%\[
%\widehat{D_tT\phi}=\frac{-i \eta}{|\xi|^2+i \eta}
%\hat{\phi}(\xi,\eta).\]
%
%Observe that the multiplier $m_1=\frac{-\xi_n\xi_k}{|\xi|^2+i \eta}, m_2=\frac{-i\eta}{|\xi|^2+i \eta}$ are $L^q$-multipliers in $\R\times {\mathbb R}$ for $1<q<\infty$.
%Therefore by the well known  singular integral operator theory we have
%\begin{align*}
%\|T\phi\|_{\dot W^{2,1}_{q'}(\R \times (0, {\mathbb R}))}    \leq c \|\phi\|_{L^{q'}(\R \times (0, {\mathbb R})) }.
%\end{align*}
%%\end{prop}
%
%
%}}
By trace theorem this implies that
\[
\|T_1^*\phi|_{s=0}\|_{\dot{B}_{q'}^{2-\frac{2}{q'}}(\R)}\leq c\|T^*_1\phi\|_{\dot W^{2,1}_{q'}(\R \times {\mathbb R})}
\leq c \|\phi\|_{L^{q'}(\R \times (0, \infty)) }.\]
%Note that $\Big(\dot{B}_{q'}^{2-\frac{2}{q'}}(\R)\Big)'=\dot{B}_q^{-\frac{2}{q}}(\R).$
Hence, we have \[
<h,T_1^*\phi|_{s=0}>\leq c\|h\|_{\dot{B}_q^{-\frac{2}{q}}(\R)}\|T_1^*\phi\|_{\dot{B}^{2-\frac{2}{q'}}(\R)}\leq c\|h\|_{\dot{B}_q^{-\frac{2}{q}}(\R)}\|\phi\|_{L^{q'}(\R \times {\mathbb R}) } .\]
Again this leads to the conclusion %\eqref{it1}.
%
%\begin{equation}
%\label{it1}
%\|u\|_{L^q(\R\times (0,T)}\leq c\|h\|_{\dot{B}^{-\frac{2}{q}}_q(\R)}. \end{equation}
that
%Again this implies that
\begin{equation}
\label{it1}
\|T_0h\|_{L^q(\R\times {\mathbb R})}\leq c\|h\|_{\dot{B}^{-\frac{2}{q}}_q(\R)}.
\end{equation}
%First, let us consider the case $h\in \dot B^{2-\frac{2}{q}}_q(\R)$.
%Since $T_0h=\Gamma_t*h$, by Proposition \ref{hear-known} we have
%\begin{equation}
%\label{heat0-2}
%\|\Gamma_t*h\|_{\dot{W}^{2,1}_q(\R\times {\mathbb R}_+)}\leq c\|h\|_{\dot B^{2-\frac{2}{q}}_q(\R)}.
%\end{equation}
%%On the other hand, by Young's theorem we have
%%\begin{equation}
%%\label{heat0-1}
%%\|\Gamma_t*h\|_{L^q(\R\times (0,T))}\leq cT^{\frac{1}{q}}\|h\|_{L^q(\R)}.\end{equation}
%
%
%
%Now,
%Interpolating \eqref{heat0-2} and \eqref{it1},  we have
%\begin{align}
%\label{heat1-1}
%\notag \|\Gamma_t*h\|_{{\dot W}^{\al,\frac{\al}{2}}_q(\R\times {\mathbb R}_+)}\leq c\|h\|_{\dot B^{\al-\frac{2}{q}}_q(\R)}, \qquad 0 \leq \al \leq 2,\\
%\|\Gamma_t*h\|_{{\dot B}^{\al,\frac{\al}{2}}_q(\R\times {\mathbb R}_+)}\leq c\|h\|_{\dot B^{\al-\frac{2}{q}}_q(\R)}, \qquad 0 < \al < 2.
%\end{align}
On the other hand, by Young's theorem we have
\begin{equation}
\label{heat0}
\|T_0h\|_{L^q(\R\times (0,T))}\leq cT^{\frac{1}{q}}\|h\|_{L^q(\R)}.
\end{equation}
From the fact that  ${B}^{s}_q(\R)=\dot{B}^{s}_q(\R)+L^q(\R)$ for $s < 0$,  \eqref{it1} and \eqref{heat0} imply that
\begin{equation}
\label{heat1}
\|T_0h\|_{L^q(\R\times (0,T))}\leq c\max\{1,T^{\frac{1}{q}}\}\|h\|_{{B}^{-\frac{2}{q}}_q(\R)}.
\end{equation}

%Recall the known result \eqref{T_01} % implies that
%%Since $T_0h=\Gamma_t*h$, by Proposition \ref{hear-known} we have
%%\begin{equation}
%%\label{heat0-2}
%%\|T_0h\|_{\dot{W}^{2,1}_q(\R\times {\mathbb R}_+)}\leq c\|h\|_{\dot B^{2-\frac{2}{q}}_q(\R)}.
%%\end{equation}
%and recall the fact that ${B}^{s}_q(\R)=\dot{B}^{s}_q(\R)\cap L^q(\R)$ for $s > 0$. Combining \eqref{T_01} and \eqref{heat0}, we have
%\begin{equation}
%\label{heat2}
%\|T_0h\|_{{W}^{2,1}_q(\R\times (0,T))}\leq c\max\{1,T^{\frac{1}{q}}\}\|h\|_{{B}^{2-\frac{2}{q}}_q(\R)}.
%\end{equation}
%
%Interpolating \eqref{heat1} and \eqref{heat2},  we have
%\begin{align}
%\label{it3}
% \|T_0h\|_{B^{\al,\frac{\al}{2}}_q(\R\times (0,T))}\leq c\max\{1,T^{\frac{1}{q}}\}\|h\|_{\dot B^{\al-\frac{2}{q}}_q(\R)}, \qquad 0< \al< 2.
%\end{align}

$\bullet$
Now, we will derive the estimate of $T_0h|_{x_n=0}=\Gamma_t*f|_{x_n=0}$.

%If $\al>\frac{1}{q}$, then by the usual trace theorem
%$T_0h\in B^{\al,\frac{\al}{2}}_q(\R\times (0,T))$ implies that
%$T_0h|_{x_n=0}\in B^{\al-\frac{1}{q},\frac{\al}{2}-\frac{1}{2q}}_q(\Rn\times (0,T))$ with
%\begin{align}
%\label{y3}
%\|T_0h|_{x_n=0}\|_{B^{\al-\frac{1}{q},\frac{\al}{2}-\frac{1}{2q}}_q(\Rn\times (0,T))}\leq c\|T_0h\|_{B^{\al,\frac{\al}{2}}_q(\R\times (0,T))}\leq c \max\{1,T^{\frac1{q}}\} \| h\|_{B_q^{\al -\frac2 q}(\R)}, \al>\frac{1}{q}.
%\end{align}
%However, if $\al\leq \frac{1}{q}$, trace theorem  does not hold any more.

Let $h\in \dot{B}^{-\frac{2}{q}}_q(\R)$.
Observe the identity
\begin{equation}
\label{holder1}
<T_0h,\phi>=<h, T_2^*\phi|_{s=0}>,\end{equation} holds for
any  $\phi \in C^\infty_0(\Rn\times {\mathbb R})$,
where $T_2^*\phi(y,s)=\int^\infty_s\int_{\Rn}\Gamma(x'-y',y_n,t-s)\phi(x',t)dx'dt$ and $<\cdot,\cdot>$ is the duality pairing between $\dot{B}^{-\frac{2}{q}}_q(\R)$ and $\dot{B}^{\frac{2}{q}}_{q'}(\R)$ .
%Note that $\int^T_0\int_{\Rn}\Gamma(x'-y',y_n,t)\phi(x',t)dx'dt=\int^\infty_0\int_{\Rn}\Gamma(x'-y',y_n,t)\phi(x',t)dx'dt$ and $\|\phi\|_{\dot{B}^{-\al+\frac{1}{q},-\frac{1}{2}+\frac{1}{2q}}_{q'}(\Rn\times (0,T))}=\|\phi\|_{\dot{B}^{-\al+\frac{1}{q},-\frac{1}{2}+\frac{1}{2q}}_{q'}(\Rn\times {\mathbb R})}$, since $\phi\in C^\infty_0(\Rn\times (0,T))$.
%Observe that
%$u^*(y)=T_2^*\phi(y,s)\Big|_{s=0},$ where $T_2^*$ is the operator defined in the statement of Lemma \ref{lem-T}.%, that is,
%\[
%T_2^*\phi(y,s)=\int^\infty_s\int_{\Rn}\Gamma(x'-y',y_n,t)\phi(x',t)dx'dt.\]
From the result of Lemma \ref{lem-T},
$
T_2^*\phi\in \dot{W}^{2,1}_{q'}(\R_+\times {\mathbb R})$  with
\[
\|T_2^*\phi\|_{ \dot{W}^{2,1}_{q'}(\R_+\times {\mathbb R})}\leq c\|\phi\|_{\dot{B}^{\frac{1}{q},\frac{1}{2q}}_{q'}(\Rn\times {\mathbb R})}%=c\|\phi\|_{\dot{B}^{-\al+\frac{1}{q},-\frac{1}{2}+\frac{1}{2q}}_{q'}(\Rn\times (0,T))}
.\]
By Trace theorem, this implies that
$T_2^*\phi\Big|_{s=0}\in \dot{B}^{\frac{2}{q}}_{q'}(\R_+ )$ with
\[
\|T_2^*\phi\Big|_{s=0}\|_{ \dot{B}^{\frac{2}{q}}_{q'}(\R_+ )}
   %\leq c\|T_2^*\phi\|_{ \dot{B}^{-\al+2,-\frac{\al}{2}+1}_{q'}(\R\times {\mathbb R})}
   \leq c\|\phi\|_{\dot{B}^{\frac{1}{q},\frac{1}{2q}}_{q'}(\Rn\times {\mathbb R})}.\]
%This implies that
%$u^*\in \dot{B}^{-\al+\frac{2}{q}}_{q'}(\R_+ )$ with
%\[
%\|u^*\|_{ \dot{B}^{-\al+\frac{2}{q}}_{q'}(\Rn\times (0,T) )}\leq c\|\phi\|_{\dot{B}^{-\al+\frac{1}{q},-\frac{1}{2}+\frac{1}{2q}}_{q'}(\Rn\times (0,T))}.\]
Hence
\[
<h, T_2^*\phi\Big|_{s=0}>\leq c\|h\|_{\dot{B}^{-\frac{2}{q}}_q(\R)}\|\phi\|_{\dot{B}^{\frac{1}{q},\frac{1}{2q}}_{q'}(\Rn\times {\mathbb R})}\]
%Note that $\Big(\dot{B}^{-\al+\frac{1}{q},-\frac{1}{2}+\frac{1}{2q}}_{q'}(\Rn\times (0,T))\Big)'=\dot{B}^{\al-\frac{1}{q},\frac{1}{2}-\frac{1}{2q}}_{q}(\Rn\times (0,T))$.
Applying the above estimate to \eqref{holder1}, we conclude that
$T_0h|_{x_n=0}\in  \dot{B}^{\-\frac{1}{q},-\frac{1}{2q}}_{q}(\Rn\times {\mathbb R})$ with
%\[
%\|T_0h|_{x_n=0}\|_{\dot{B}^{\al-\frac{1}{q},\frac{1}{2}-\frac{1}{2q}}_{q}(\Rn\times {\mathbb R})}\leq c\|h\|_{\dot{B}^{\al-\frac{2}{q}}_q(\R)}.\]
%This again implies that
%%$T_0h|_{x_n=0}\in  \dot{B}^{\al-\frac{1}{q},\frac{1}{2}-\frac{1}{2q}}_{q}(\Rn\times (0, \infty))$ with
\begin{align}
\label{y1}
\|T_0h|_{x_n=0}\|_{\dot{B}^{-\frac{1}{q},-\frac{1}{2q}}_{q}(\Rn\times {\mathbb R})}\leq c\|h\|_{\dot{B}^{-\frac{2}{q}}_q(\R)}.\end{align}
On the other hand, by  Young's inequality we have
\begin{align}
\label{y2}
\| T_0h|_{x_n=0}\|_{L^q(\Rn \times (0, T))} \leq c T^{\frac1{2q}} \| h\|_{L^q(\R)}.
\end{align}
Recall the fact that $B^{s,\frac{s}{2}}_q(\Omega\times (0,T))=\dot B^{s,\frac{s}{2}}_q(\Omega\times (0,T))+L^q(\Omega\times (0,T))$ for $s<0$. Combining \eqref{y1} and \eqref{y2}, we have
\begin{align}
\label{y22}
\| T_0h|_{x_n=0}\|_{B^{ -\frac1q,  -\frac1{2q}}_{q} (\Rn \times (0, T))} \leq c \max\{1,T^{\frac1{2q}}\} \| h\|_{B_q^{ -\frac2 q}(\R)}.
\end{align}

\section{Proof of Lemma \ref{propheat2} }
Note that $u=D_xT_1\tilde{f}.$
%%\mbox{ if }\be\geq 0,\end{array}\right.
%\end{align*
By Lemma \ref{lemma0115}, the following estimate holds
%\begin{equation}
%\label{itr1-1}
%\|u\|_{\dot{W}^{1,\frac{1}{2}}_p(\R\times (0,T))}\leq c\|{ f}\|_{L^p(\R\times (0,T))}\mbox{ for }
%{ f}\in L^p(\R\times (0,T))
%\end{equation}
%and
\[
\|{u}\|_{\dot W^{1,\frac{1}{2}}_p(\R\times {\mathbb R})}\leq \|T_1\tilde{f}\|_{\dot W^{2,1}_p(\R\times {\mathbb R})}\leq c\|\tilde{ f}\|_{L^p(\R\times {\mathbb R}))}.
\]
%This again implies that
%\[
%\|{u}\|_{\dot{ B}^{1,\frac{1}{2}}_p(\R\times (0,T))}\leq c\|{ f}\|_{L^p(\R\times (0,T))}.
%\]
%Let $p\leq q$ and
%%Let  $\be_1=\frac{\be}{\al-\be}(1-(n+2)(\frac{1}{p}-\frac{1}{q}))$ so that $\be_1>\be$ and
% $\al_1=2-(n+2)(\frac{1}{p}-\frac{1}{q})$ (%$ \al_1 - \frac{n+2}q = 2 -\frac{n+2}p$
% Note that $\al_1>1$ if $1-(n+2)(\frac{1}{p}-\frac{1}{q})>0$). %  so that $\al_1>\al$.
%%By Sobolev-Besov embedding theorem,
%%\begin{equation}
%%\label{itra1}
%%\|{u}\|_{\dot{ B}_q^{\al_1,\frac{\al_1}{2}}(\R\times {\mathbb R})}\leq \|{u}\|_{\dot{ W}^{2,1}_p(\R\times {\mathbb R})}\leq c\|{ f}\|_{L^p(\ {\mathbb R};\dot{W}^1_p(\R)))}.
%%%This again implies that
%%%\begin{equation}
%%%\label{itr1}
%%%\|{u}\|_{\dot{ B}_q^{\al_1,\frac{\al_1}{2}}(\R\times(0,T))}\leq c\|{ f}\|_{L^p(\R\times {\mathbb R})}.
%%\end{equation}
%%%for ${ f}\in \mbox{ \r{B}}^{\be,\frac{\be}{2}}_p(\R\times (0,T]),$
On the other hand, by Young's inequality we have
\begin{equation}
\label{itra2}
\|u\|_{L^{q}(\R\times (0,T))}\leq cT^{\frac{\al_1}{2} }\|{ f}\|_{L^p(\R\times (0,T))},
\end{equation}
where $\al_1=1-(n+2)(\frac{1}{p}-\frac{1}{q_1})>0$.

 Now we will derive the estimate of $u|_{x_n=0}$ in $B^{-\frac{1}{q},-\frac{1}{2q}}_{q0}(\Rn\times (0,T))$.
%Set $\al_2(r)=2-(n+2)(\frac{1}{p}-\frac{1}{r})$. Note that $1>\al_2(r)>\al_1$ if $p<r<q$.
%Choose $q>r>p$ so that $1>\al_2(r)>\frac{1}{r}.$ Then
%Using the Besov inequality, for $\al_1 -\frac1q -\frac{n+1}q = \al_2(r)-\frac{1}{r} - \frac{n+1} r$ and $\al_2(r)- \frac{n+2} r=2-\frac{n+2}{p}$, we have
%\begin{align}
%\label{bdit1}
%\notag\|{u}|_{x_n=0}\|_{\dot { B}_{q}^{\al_1 -\frac1q,\frac{\al_1}{2} -\frac1{2q}}(\Rn\times {\mathbb R})} &\leq c \|{u}|_{x_n=0}\|_{\dot { B}_{r}^{\al_2(r)-\frac{1}{r},\frac{\al_2(r)}{2} -\frac1{2r}}(\Rn\times {\mathbb R})} \\
%&\leq c \|{u}\|_{\dot { B}_{r}^{\al_2(r),\frac{\al_2(r)}{2} }(\R\times {\mathbb R})}\leq c \|{u}\|_{\dot { W}_{p}^{2,1 }(\R\times {\mathbb R})}\leq c\|{ f}\|_{L^p({\mathbb R}_+;\dot{W}^{1}_p(\R))}.
%\end{align}
%On the other hand, b
Choose $q_1=\frac{(n+1)q}{n+2}$ so that $q_1<q$  and $(\be_1:=)1-(n+2)(\frac{1}{p}-\frac{1}{q_1})>\frac{1}{q_1}$. By Young's inequality
\begin{equation}
\label{bdit2}
\|u|_{x_n=0}\|_{L^{q_1}(\Rn \times (0,T))}\leq cT^{\frac{\be_1}{2}-\frac{1}{2q_1} }\|{ f}\|_{L^p(\R\times (0,T))}.
\end{equation}
%holds if $\be_1>\frac{1}{q_1}$.
Observe that $L^{q_1}(\Rn \times (0,T))\subset B^{-\frac{1}{q},-\frac{1}{2q}}_{q0}(\Rn\times (0,T))$ for $-\frac{n+1}{q_1} = -\frac1q -\frac{n+1}{q}$.
Note that
\begin{align*}
\frac{\be_1}{2}-\frac{1}{2q_1}
= \frac12  ( 1-(n+2)(\frac{1}{p}-\frac{1}{q_1}) ) -\frac1{2q_1}
% = \frac12  ( 1-\frac{n+2}p + \frac{n+1}{q_1} )
= \frac12  ( 1-(n+2)(\frac{1}{p}-\frac{1}{q}) )
 = \frac{\al_1}2.
\end{align*}
This completes the proof of Lemma \ref{propheat2}.

\end{document}